\documentclass[11pt, a4paper, english]{smfart}
\usepackage{smfthm}
\usepackage{amssymb}
\usepackage{tikz-cd}
\usepackage{mathtools}
\usepackage{hyperref}
\usepackage{enumitem}

\hypersetup{
	pdftitle={The Hodge and Tate Conjectures for Hyper-Kähler Sixfolds of Generalized Kummer Type},
	pdfauthor={Salvatore Floccari},
	hidelinks
}

\renewcommand{\geq}{\geqslant}

\theoremstyle{plain}
\newtheorem{theorem}{Theorem}[section]

\newtheorem{definition}[theorem]{Definition}
\newtheorem{lemma}[theorem]{Lemma}

\newtheorem{proposition}[theorem]{Proposition}

\theoremstyle{remark}
\newtheorem{remark}[theorem]{Remark}

\newcommand{\CC}{\ensuremath{\mathbb{C}}}

\newcommand{\PP}{\ensuremath{\mathbb{P}}}
\newcommand{\QQ}{\ensuremath{\mathbb{Q}}}

\newcommand{\ZZ}{\ensuremath{\mathbb{Z}}}

\DeclareMathOperator{\Aut}{Aut}

\DeclareMathOperator{\NS}{\mathrm{NS}} 
\DeclareMathOperator{\Km}{\mathrm{Km}}

\title[{The Hodge and Tate conjectures for sixfolds of generalized Kummer type}]{The Hodge and Tate conjectures for hyper-K\"ahler sixfolds of generalized Kummer type}
\author{Salvatore Floccari}
\address{Institute of Algebraic Geometry, Leibniz~University~Hannover,~Germany}
\address{\textit{Current affiliation:} Humboldt-Universit\"at zu Berlin, Germany}
\email{salvatore.floccari@hu-berlin.de}

\begin{document}
		\begin{abstract}
			We prove the conjectures of Hodge and Tate for any six-dimensional hyper-K\"ahler variety that is deformation equivalent to a generalized Kummer variety. 
		\end{abstract}
	\maketitle

	\section{Introduction}
	
	The Hodge conjecture is one of the central open problems in complex algebraic geometry. It predicts that every rational Hodge class on a smooth projective variety is a linear combination of fundamental classes of its algebraic subvarieties. We refer to \cite{deligne2006hodge} for more information. 
	Known results are discussed in the book \cite{lewis}. For instance, it has been proven for rationally connected fourfolds \cite{blochSrinivas}, some Fermat hypersurfaces \cite{Shioda1979}, and in many degrees for unitary Shimura varieties \cite{bergeron2016}.
	The Hodge conjecture has been extensively studied for abelian varieties, see \cite{vanGeemen}. It was proven in  \cite{Tan83} for all simple abelian varieties of prime dimension, but remained
	open for a long time for abelian varieties of dimension~$4$. In a major development, after this paper was written, 
	Markman \cite{Markman2025Cycles} completed the proof of the Hodge conjecture for all abelian varieties of dimension at most $5$. 
	
	A hyper-K\"ahler manifold \cite{beauville1983varietes, Huy99} is a simply connected compact K\"ahler manifold whose $H^{2,0}$ is spanned by the class of a symplectic form.
	Besides K3 surfaces, the currently known examples fall into the deformation types commonly referred to as~$\mathrm{K}3^{[n]}$, $\mathrm{Kum}^n$,~$\mathrm{OG}10$, and~$\mathrm{OG}6$. The first two series, introduced by Beauville~\cite{beauville1983varietes}, 
	provide~$2n$-dimensional examples for every~$n\geq 2$, while the other two are O'Grady's deformation types~\cite{O'G99, O'G03}, in dimension $10$ and $6$, respectively. 
	The Hodge conjecture has been proven for several explicit examples of hyper-K\"ahler varieties, for example by \cite{deCataldoMigliorini2002, dCM04, FFZ,floccari22}; however, the absence of a geometric description of general members of the known families makes the study of algebraic cycles particularly difficult. 
	Remarkable progress has been obtained by Markman \cite{MarkmanBeauvilleBogomolov, markman2019monodromy, markmanRational}, who used Verbitsky's hyperholomorphic sheaves~\cite{verbitsky1997} to deform certain algebraic cycles on hyper-K\"ahler varieties; 
	related articles are \cite{CM13, voisinfootnotes, foster}. 
	
	Our main result is the following.
	\begin{theorem}\label{thm:HCKum3}
		Let $K$ be a projective manifold of $\mathrm{Kum}^3$-type. Then the Hodge conjecture holds for $K$, i.e.,~any cohomology class in $H^{i,i}(K)\, \cap\, H^{2i}(K,\QQ)$ is a $\QQ$-linear combination of fundamental classes of subvarieties of $K$, for any $i$.
	\end{theorem}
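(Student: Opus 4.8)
The plan is to show that the whole rational Hodge structure $H^*(K,\QQ)$ is of abelian type---more precisely, that it lies in the Tannakian category of Hodge structures generated by a single abelian surface $A$---and then to realise the relevant identifications by algebraic correspondences, thereby reducing the Hodge conjecture for $K$ to the (known) Hodge conjecture for powers of $A$. The first step is to analyse $H^*(K,\QQ)$ as a module over the Looijenga--Lunts--Verbitsky Lie algebra $\mathfrak{g}(K)$. For Kummer type this cohomology is not generated by $H^2(K)$ alone: there is genuine odd cohomology, and the correct statement is that $H^*(K,\QQ)$ is generated, under the $\mathfrak{g}(K)$-action and cup product, by $H^2(K)$ together with a weight-one piece isomorphic to $H^1(A)$ for an abelian surface $A$. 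Since the Lefschetz-type operators in $\mathfrak{g}(K)$ are given by cup product with divisor classes and their (algebraic) duals, it suffices to control $H^2(K)$ and this weight-one summand Hodge-theoretically.

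Next I would identify $A$. The transcendental part $T(K)\subseteq H^2(K,\QQ)$ carries the Beauville--Bogomolov form and is a Hodge structure of K3 type; for Kummer type its Kuga--Satake variety is isogenous to a power of an abelian surface, and the odd cohomology of $K$ exhibits $H^1(A)$ directly. This shows that $H^*(K,\QQ)$ lies in the category $\langle H^1(A)\rangle$ generated by $A$, so that every Hodge class on $K$---and on its self-products---corresponds to a Hodge class on a suitable power of $A$.

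The crux is to make these identifications algebraic. In the special case $K=\mathrm{Kum}^3(A)$ the required correspondences are geometric: they come from the universal subscheme on the Hilbert scheme $A^{[4]}$, the Deninger--Murre and de Cataldo--Migliorini projectors, and the explicit relation between $K$, $A^{[4]}$ and $A$, which together realise the passage between $H^*(K)$ and $H^*(A)$ by algebraic cycles. For an arbitrary projective $K$ of $\mathrm{Kum}^3$-type---which need not be a generalised Kummer variety---I would deform these cycles using Verbitsky's hyperholomorphic sheaves and Markman's monodromy results, following the strategy of Markman and Buskin: the generating correspondences remain algebraic on every projective deformation on which the relevant class stays Hodge. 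I expect this deformation step to be the main obstacle, since one must ensure that enough algebraic correspondences deform simultaneously to generate all of $H^*(K)$, including the higher-degree and odd-degree Hodge classes, and that the hyperholomorphic deformation is compatible with the Kuga--Satake identification of the abelian surface.

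Finally, granting that algebraicity of Hodge classes on $K$ has been reduced to algebraicity of Hodge classes on the self-products $A^n$, I would conclude by invoking the Hodge conjecture for powers of an abelian surface: when the Mumford--Tate group $\MT(A)$ is as large as possible the Hodge ring is generated by divisor classes, while the exceptional Weil-type classes arising for abelian surfaces with extra endomorphisms are known to be algebraic. Tracing these algebraic classes back through the correspondences of the previous step yields the algebraicity of every Hodge class on $K$, proving Theorem~\ref{thm:HCKum3}.
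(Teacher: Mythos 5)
Your proposal has a genuine gap, and in fact two of its pillars do not hold for a general projective manifold of $\mathrm{Kum}^3$-type. First, the Hodge-theoretic claim that $H^{\bullet}(K,\QQ)$ lies in the Tannakian category generated by $H^1(A)$ for an abelian surface $A$ is false outside the locus of actual generalized Kummer varieties. For a general deformation $K$, the weight-three Hodge structure $H^3(K,\QQ)$ (of Hodge numbers $(0,4,4,0)$) corresponds, by O'Grady's work, to an abelian \emph{fourfold} of Weil type, which is generically simple and not isogenous to a power of an abelian surface; likewise the Kuga--Satake variety of the transcendental lattice $T(K)\subset H^2(K,\QQ)$ is not a power of an abelian surface for general $K$. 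There is simply no abelian surface attached to a general $K$ of $\mathrm{Kum}^3$-type, so the reduction to the Hodge conjecture for powers of an abelian surface cannot even be formulated. Second, the step you yourself flag as the main obstacle --- deforming the de Cataldo--Migliorini correspondences between $K^3(A)$ and powers of $A$ to arbitrary projective deformations via hyperholomorphic sheaves --- is not an available technique: the Markman--Buskin machinery deforms very specific cycles (Hodge isometries of $H^2$, the Weil classes on the associated fourfolds of Weil type), not the full package of correspondences needed to transport every even-degree Hodge class. Deferring this to an expectation leaves the theorem unproven precisely where it is hard.

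The paper takes a different route that avoids both problems. It splits $H^{\bullet}(K,\QQ)=(H^{\bullet}(K,\QQ))^G\oplus H^{\mathrm{odd}}(K,\QQ)\oplus 240\QQ$ using the deformation-invariant group $G\cong(\ZZ/2\ZZ)^5\subset\Aut_0(K)$. The $G$-invariant part is controlled not by an abelian surface but by a K3 surface $S_K$ canonically attached to \emph{every} projective $K$ of $\mathrm{Kum}^3$-type through the crepant resolution $Y_K\to K/G$, with $Y_K$ a moduli space of sheaves on $S_K$; the correspondence $b_*q^*$ is algebraic by construction, and the Hodge conjecture for powers of $S_K$ (hence for $Y_K$, by B\"ulles) is the input from O'Grady--Markman--Varesco. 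The remaining summand $240\QQ$ consists of canonical Hodge classes, which are shown to be spanned by fundamental classes of explicit subvarieties ($W_\tau$, $V_{\tau,\tau'}$, and pushforwards of divisors on $W_\tau$) arising from fixed loci of $G$; these subvarieties deform with $K$, so no hyperholomorphic deformation argument is needed. You would need to replace your abelian-surface reduction and the unproven deformation step with something of comparable strength to close the gap.
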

	
	This theorem provides the first complete deformation type of hyper-K\"ahler varieties of dimension at least $4$ for which the Hodge conjecture is known. As we explain below, a crucial input for the proof is our construction from \cite{floccariKum3} of a $\mathrm{K}3^{[3]}$-variety naturally associated with any variety of $\mathrm{Kum}^3$-type. 
	Some of the ideas developed here were subsequently applied together with Varesco \cite{floccariVaresco} to prove the Hodge conjecture for all fourfolds of $\mathrm{Kum}^2$-type. 
	%
	%
	%

 
	
	The Hodge conjecture is paralleled in the arithmetic setting by the Tate conjecture. Let $X$ be a smooth and projective variety over a finitely generated field~$k$, and let $\ell$ be a prime number. The absolute Galois group of~$k$ acts on the $\ell$-adic cohomology of $X_{\bar{k}}$. The strong form of the Tate conjecture predicts that this action is semisimple and that the Galois invariants in $H^{2i}_{\text{\'et}}(X_{\bar{k}}, \QQ_{\ell}(i))$ are $\QQ_{\ell}$-linear combinations of fundamental classes of $k$-subvarieties of~$X$ of codimension $i$, 
	see \cite{totaro}.
	The Tate conjecture is open already for divisors, in general. In characteristic $0$, it is known for divisors on abelian varieties~\cite{faltings}, hyper-K\"ahler varieties~\cite{Andre1996}, and many varieties with $h^{2,0}=1$ \cite{moonen2017}. 
	 
	\begin{theorem}\label{thm:TCKum3}
		Let $k\subset \CC$ be a finitely generated field, and assume that $K$ is a smooth and projective variety over $k$ whose base-change $K_{\CC}$ is a hyper-K\"ahler variety of $\mathrm{Kum}^3$-type. Then the strong Tate conjecture holds for $K$, for any prime number $\ell$. 
	\end{theorem}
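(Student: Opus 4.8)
The plan is to deduce the strong Tate conjecture from the Hodge conjecture (Theorem~\ref{thm:HCKum3}) by way of the Mumford--Tate conjecture. First I would spread $K/k$ out to a smooth projective scheme over a finitely generated $\ZZ$-algebra and, after replacing $k$ by a finite extension (which alters neither the validity of the Tate conjecture nor the semisimplicity statement), arrange that all the structures involved are defined over $k$. Fixing the embedding $k\hookrightarrow\CC$ identifies the Betti and $\ell$-adic realizations through the comparison isomorphism $H^{2i}_{\et}(K_{\bar k},\Qell(i))\cong H^{2i}(K_{\CC},\QQ(i))\otimes_{\QQ}\Qell$, so that the problem becomes one of comparing Galois invariants with Hodge classes.

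The core input, which I expect to be extracted already in the proof of Theorem~\ref{thm:HCKum3}, is that the cohomology of $K$ is an \emph{abelian motive}: there is an abelian surface $A$, defined over a finite extension of $k$, together with algebraic correspondences realizing the total cohomology of $K_{\CC}$ as a subquotient of tensor powers of $H^1(A_{\CC},\QQ)$, compatibly in all realizations. Since the de Cataldo--Migliorini-type decomposition of the cohomology of a Kummer-type variety is induced by algebraic cycles \cite{dCM04}, this identification is $\Gal(\bar k/k)$-equivariant on the $\ell$-adic side. Consequently the Mumford--Tate group $\MT(K)$ and the $\ell$-adic algebraic monodromy group $G_{\ell}(K)$, i.e. the Zariski closure of the image of $\Gal(\bar k/k)$ in $\GL\bigl(H^{2i}_{\et}(K_{\bar k},\Qell)\bigr)$, are respectively the images of $\MT(A)$ and $G_{\ell}(A)$ under one fixed homomorphism of Tannakian groups.

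Next I would invoke the Mumford--Tate conjecture for the abelian surface $A$, which is known unconditionally, in the form $G_{\ell}(A)^{0}=\MT(A)\otimes_{\QQ}\Qell$. Taking images under the homomorphism above propagates this equality to $G_{\ell}(K)^{0}=\MT(K)\otimes\Qell$, so that the Mumford--Tate conjecture holds for $K$. A Tate class $v\in H^{2i}_{\et}(K_{\bar k},\Qell(i))$ is fixed by $G_{\ell}(K)$, hence by $G_{\ell}(K)^{0}$, hence by $\MT(K)\otimes\Qell$; under the comparison isomorphism this places $v$ in $\bigl(H^{i,i}(K_{\CC})\cap H^{2i}(K_{\CC},\QQ)\bigr)\otimes\Qell$, the space of Hodge classes. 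By Theorem~\ref{thm:HCKum3} these are spanned by classes of subvarieties of $K_{\CC}$, and a standard spreading-out and Galois-descent argument, averaging a representing cycle over its finite Galois orbit, shows that the $\Qell$-space of Tate classes is already spanned by fundamental classes of $k$-subvarieties. This yields the Tate conjecture. The strong form follows because $G_{\ell}(K)^{0}=\MT(K)\otimes\Qell$ is reductive, being the Mumford--Tate group of a polarizable Hodge structure; hence $G_{\ell}(K)$ is reductive and each $\ell$-adic cohomology group, as a representation factoring through $G_{\ell}(K)$, is semisimple.

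The main obstacle is the motivic input of the second paragraph: producing the abelian surface $A$ over an arithmetic base, together with correspondences identifying the \emph{full} (not merely degree-two) cohomology of $K$ with an abelian motive over $A$, uniformly for varieties that are only deformation equivalent to generalized Kummer sixfolds rather than geometric generalized Kummer varieties. This is exactly the deformation-of-cycles mechanism underlying Theorem~\ref{thm:HCKum3}; once it is available in a realization-compatible and $\Gal(\bar k/k)$-equivariant form, the passage from Hodge to Tate described above is formal.
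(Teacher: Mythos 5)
Your overall skeleton---deduce the (strong) Tate conjecture from the Hodge conjecture by first securing the Mumford--Tate conjecture for $K$, then using the comparison isomorphism, Galois descent, and reductivity of the monodromy group for semisimplicity---is exactly the route the paper takes. The formal parts of your argument (Tate classes are fixed by $G_{\ell}(K)^{0}=\MT(K)\otimes\Qell$, hence are Hodge classes after comparison; averaging a representing cycle over its Galois orbit; semisimplicity from reductivity of the identity component) are all fine.

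The genuine gap is in your second paragraph, which you correctly flag as ``the main obstacle'' but do not resolve, and which cannot be resolved the way you propose. You want to realize the full cohomology of $K$ as an abelian motive cut out by algebraic correspondences from tensor powers of $H^1(A)$ for an abelian surface $A$ over (an extension of) $k$, invoking de Cataldo--Migliorini. But a general projective variety of $\mathrm{Kum}^3$-type is only \emph{deformation equivalent} to a generalized Kummer sixfold; it is not of the form $K^3(A)$ and carries no associated abelian surface, so there is no candidate $A$ and no dCM decomposition available. (The paper stresses precisely this point in the introduction, and its geometric substitute is a K3 surface $S_K$ obtained via the quotient $K/G$, not an abelian surface.) The paper sidesteps this entirely: the Mumford--Tate conjecture for \emph{all} hyper-K\"ahler varieties of known deformation type is already established in \cite{floccari2019}, \cite{soldatenkov19} and \cite[Theorem~1.18]{FFZ}, by methods (deformation of the Kuga--Satake construction and Andr\'e's theory of motivated cycles) that do not require attaching an abelian surface to $K$. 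Once you replace your second paragraph by a citation of that result, the rest of your argument goes through and coincides with the paper's proof.
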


	Theorem \ref{thm:TCKum3} follows from Theorem \ref{thm:HCKum3}, because the conjectures of Hodge and Tate are equivalent for any hyper-K\"ahler variety of known deformation type, by \cite{floccari2019, soldatenkov19, FFZ}.	
	
	\subsection*{Outline}
	The manifolds of $\mathrm{Kum}^n$-type are by definition deformation equivalent to Beauville's generalized Kummer variety $K^n(A)$ on an abelian surface~$A$ (\cite{beauville1983varietes}). 
	The Hodge conjecture holds for $K^n(A)$ by the results of de Cataldo-Migliorini~\cite{dCM04} and because it holds for any power of the abelian surface.	
	However, a deformation of a generalized Kummer variety is not necessarily of this form, and it is not clear how to approach the problem.
	
	In dimension $6$, the key ingredient needed to control algebraic cycles on any projective sixfold $K$ of $\mathrm{Kum}^3$-type is the construction which we gave in \cite{floccariKum3} of a K3 surface $S_K$ naturally associated with $K$. Geometrically, $K$ and $S_K$ are related by the existence of a projective variety $Y_K$ of $\mathrm{K}3^{[3]}$-type, which is at the same time a moduli space of sheaves on $S_K$ and a crepant resolution of the quotient $K/G$, where $G\cong (\ZZ/2\ZZ)^5$ is a group of involutions of $K$ acting trivially on its second cohomology. The construction relies on the deformation invariance of this group, proven by Hassett and Tschinkel \cite{hassettTschinkel}.
	
	Thanks to this construction we combined the work of O'Grady~\cite{O'G21}, Markman \cite{markman2019monodromy} and Voisin~\cite{voisinfootnotes} on the Kuga--Satake correspondence for $\mathrm{Kum}^n$-varieties with that of Varesco~\cite{varesco}, in order to prove that the Hodge conjecture holds for all powers of the K3 surfaces~$S_K$ arising as above (\cite[Corollary~5.8]{floccariKum3}). This result implies that the Hodge conjecture holds for the moduli space of sheaves $Y_K$ as well.
	
	Exploiting the relation between $K$ and $Y_K$ given by our construction from \cite{floccariKum3}, we deduce that any $G$-invariant rational Hodge class on $K$ is algebraic. Although this is not sufficient to prove Theorem \ref{thm:HCKum3}, via the LLV-decomposition (\cite{verbitsky1996cohomology,looijenga1997lie,green2019llv}) of the cohomology of $K$ it allows us to reduce its proof to the algebraicity of Hodge classes with the special property of remaining Hodge under every deformation of $K$. We call these canonical Hodge classes (Definition~\ref{def:canonical}); examples are given by the Chern classes of $K$. On a $\mathrm{Kum}^3$-manifold there are, however, many more canonical Hodge classes: besides those in $H^0(K,\QQ)$ and $H^{12}(K,\QQ)$, there are $17$-dimensional spaces of canonical Hodge classes in $H^4(K,\QQ)$ and $H^8(K,\QQ)$, and a $241$-dimensional space in $H^6(K,\QQ)$. 
	
	We prove in Theorem \ref{thm:HCcanonical} that canonical Hodge classes are analytic on any manifold of $\mathrm{Kum}^3$-type, and hence they are algebraic in the projective case. In degree $4$, inspired by Hassett and Tschinkel \cite{hassettTschinkel}, we show in Proposition~\ref{prop:canonicalH4} that they are all obtained as linear combinations of the second Chern class and the fundamental classes of certain natural fourfolds arising as components of fixed loci of automorphisms in $G$. These fourfolds deform along every deformation of $K$; they are absolutely trianalytic submanifolds in the sense of Verbitsky \cite{verbitsky1995trianalytic}. In degree $6$, a more involved construction is required to produce the necessary algebraic classes. In this case, we show that all canonical Hodge classes are obtained as linear combinations of pushforwards of divisor classes on the above fourfolds. 
	
	\subsection*{Notation and conventions}
	We work over the field of the complex numbers. A family of complex manifolds or algebraic varieties will be a smooth (submersive) and proper morphism $\mathcal{X}\to B$ over a smooth and connected base $B$. Tate twists in Hodge theory will be ignored throughout this text.
	If $X$ is a compact complex manifold, we say that a cohomology class $\alpha\in H^{\bullet}(X, \QQ)$ is analytic if it belongs to the subalgebra of $H^{\bullet}(X,\QQ)$ generated by Chern classes of coherent sheaves on $X$; if~$X$ is projective, then analytic classes are the same as algebraic ones, see \cite{voisinHodge}. Voisin \cite{Voisin2002Counterexample} has shown that there exist (non-projective) K\"ahler manifolds with Hodge classes which are not analytic.
	
	\subsection*{Acknowledgements}
	It is a pleasure to thank Stefan Schreieder for his detailed comments and suggestions which improved the present text. I am grateful to Lie Fu for many stimulating conversations. I thank Domenico Valloni for his encouragement and support through countless discussions.

	\section{Cohomology of $\mathrm{Kum}^3$-manifolds}\label{sec:2}
	
	In this section we recall several known properties of the cohomology of hyper-K\"ahler manifolds (of dimension $>2$). We then discuss in detail the case of $\mathrm{Kum}^3$-type. As general references about hyper-K\"ahler manifolds we recommend \cite{beauville1983varietes} and \cite{Huy99}.
	
	\begin{definition}\label{def:canonical}
		Let $X$ be a hyper-K\"{a}hler manifold. A canonical Hodge class on $X$ is a cohomology class which stays Hodge on every deformation of $X$, i.e., whose parallel transport is a Hodge class for all parallel transport operators arising from any smooth holomorphic family of hyper-K\"ahler manifolds with $X$ as one special fiber.
	\end{definition}
	
	Any polynomial expression in the Chern classes of $X$ gives a canonical Hodge class. Another example comes from the Beauville-Bogomolov form $q_X$ on $H^2(X,\QQ)$: identifying $H^2(X,\QQ)$ with its dual by means of this form, the pairing determines a class~$\overline{q}_X$ in~$\mathrm{Sym}^2(H^2(X,\QQ))$; the latter space embeds in~$H^4(X,\QQ)$ via cup-product by a theorem of Verbitsky \cite{verbitsky1996cohomology}, and we obtain a canonical Hodge class $\overline{q}=\overline{q}_X \in H^4(X,\QQ)$. 
	\begin{remark} \label{rmk:BBclass}
		If $a_1, \dots , a_m$ is an orthogonal basis of $H^2(X,\QQ)$, we have 
		\[
		\overline{q}_X = \sum_{i=1}^m \frac{ a_i^2}{q_X(a_i,a_i)} \in H^4(X,\QQ).
		\] 
	\end{remark}

	The following result of Fujiki \cite{fujiki1987} is very useful: if $\omega\in  H^{4k}(X,\QQ)$ is a canonical Hodge class, there exists $C(\omega)\in \QQ$ (the generalized Fujiki constant of $\omega$) such that
	\begin{equation}\label{eq:fujiki}
		\int_X \omega\cdot \gamma^{\dim X - 2k} = C(\omega) \cdot q_X(\gamma, \gamma)^{\tfrac{1}{2}\dim X - k}, 
	\end{equation}
	for any $\gamma\in H^2(X,\QQ)$. The Fujiki constant of $X$ is by definition $C(1)$.
	
	Another important tool is the action of the LLV-Lie algebra $\mathfrak{g}(X)$ on the cohomology of~$X$, studied first by Verbitsky \cite{verbitsky1996cohomology} and Looijenga-Lunts \cite{looijenga1997lie}. For $\gamma \in H^2(X,\QQ)$, consider the endomorphism $e_{\gamma}$ of $H^{\bullet}(X,\QQ)$ of degree $2$ defined by $e_\gamma(\beta)=\gamma \cdot \beta$.
	It turns out that, whenever $q_X(\gamma , \gamma)\neq 0$, there exists an adjoint endomorphism $\lambda_\gamma $ of $H^{\bullet}(X,\QQ)$ of degree~$-2$ such that $e_\gamma, \theta, \lambda_\gamma $ generate a copy of $\mathfrak{sl}_2$ inside $\mathfrak{gl}(H^{\bullet}(X, \QQ))$, where the action of $\theta$ on each~$H^{k}(X,\QQ)$ is multiplication by $k-\dim X$. 	
	Then $\mathfrak{g}(X)$ is defined as the Lie subalgebra of $\mathfrak{gl}(H^{\bullet}(X,\QQ))$ generated by all $\mathfrak{sl}_2$-triples as above. By definition, this Lie algebra only depends on the topology of $X$.
		
	For any hyper-K\"ahler manifold $X$ we have 
	\[\mathfrak{g}(X)=\mathfrak{so}(H^2(X,\QQ)\oplus \mathrm{U}),\] 
	where $\mathrm{U}$ denotes a hyperbolic plane; the lattice $H^2(X,\ZZ)\oplus \mathrm{U}$ is called the Mukai lattice of $X$. 
	The centralizer $\mathfrak{g}^0(X)$ of $\theta$ is isomorphic to $\QQ \cdot \theta\, \oplus \,\mathfrak{so}(H^2(X,\QQ))$ and it acts on the cohomology preserving the degree. Moreover, the LLV-Lie algebra is compatible with Hodge theory, meaning that any $\mathfrak{so}(H^2(X,\QQ))$-submodule of the cohomology of $X$ is automatically a Hodge substructure; 
	the canonical Hodge classes on $X$ correspond to the trivial subrepresentations of $\mathfrak{so}(H^2(X,\QQ))$ on the cohomology. They form a subalgebra, because the LLV-action of $\mathfrak{so}(H^2(X,\QQ))$ on the cohomology algebra is via derivations.
	
	To close this summary, we introduce the group 
	$$\Aut_0(X)\coloneqq \{ f\in \Aut(X) \ | \ f^*_{|_{H^2(X,\ZZ)}} = \mathrm{id} \}$$ of automorphisms of $X$ acting trivially on its second cohomology. It is a finite group (\cite[Proposition 9.1]{Huy99}) whose action on the cohomology commutes with the LLV-Lie algebra. The key property of $\Aut_0(X)$ is that it is deformation-invariant, by \cite[Theorem~2.1]{hassettTschinkel}.
	
	\subsection{The generalized Kummer sixfold}
	By definition, $\mathrm{Kum}^3$-manifolds are deformations of the generalized Kummer sixfolds $K^3(A)$, where $A$ is an abelian surface or a $2$-dimensional complex torus. These are constructed as follows in \cite[\S7]{beauville1983varietes}. Let  $A^{[4]}$ be the Douady space of length $4$ subspaces of dimension $0$ on $A$ (if $A$ is projective,~$A^{[4]}$ is the Hilbert scheme of $0$-dimensional subschemes of length $4$ on $A$), which is a crepant resolution of the fourth symmetric product $A^{(4)}$ of $A$. The resolution map $\nu\colon A^{[4]}\to A^{(4)}$, commonly called the Hilbert-Chow morphism, sends~$\xi\in A^{[4]}$ to its support. Consider the summation map $\Sigma\colon A^{(4)}\to A$, which sends $(a_1,a_2,a_3,a_4)\in A^{(4)}$ to $\sum_i a_i \in A$. Then the composition $\Sigma\circ \nu\colon A^{[4]}\to A$ is an isotrivial fibration and 
	$$K^3(A) \coloneqq (\Sigma\circ \nu)^{-1}(0) $$
	is a hyper-K\"ahler manifold, which is a crepant resolution of $A_0^{(4)}\coloneqq \Sigma^{-1}(0)$.
	
	The Hodge numbers of $\mathrm{Kum}^3$-manifolds have been determined in \cite{GS1993}:
	\begin{equation}\label{eq:hodgeNumbers}
	\begin{tabular}{c c c c c c c c c c c c c}
		 &  &  &  &  & & 1 &    &  & &  &  & \\
		&  &  &   &  & 0  &  & 0  & &  & &   & \\
	 &  &  &  & 1 & & 5  &  & 1 & &  &  & \\
	&  &  & 0 &  & 4  &  & 4  & & 0 & &   & \\
	 &  & 1 &  & 6 & & 37  &  & 6 & & 1 &  & \\
	 & 0 &  & 4 &  & 24  &  & 24  & & 4 & & 0  & \\
	1 &  & 5 &  & 37 & & 372  &  & 37 & & 5 &  & 1
	\end{tabular} 
\end{equation}
	
	\subsection{The LLV-decomposition}\label{subsec:LLV}
	Denoting by $V$ the (rational) Mukai lattice of a $\mathrm{Kum}^3$-manifold $K$, the decomposition of the cohomology into isotypical $\mathfrak{so}(V)$-components is given by Green-Kim-Laza-Robles in \cite[Corollary 3.6]{green2019llv}:
	\begin{equation}\label{eq:LLV}
	H^{\bullet}(K, \QQ) = V_{(3)} \oplus V_{(1,1)} \oplus 16V\oplus 240\QQ \oplus V_{\left(\tfrac{3}{2}, \tfrac{1}{2}, \tfrac{1}{2}, \tfrac{1}{2}\right)} ,
	\end{equation}
	where $V_{(3)}$ is the subalgebra generated by the second cohomology, the summand $V_{(1,1)}$ is isomorphic to~$\bigwedge^2 V$, and the last summand is the odd cohomology, which is an irreducible spinor representation. This is in fact a decomposition into submodules for the action of $H^2(K,\QQ)$ via cup-product. The components have ranks:
	\begin{equation}\label{eq:ranks}
		\begin{tabular}{|c|c|c|c|c|}
			\hline
			$V_{(3)}$ & $V_{(1,1)}$ & $16V$ & $240\QQ$ & $V_{\left(\tfrac{3}{2}, \tfrac{1}{2}, \tfrac{1}{2}, \tfrac{1}{2}\right)}$ \\
			\hline 
			$156$ & $36$ & $144$ & $240$ & $128$ \\
			\hline
		\end{tabular}
	\end{equation}
	
	Denoting by $\bar{V}$ the summand $H^2(K, \QQ)$ of~$V$, the $\mathfrak{so}(\bar{V})$-representation on the cohomology is given by (we only display the cohomology in even degrees)
	\begin{equation}\label{eq:LLVrefined}
		\begin{tabular}{|c|c|c|c|c|c|}
			\hline
			$\mathrm{deg}$ & $V_{(3)}$ & $V_{(1,1)}$ & $16V$ & $240\QQ$ & $\mathrm{rk}$\\
			\hline 
			0 & $\QQ$ & 0 & 0 & 0 & 1 \\ \hline
			2 & $\bar{V}$ & 0 & 0 & 0 & 7 \\ \hline
			4 & $\mathrm{Sym}^2(\bar{V}) $ & $\bar{V}$ & $16\QQ$ & 0 & $51$ \\ \hline
			6 & $\mathrm{Sym}^3(\bar{V}) $ & $\bigwedge^2 \bar{V} \oplus \QQ$ & $16\bar{V}$ & $240\QQ$ &  $458$ \\ \hline
			8 & $\mathrm{Sym}^2(\bar{V}) $ & $\bar{V}$ & $16\QQ$ & 0 & 51 \\ \hline
			10 & $\bar{V}$ & 0 & 0 & 0 & 7 \\ \hline
			12 & $\QQ$ & 0 & 0 & 0 & 1 \\ \hline	
		\end{tabular}
	\end{equation}
	This also describes the Hodge structure on $H^{2\bullet}(K,\QQ)$ in terms of that on $H^2(K,\QQ)$.
	
	\subsection{Generalized Fujiki constants}\label{subsec:FujikiConst}
	The following table shows the generalized Fujiki constants for the monomials in $\overline{q}$ and the Chern classes on $\mathrm{Kum}^3$-manifolds. 
	\begin{equation}\label{eq:constants}
		\begin{tabular}{|c|c|c|c|}
			\hline
			$C(1) = 60$ & $C(\overline{q}) =132$ & $C(\overline{q}^2)=396$ & $C(\overline{q}^3)=2772$ \\
			\hline
			$C(c_2)= 288 $ & $C(\overline{q} \cdot c_2)= 864$ & $C(\overline{q}^2 \cdot c_2)= 6048$ & --- \\
			\hline 
			$ C(c_2^2)=1920$ & $C(\overline{q} \cdot c_2^2)=13440$ & $C(c_4)=480$ & $C(\overline{q} \cdot c_4)=3360$  \\
			\hline 
			$C(c_2^3)=30208$ & --- & $C(c_2c_4)=6784$ & $C(c_6)=448$ \\
			\hline
		\end{tabular}
	\end{equation}

	The constant $C(1)$ can be found in \cite{rapagnetta2008beauville}, while the generalized Fujiki constants for monomials in the Chern classes of degree at most $4$ and their products with powers of $\overline{q}$ can be calculated via the results of Beckmann and Song \cite[Corollary 2.7, Proposition 2.4, Example 2.13]{BS22}.
	The Chern numbers have been determined by Nieper-Wisskirchen in \cite{nieper}.
	
	\subsection{Canonical Hodge classes} \label{subsec:HodgeCanonical}
	There are however many more canonical Hodge classes on $\mathrm{Kum}^3$-manifolds than those which are polynomials in $\overline{q}$ and the Chern classes. 
	By the LLV-decomposition \eqref{eq:LLVrefined}, the space of canonical Hodge classes in degree $4$ has dimension $17$, spanned by $\overline{q}$ and the copy of $16\QQ$ coming from the third summand in \eqref{eq:LLV}, while canonical Hodge classes in the middle cohomology form a vector space of dimension~$241$, spanned by the unique (up to multiples) canonical Hodge class in the LLV-submodule~$V_{(1,1)}$ and the summand $240\QQ$ of~\eqref{eq:LLV}. Canonical Hodge classes in degree $8$ form another $17$-dimensional subspace.
	
	\subsection{Automorphisms trivial on the second cohomology}\label{subsec:aut0}
	Let $A_k$ denote the subgroup of $k$-torsion points of a $2$-dimensional complex torus $A$. By \cite{boissiere2011higher}, we have 
	\[
	\Aut_0(K^3(A)) = A_4\rtimes \langle -1\rangle,
	\]
	where the second factor acts on the first as the inverse. The action on $K^3(A)$ is induced by the natural action on $A^{[4]}$; by \cite{oguiso2020no}, the involution $-1$ acts as multiplication by $-1$ on the third cohomology.
	There are two subgroups which will play an important role.
	\begin{itemize}
		\item Let $\Gamma\coloneqq A_4\subset \Aut_0(K^3(A))$; it is the subgroup of automorphisms which act trivially on $H^3(K^3(A),\ZZ)$ (and hence on the whole odd cohomology, since it is an irreducible LLV-module). We have $\Gamma\cong(\ZZ/4\ZZ)^4$.
		\item Let $G\coloneqq A_2\times \langle -1\rangle \subset \Aut_0(K^3(A))$; note that $G\cong (\ZZ/2\ZZ)^5$ is abelian and intersects~$\Gamma$ in $A_2$.
	\end{itemize} 
   	Thanks to the deformation-invariance of the automorphisms trivial on the second cohomology, for any $K$ of $\mathrm{Kum}^3$-type we have $\Aut_0(K) \cong (\ZZ/4\ZZ)^4 \rtimes \ZZ/2\ZZ$, and the subgroups $G\cong (\ZZ/2\ZZ)^5$ and $\Gamma\cong (\ZZ/4\ZZ)^4$.
   	
   	\begin{remark}\label{rmk:definingG}
   		For an arbitrary $K$ of $\mathrm{Kum}^3$-type, the group $G$ is canonically defined as the subgroup of $\Aut_0(K)$ generated by the automorphisms $g$ such that the fixed locus~$K^g$ has a $4$-dimensional component, see \cite[\S2]{floccariKum3}.
   		We will eventually prove that $G\subset \Aut_0(K)$ is the subgroup of automorphisms acting trivially on $H^4(K,\ZZ)$, see Remark \ref{rmk:characterizationG}.
   	\end{remark}

	\begin{lemma}\label{lem:Gammainvariants}
	For any $K$ of $\mathrm{Kum}^3$-type, the space of $\Gamma$-invariants in $H^{\bullet}(K,\QQ)$ is a LLV-submodule
	\[ 
	(H^{\bullet}(K,\QQ))^{\Gamma} \cong V_{(3)} \oplus V_{(1,1)} \oplus V \oplus V_{\left(\tfrac{3}{2}, \tfrac{1}{2}, \tfrac{1}{2}, \tfrac{1}{2}\right)} .
	\]
	Moreover:	
		\begin{enumerate}[label=(\roman*)]
			\item the space of canonical Hodge classes in $H^4(K, \QQ)^{\Gamma}$ is two-dimensional, spanned by the linearly independent classes $c_2$ and $\overline{q}_{K}$;		
			\item the space of canonical Hodge classes in $H^6(K,\QQ)^{\Gamma}$ is one-dimensional, and it is contained in the LLV-summand $V_{(1,1)}$.
		\end{enumerate}
	\end{lemma}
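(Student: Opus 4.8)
The plan is to first determine $(H^\bullet(K,\QQ))^\Gamma$ as an LLV-module, and then to read off (i) and (ii) by counting trivial $\mathfrak{so}(\bar V)$-subrepresentations against the table \eqref{eq:LLVrefined}. Since the $\Gamma$-action commutes with $\mathfrak{so}(V)$, the invariants form an $\mathfrak{so}(V)$-submodule, hence a sub-sum $\bigoplus_W M_W^\Gamma\otimes W$ of the isotypical decomposition \eqref{eq:LLV}, where $M_W^\Gamma$ is the $\Gamma$-invariant multiplicity space of the irreducible $W$. Because $\Aut_0(K)$ together with its action on cohomology is deformation-invariant, it suffices to carry out the computation on the model $K=K^3(A)$ with $\Gamma=A_4$ acting by translations. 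Two summands come for free: $\Gamma$ acts trivially on $H^2$, hence on the subalgebra $V_{(3)}$ it generates, and trivially on $H^3$, hence on the irreducible module $V_{\left(\tfrac32,\tfrac12,\tfrac12,\tfrac12\right)}$ it generates. Thus $V_{(3)}\oplus V_{\left(\tfrac32,\tfrac12,\tfrac12,\tfrac12\right)}$ lies in the invariants, of dimension $156+128=284$ by \eqref{eq:ranks}.

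The crux is to pin down the remaining invariants, and for this I would use the translation cover $\mu\colon A\times K^3(A)\to A^{[4]}$, $(a,\xi)\mapsto t_a(\xi)$, where $t_b$ denotes translation by $b$. This is an \'etale Galois cover of degree $|A_4|=256$ whose deck group is $\Gamma$, acting by $\tau\cdot(a,\xi)=(a+\tau,\,t_{-\tau}\xi)$. As translations act trivially on $H^\bullet(A,\QQ)$, passing to $\Gamma$-invariants gives $H^\bullet(A^{[4]},\QQ)\cong H^\bullet(A,\QQ)\otimes (H^\bullet(K^3(A),\QQ))^\Gamma$, so the Poincar\'e polynomials obey $P_{A^{[4]}}(t)=(1+t)^4\,P_{(K^3(A))^\Gamma}(t)$. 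I would then compute the low-degree Betti numbers of $A^{[4]}$ from the symmetric-product description of its cohomology \cite{dCM04}, obtaining $b_{\le 4}(A^{[4]})=(1,4,13,40,111)$; dividing by $(1+t)^4$ yields invariant Betti numbers $(1,0,7,8,36)$ in degrees $0$–$4$, while $\dim(H^\bullet(K^3(A)))^\Gamma=P_{A^{[4]}}(1)/2^4=329$.

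With these numbers the module structure is forced. Write the invariants as $V_{(3)}\oplus V_{\left(\tfrac32,\tfrac12,\tfrac12,\tfrac12\right)}\oplus C$, where by semisimplicity of $\mathfrak{so}(V)$ the complement $C$ is a submodule of the remaining isotypical part $V_{(1,1)}\oplus 16V\oplus 240\QQ$; say $C=a\,V_{(1,1)}\oplus b\,V\oplus c\,\QQ$ with $a\le 1$, $b\le 16$, $c\le 240$. The total dimension gives $36a+9b+c=329-284=45$, and the degree-$4$ Betti number gives $7a+b=36-28=8$ (reading the degree-$4$ row of \eqref{eq:LLVrefined}, where $240\QQ$ contributes nothing). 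The only nonnegative solution is $(a,b,c)=(1,1,0)$, so $C\cong V_{(1,1)}\oplus V$, proving the decomposition. I expect the main difficulty to lie exactly here: not in the dimension count, but in extracting enough of the \emph{graded} structure — through the covering identity and the Betti numbers of $A^{[4]}$ — to exclude stray copies of the trivial module $\QQ$ and of $V$.

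Finally, (i) and (ii) follow by identifying canonical Hodge classes with trivial $\mathfrak{so}(\bar V)$-subrepresentations. In degree $4$ the invariant module is $\mathrm{Sym}^2(\bar V)\oplus\bar V\oplus\QQ$, whose space of $\mathfrak{so}(\bar V)$-invariants is two-dimensional: the form $\overline{q}\in\mathrm{Sym}^2\bar V$ and the trivial summand from $V$. As $c_2$ and $\overline{q}$ are canonical Hodge classes fixed by $\Gamma$, statement (i) reduces to their linear independence, which I would verify from \eqref{eq:constants}: for the pairing $(x,y)\mapsto\int_K x\,y\,\overline{q}$ on degree-$4$ classes, the Gram matrix of $\{\overline{q},c_2\}$ is
\[
\begin{pmatrix} C(\overline{q}^3) & C(\overline{q}^2 c_2)\\ C(\overline{q}^2 c_2) & C(\overline{q}\,c_2^2)\end{pmatrix}=\begin{pmatrix} 2772 & 6048\\ 6048 & 13440\end{pmatrix},
\]
of nonzero determinant $677376$. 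In degree $6$ the invariant module is $\mathrm{Sym}^3(\bar V)\oplus(\bigwedge^2\bar V\oplus\QQ)\oplus\bar V$, whose only $\mathfrak{so}(\bar V)$-invariant is the trivial summand lying inside the $V_{(1,1)}$-part; this gives (ii).
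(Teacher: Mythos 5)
Your proof is correct and follows essentially the same route as the paper: both rest on the Galois cover $A\times K^3(A)\to A^{[4]}$ with deck group $\Gamma=A_4$, the triviality of translations on $H^{\bullet}(A,\QQ)$, and a numerical comparison with the LLV-table \eqref{eq:LLVrefined} to force the invariant submodule. The only differences are in the bookkeeping: the paper pins down the degree-$4$ invariants via the Hodge numbers of $A^{[4]}$ and then uses $b_6(A^{[4]})=592$ to kill the $240\QQ$-summand, whereas you use $b_4(A^{[4]})=111$ together with the total Betti number $P_{A^{[4]}}(1)=5264=16\cdot 329$ to solve for the multiplicities $(a,b,c)=(1,1,0)$, and you replace the citation of \cite{BS22} for the independence of $c_2$ and $\overline{q}$ by a Gram-determinant computation from table \eqref{eq:constants} — all of your numbers check out.
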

	\begin{proof}
		It suffices to work with the generalized Kummer variety $K^3(A)$ on an abelian surface $A$.
		Consider the LLV-decomposition \eqref{eq:LLV}:
		\[
		H^{\bullet}(K^3(A),\QQ) = V_{(3)}\oplus V_{(1,1)} \oplus 16V\oplus 240\QQ \oplus V_{\left(\tfrac{3}{2}, \tfrac{1}{2}, \tfrac{1}{2}, \tfrac{1}{2}\right)}.
		\]
		The group $\Gamma$ acts trivially on $V_{(3)}$ and $V_{\left(\tfrac{3}{2}, \tfrac{1}{2}, \tfrac{1}{2}, \tfrac{1}{2}\right)}$.
		In degree $4$, the decomposition becomes
		\begin{equation} \label{eq:LLVdeg4}
			H^4(K^3(A), \QQ) = \mathrm{Sym}^2(H^2(K^3(A), \QQ))\oplus H^2(K^3(A),\QQ) \oplus 16\QQ,
		\end{equation} and $\Gamma$ acts trivially on the first summand. 		
		By \cite[p. 769]{beauville1983varietes}, the quotient of $K^3(A)\times A$ by the diagonal action of $\Gamma=A_4$ is $A^{[4]}$. 
		Since the group $\Gamma$ acts trivially on the cohomology of ${A}$ as well as on the cohomology groups~$H^2(K^3(A),\QQ)$ and~$H^3(K^3(A),\QQ)$, we have
		\begin{align*}
			H^4 (A^{[4]},\QQ) = & H^4(K^3(A),\QQ)^{\Gamma}\oplus \bigl(H^3(K^3(A),\QQ)\otimes H^1(A,\QQ)\bigr) \\ & \oplus\bigl( H^2(K^3(A),\QQ)\otimes H^2(A,\QQ)\bigr) \oplus H^4(A,\QQ).
		\end{align*}
	By \cite{GS1993}, the Hodge numbers of the left hand side are $$h^{4,0}(A^{[4]})=2, \  \ \  h^{3,1}(A^{[4]})=23,\ \ \ h^{2,2}(A^{[4]})=61.$$
	Calculating the Hodge numbers of the last $3$ summands on the right-hand side with \eqref{eq:hodgeNumbers}, we deduce that 
		\[
		H^4(K^3(A),\QQ)^{\Gamma} = \mathrm{Sym}^2(H^2(K^3(A),\QQ)) \oplus T
		\]
		for an effective Hodge structure $T$ of weight $4$ with Hodge numbers $(0,1,6,1,0)$.
		Since~$\Gamma$ respects the LLV-decomposition \eqref{eq:LLVdeg4}, we find that $T\cong H^2(K^3(A),\QQ)\oplus \QQ$. This implies that the summand $V_{(1,1)}$ of \eqref{eq:LLV} is contained in $ H^{\bullet}(K^3(A),\QQ)^{\Gamma}$, and that $(16V)^{\Gamma}\cong V$.
		
		Next, the results of \cite{GS1993} give $b_6(A^{[4]})= 592$ and we write
		\begin{align*}
			H^6 (A^{[4]}, \QQ) = H^6(K^3(A), \QQ)^{\Gamma} \oplus R
		\end{align*}
		where \begin{align*}
			R=\bigl(H^5(K^3&(A), \QQ)  \otimes H^1(A, \QQ)\bigr) \oplus \bigl( H^4(K^3(A), \QQ)^{\Gamma} \otimes H^2(A, \QQ)\bigr) \\ & \oplus \bigl(H^3(K^3(A), \QQ)\otimes H^3(A, \QQ)\big) \oplus H^2(K^3(A),\QQ).
		\end{align*}
		We have just determined the invariants for the action of $\Gamma$ on $H^4(K^3(A),\QQ)$, and we calculate that $\mathrm{dim}_{\QQ}(R)= 479$, so that $H^6(K^3(A),\QQ)^{\Gamma}$ has rank $113$.
		Table \eqref{eq:LLVrefined} now gives
		$$H^6(K^3(A),\QQ)^{\Gamma} \cong \mathrm{Sym}^3(H^2(K^3(A),\QQ)) \oplus \left( \bigwedge^2 H^2(K^3(A), \QQ) \oplus \QQ\right)\oplus H^2(K^3(A),\QQ) \oplus S, $$
		where $S$ is the space of $\Gamma$-invariants on the LLV-trivial summand $240\QQ$.
		But then 
		$$ \mathrm{dim}_{\QQ}(S)=113 - 84 - 21 - 1 - 7 = 0.$$
		We have thus shown that 
		\[ 
		(H^{\bullet}(K^3(A),\QQ))^{\Gamma} \cong V_{(3)} \oplus V_{(1,1)} \oplus V \oplus V_{\left(\tfrac{3}{2}, \tfrac{1}{2}, \tfrac{1}{2}, \tfrac{1}{2}\right)}. 
		\]
		
		Therefore, the only (up to multiples) $\Gamma$-invariant canonical Hodge class in $H^6(K^3(A),\QQ)$ is the one which comes from the summand $V_{(1,1)}$ in the LLV-decomposition, which proves~$(ii)$.
		The canonical Hodge classes in $\mathrm{Sym}^2(H^2(K^3(A),\QQ))$ are all multiples of~$\overline{q}_{K^3(A)}$. It follows that the $\Gamma$-invariant canonical Hodge classes in $H^4(K^3(A),\QQ)$ form a $2$-dimensional vector space, which clearly contains $\overline{q}_{K^3(A)} $ and $c_2$. These classes are linearly independent by \cite[Example 2.13]{BS22}, and $(i)$ follows.
	\end{proof}

	\subsection{A basis for $\Gamma$-invariant canonical Hodge classes}
	It will be convenient for our computations to define
		\begin{equation}\label{eq:z}
			 z\coloneqq c_2 - \tfrac{C(c_2)}{C(\overline{q})}\overline{q} = c_2-\tfrac{24}{11}\overline{q} \in H^4(K,\QQ).
		 \end{equation}
	The point is that $z$ is a canonical Hodge class and $C(z)=0$; by \cite[Proposition 2.4]{BS22}, it follows that $C(\overline{q}\cdot z)=0$ and $\overline{q}^2\cdot z=0$. The classes $\overline{q}$ and $z$ form a basis of $\Gamma$-invariant canonical Hodge classes in degree $4$.
	
	\begin{lemma}\label{lem:basis}
		The classes $\overline{q}^2$ and $\overline{q}\cdot z$ are linearly independent and give a basis of $\Gamma$-invariant canonical Hodge classes in $H^{8}(K, \QQ)$. We have:
		\begin{equation*}
			\begin{tabular}{|c|c|c|}
				\hline 
				$C(z^2) =\tfrac{384}{11} $ & $C (\overline{q} \cdot z^2) = \tfrac{2688}{11} $ & $ C(z^3) = -\tfrac{22016}{121} $ \\
				\hline
				$z^2= \tfrac{32}{363} \overline{q}^2 - \tfrac{172}{231} \overline{q} \cdot z$ \rule{0pt}{2.4ex}& $c_2^2 = \tfrac{160}{33} \overline{q}^2 + \tfrac{76}{21}\overline{q}\cdot z$ & $c_4= \tfrac{40}{33}\overline{q}^2 - \tfrac{47}{21}\overline{q}\cdot z$ \\
				\hline
			\end{tabular}
		\end{equation*}
	\end{lemma}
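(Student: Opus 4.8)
The plan is to reduce everything to two facts: that the space of $\Gamma$-invariant canonical Hodge classes in $H^8(K,\QQ)$ is two-dimensional, and that $\overline{q}$ and $z$ pair with degree-$8$ classes in a way completely controlled by the Fujiki relation \eqref{eq:fujiki}.

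First I would fix the dimension. By Lemma \ref{lem:Gammainvariants} the $\Gamma$-invariant cohomology is the LLV-submodule $V_{(3)}\oplus V_{(1,1)}\oplus V\oplus V_{\left(\tfrac{3}{2},\tfrac{1}{2},\tfrac{1}{2},\tfrac{1}{2}\right)}$, and canonical Hodge classes are precisely the $\mathfrak{so}(\bar{V})$-trivial vectors. Reading the degree-$8$ row of \eqref{eq:LLVrefined}: $V_{(3)}$ contributes $\mathrm{Sym}^2(\bar{V})$, whose trivial part is the line spanned by the form; $V_{(1,1)}$ contributes the standard module $\bar{V}$, with no trivial part; and the single copy $V$ — the $\Gamma$-invariant part of $16V$ — contributes a trivial summand $\QQ$ in degree $8$. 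Hence the target space is two-dimensional, and it will be enough to prove that $\overline{q}^2$ and $\overline{q}\cdot z$ are linearly independent; the expressions for $z^2$, $c_2^2$ and $c_4$ will then follow by solving linear systems, since these are also $\Gamma$-invariant canonical Hodge classes of degree $8$.

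The computational engine is \eqref{eq:fujiki}, used in two ways. For a class of top degree $12$ it gives $\int_K\omega = C(\omega)$, so the Fujiki constants double as intersection numbers. For a weight-$4$ canonical class $\omega$, polarizing $\int_K\omega\cdot\gamma^4 = C(\omega)\,q_K(\gamma,\gamma)^2$ and evaluating on an orthogonal basis of $H^2(K,\QQ)$ (of rank $7$) yields $\int_K\omega\cdot\overline{q}^2 = 21\,C(\omega)$; in particular $C(z)=0$ forces $\overline{q}^2\cdot z=0$, the vanishing already recorded after \eqref{eq:z}. This orthogonality is the crux: it makes the cup-product pairing between the degree-$8$ classes $\{\overline{q}^2,\overline{q}\cdot z\}$ and the degree-$4$ basis $\{\overline{q},z\}$ diagonal, with entries $\int_K\overline{q}^3 = C(\overline{q}^3)=2772$ and $\int_K\overline{q}\cdot z^2 = C(\overline{q}\cdot z^2)$. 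Since the second entry is non-zero, the pairing is perfect, so $\overline{q}^2$ and $\overline{q}\cdot z$ are independent and form the required basis.

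It then remains to compute. Using linearity of $C(\cdot)$ on canonical classes of a fixed degree (immediate from \eqref{eq:fujiki}) and the expansion $z=c_2-\tfrac{24}{11}\overline{q}$ from \eqref{eq:z}, I would read off $C(z^2)$, $C(\overline{q}\cdot z^2)$ and $C(z^3)$ from the constants in \eqref{eq:constants}. Finally, writing each of $z^2$, $c_2^2$, $c_4$ as $a\,\overline{q}^2 + b\,\overline{q}\cdot z$, I would recover $a$ by cupping with $\overline{q}$ (which kills the $\overline{q}\cdot z$ term, by $\overline{q}^2\cdot z=0$, leaving $a\int_K\overline{q}^3$) and $b$ by cupping with $z$ (which kills the $\overline{q}^2$ term, leaving $b\int_K\overline{q}\cdot z^2$); the left-hand sides are again top-degree Fujiki constants obtained from \eqref{eq:constants} through $z=c_2-\tfrac{24}{11}\overline{q}$. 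I expect the only genuine subtlety to be the polarization identity $\int_K\omega\cdot\overline{q}^2=21\,C(\omega)$ underlying $\overline{q}^2\cdot z=0$; granting it, the lemma reduces to the linear bookkeeping above.
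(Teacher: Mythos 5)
Your proposal is correct and follows essentially the same route as the paper: both rest on the two-dimensionality of the $\Gamma$-invariant canonical classes in degree $8$ coming from Lemma \ref{lem:Gammainvariants} and table \eqref{eq:LLVrefined}, on the vanishing $\overline{q}^2\cdot z=0$, and on reading top-degree intersection numbers off table \eqref{eq:constants} via $z=c_2-\tfrac{24}{11}\overline{q}$ before solving for the coefficients by cupping with $\overline{q}$ and $z$. The only cosmetic differences are that you obtain $C(z^2)$ by direct linearity from the table where the paper invokes $C(\overline{q}\cdot z^2)=7\,C(z^2)$ from \cite[Proposition 2.4]{BS22}, and that you make the diagonal perfect pairing underlying the linear independence explicit.
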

	\begin{proof}
		Using the constants of table \eqref{eq:constants}, we find the equations
		$$13440=\overline{q}\cdot c_2^2 = \overline{q} \cdot (\tfrac{24^2}{11^2} \overline{q}^2 + \tfrac{48}{11} \overline{q} \cdot z + z^2) = \tfrac{24^2}{11^2} \overline{q}^3 +\overline{q} \cdot z^2$$
		and
		\[30208 = c_2^3=\tfrac{24^3}{11^3} \overline{q}^3 + 3\cdot \tfrac{24}{11} \overline{q} \cdot z^2 + z^3,\]
		which give $\overline{q}\cdot z^2 = \tfrac{2688}{11}$ and $z^3=-\tfrac{22016}{121}$.
		These numbers imply that $\overline{q}\cdot z$ is not zero, and hence $\overline{q}^2 $, $\overline{q}\cdot z$ form a basis for $\Gamma$-invariant canonical Hodge classes in $H^8(K, \QQ)$. 
		By \cite[Proposition 2.4]{BS22}, we have $C(\overline{q}\cdot z^2)=7C(z^2)$ and we find $C(z^2)=\tfrac{384}{11}$. We have
		$$z^2 = \tfrac{C(z^2)}{C(\overline{q}^2)} \overline{q}^2 + \lambda \overline{q} \cdot z = \tfrac{32}{363} \overline{q}^2 + \lambda \overline{q} \cdot z,$$
		for some $\lambda\in \QQ$.
		Taking the cup-product with $z$ we find $z^3=\lambda \overline{q}\cdot z^2$, and so $\lambda=-\tfrac{172}{231}$.
		Similarly, writing $c_2^2=\tfrac{C(c_2^2)}{C(\overline{q}^2)}\overline{q}^2 + a \overline{q} \cdot z=\tfrac{160}{33}\overline{q}^2 + a\overline{q}\cdot z$ we obtain
		\[
		30208 = c_2^3 = \tfrac{24}{11}\cdot \tfrac{160}{33} \overline{q}^3 + a \overline{q} \cdot z^2,
		\]
		so that $a = \tfrac{76}{21}$, while writing $c_4= \tfrac{C(c_4)}{C(\overline{q}^2)}\overline{q}^2 + b \overline{q} \cdot z= \tfrac{40}{33}\overline{q}^2 + b \overline{q} \cdot z$ we obtain
		\[
		6784 = c_2c_4 = \tfrac{24}{11}\cdot \tfrac{480}{396} \overline{q}^3 + b \overline{q}\cdot z^2,
		\]
		from which we find $b = -\frac{47}{21}$.	
		\end{proof}
	
		\section{Geometry of generalized Kummer sixfolds} \label{sec:3}
		
	The following construction is of fundamental importance for the present article. 
	\begin{theorem}[\cite{floccariKum3}]\label{thm:associatedK3}
		Let $K$ be any $\mathrm{Kum}^3$-manifold. There exists a crepant resolution 
		$$Y_K\to K/G $$
		with $Y_K$ a manifold of $\mathrm{K}3^{[3]}$-type.
		Moreover, $W\coloneqq \bigcup_{1\neq g\in G} K^g$ is the union of $16$ transverse fourfolds of $\mathrm{K}3^{[2]}$-type, and, letting $\tilde{K}\coloneqq \mathrm{Bl}_W(K)$, we have $Y_K= \tilde{K}/G$.
		
		If $K$ is projective, then there exists a uniquely determined (up to isomorphism) K3 surface~$S_K$ such that $Y_K$ is birational to a moduli space of stable sheaves on $S_K$.
	\end{theorem}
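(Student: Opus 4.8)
The plan is to establish the whole construction first for the model $K=K^3(A)$ of a generalized Kummer sixfold and then to propagate it over the connected deformation space of $\mathrm{Kum}^3$-manifolds. The reduction to the model is powered by the deformation-invariance of $\Aut_0$ recalled above, which makes the subgroup $G\cong(\ZZ/2\ZZ)^5$ and its action on cohomology deformation-invariant. Since the fixed loci of a finite group of automorphisms, their blow-up, and the resulting quotient all vary in smooth families, it suffices to produce $W$, $\tilde K$ and $Y_K$ for $K^3(A)$, to verify that $Y_K$ is hyper-K\"ahler, and to check that its deformation type is locally constant. I would make this precise by carrying out a relative version of the construction over the Kuranishi/Teichm\"uller space, using the local Torelli theorem to spread out the $G$-action together with its fixed data.

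Next I would analyse the fixed loci on $K^3(A)$, writing $G=A_2\times\langle -1\rangle$. The $16$ anti-involutions $\iota_t\colon\xi\mapsto t-\xi$, for $t\in A_2$, each have a $4$-dimensional fixed component, whereas the $15$ nontrivial pure translations fix only surfaces; moreover one checks that every translation-fixed surface is already contained in $\bigcup_t\mathrm{Fix}(\iota_t)$, so nothing is missed in codimension two. By a direct analysis of length-$4$ subschemes invariant under $a\mapsto t-a$ (whose generic member is a pair of orbits $\{a,t-a\}$, $\{b,t-b\}$, automatically of sum zero since $2t=0$), I would identify each component with a hyper-K\"ahler fourfold of $\mathrm{K}3^{[2]}$-type modelled on $\mathrm{Km}(A)^{[2]}$. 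Finally, since $\iota_t\iota_s$ is the translation $\tau_{t+s}$, any two components meet inside a translation-fixed surface, and I would verify that this intersection is transverse, of the expected dimension $4+4-6=2$; this is exactly the transversality asserted in the statement.

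I would then construct the resolution. Blowing up $W$ yields $\tilde K=\mathrm{Bl}_W(K)$ with an induced $G$-action, and the point is that $\tilde K/G$ is smooth. Along a generic point of $\mathrm{Fix}(\iota_t)$ the stabilizer is $\langle\iota_t\rangle$, acting as $\pm 1$ on the rank-two normal bundle, so locally $K/G$ looks like $\CC^4\times(\CC^2/\pm1)$, an $A_1$-singularity resolved crepantly by a single blow-up, as for the Kummer surface $\mathrm{Km}(A)=\widetilde{A/\pm 1}$; at the transverse crossings the two $A_1$-directions are independent and the simultaneous blow-up again resolves crepantly. Because $G$ acts by symplectomorphisms the exceptional discrepancies vanish, so $Y_K\coloneqq\tilde K/G$ is a crepant resolution of $K/G$ and, by Beauville--Fujiki, a hyper-K\"ahler sixfold. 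To pin down its type I would compute $b_2(Y_K)$: the whole of $H^2(K,\QQ)$, on which $G$ acts trivially since $G\subset\Aut_0(K)$, descends to $7$ classes, and the $16$ exceptional divisors contribute $16$ more, giving $b_2(Y_K)=23$, which among the known six-dimensional deformation types forces $\mathrm{K}3^{[3]}$-type.

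The most delicate step is the projective statement. Here I would use the lattice theory of $\mathrm{K}3^{[3]}$-type manifolds: the $16$ exceptional classes together with the descended second cohomology equip the Mukai lattice of $Y_K$ with a primitive hyperbolic sublattice of the shape required for $Y_K$ to be a moduli space of stable sheaves on a K3 surface, which for $K^3(A)$ is a moduli space on $\mathrm{Km}(A)$. Spreading this datum out over the deformation family above and invoking the Torelli theorem for K3 surfaces to recover the surface from the relevant weight-two Hodge structure then gives both the existence and the uniqueness of $S_K$. I expect the genuine obstacles to be the local-model analysis of the fixed loci at the transverse crossings (ensuring that a single blow-up resolves $K/G$ crepantly and that $\tilde K/G$ is smooth) and the identification of the precise Mukai-lattice datum underlying the moduli interpretation; by comparison, the deformation-theoretic propagation, though technical, is comparatively routine.
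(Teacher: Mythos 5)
This theorem is not proved in the present paper: it is imported verbatim from the author's earlier work \cite{floccariKum3} (see \S 2 there), so there is no internal proof to compare against; I will therefore assess your sketch on its own terms. Your overall strategy (work out everything on the model $K^3(A)$, then propagate by deformation using the invariance of $\Aut_0$) is the right one and matches the cited reference, and your identification of the sixteen fourfolds with $\Km^\tau(A)^{[2]}$ and of the pairwise intersections with translation-fixed K3 surfaces is consistent with Propositions \ref{prop:relevantSubvarieties} and \ref{prop:groupAction}. But there are several genuine gaps. First, your determination of the deformation type of $Y_K$ from $b_2(Y_K)=23$ ``among the known six-dimensional deformation types'' is not a proof: there is no classification of hyper-K\"ahler sixfolds, so a priori $Y_K$ could be of an unknown type with $b_2=23$. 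One must actually exhibit $Y_K$ as deformation equivalent to a $\mathrm{K}3^{[3]}$-type manifold (in \cite{floccariKum3} this is done by identifying $Y_{K^3(A)}$ with a moduli space of sheaves on the Kummer K3 of $A$); this identification is also what the last assertion of the theorem rests on, and your appeal to ``lattice theory of the shape required'' leaves both the existence and, more seriously, the uniqueness of $S_K$ (which must rule out distinct Fourier--Mukai partners) unaddressed.

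Second, your local analysis of $W$ and of the quotient singularities is incomplete in two places. The fixed locus of each anti-involution $(\tau,-1)$ is not just the fourfold $W_\tau$: it also contains isolated points ($140$ of them, as used in the proof of Proposition \ref{prop:Ginvariants}), and since $W$ is defined as $\bigcup_{1\neq g}K^g$ you must show these points already lie on the other fourfolds $W_{\tau'}$, or else $W$ would have zero-dimensional components, $\mathrm{Bl}_W(K)$ would not be what you describe, and $K/G$ would acquire terminal singularities of type $\CC^6/\pm1$ admitting no crepant resolution. Likewise, your local model only treats generic points of one $W_\tau$ and pairwise transverse crossings; by Proposition \ref{prop:relevantSubvarieties}(iii) three distinct fourfolds meet in points, where the stabilizer is $(\ZZ/2\ZZ)^3$ and the singularity of $K/G$ is no longer a product of $A_1$'s along a smooth locus. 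Smoothness and crepancy of $\tilde K/G\to K/G$ at these triple points (and the fact that the blow-up of $W$ can be performed as an iterated blow-up, cf.\ Remark \ref{rmk:iteratedBlowUp}) require a separate argument that your sketch does not supply.
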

	In the above statement, $G\cong (\ZZ/2\ZZ)^5$ is the subgroup of $\Aut_0(K)$ defined in \S\ref{subsec:aut0}.
	Taking moduli spaces of stable sheaves on K3 surfaces is a prominent source of constructions of hyper-K\"ahler varieties of $\mathrm{K}3^{[n]}$-type, \cite{huybrechts2010geometry}. By \cite{BM14a, BM14b}, in the projective case $Y_K$ can be realized as a moduli space of stable objects in the derived category of $S_K$.
	
	\begin{remark}\label{rmk:iteratedBlowUp}
		Choosing any ordering $W_i$, $i=1,\dots,16$, of the irreducible components of $W$, the blow-up $\tilde{K}\to K$ may be performed as the successive blow-up of the strict transforms of the $W_i$. This can be checked via a local computation or by applying the general~\cite[Proposition 2.10]{kiem}. 
		We thus find a sequence
		\[
		\tilde{K}=\tilde{K}_{16} \xrightarrow{p_{16}} \tilde{K}_{15} \xrightarrow{p_{15}}\dots \xrightarrow{p_2} \tilde{K}_1 \xrightarrow{p_1}\tilde{K}_0 = K,
		\]
		where $p_i$ is the blow-up of $\tilde{K}_{i-1}$ in a smooth submanifold $\bar{W}_{i} \subset \tilde{K}_{i-1}$; each $\bar{W}_i$ is the iterated blow-up of the $\mathrm{K}3^{[2]}$-manifold $W_i$ along smooth surfaces.
	\end{remark}
	
	The components $W_i$ of $W$ can be described very explicitly when $K=K^3(A)$ is the generalized Kummer sixfold on a complex torus $A$.
	
	\subsection{Notation}\label{subsec:someNotation}
	We denote by $A_k\subset A$ the subgroup of $k$-torsion points; we have $A_k\cong (\ZZ/k\ZZ)^4$. Given a point $\tau\in A_2$ we will denote by $A_{2,\tau}\subset A_4$ the subset of those $a$ such that $2a=\tau$ (equivalently, such that $a=-a+\tau$). Translation by any $\tau'\in A_2$ maps $A_{2,\tau}$ to itself, showing that $A_{2,\tau}$ is a torsor under $A_{2}$. If $\tau\in A_2$, by abuse of notation we will denote by $\tau$ (resp. by $(\tau, -1)$) the automorphism of $A$ given by $\tau(a)=a+\tau$ (resp. by $(\tau, -1)(a) = -a +\tau$). 
	For any $\tau\in A_{2}$, the quotient $A/\langle (\tau, -1) \rangle$ is a singular surface with $16$ double points corresponding to the points of $A_{2,\tau}$. Blowing-up these points gives a K3 surface $\Km^{\tau}(A)$, isomorphic to the Kummer surface associated to $A$.
	
	\begin{definition}\phantomsection\label{def:relevantSubvarieties}
		\begin{enumerate}[label=(\roman*)]
			\item For any $\tau\in A_2$, we let $W_{\tau}\subset K^3(A)$ be the strict transform of 
			\[
			\{(a, b, -a+\tau, -b+\tau), \ a,b\in A\} \subset A_{0}^{(4)}
			\]
			under the Hilbert-Chow morphism $\nu\colon K^3(A)\to A_0^{(4)}$.
			\item For any $\tau\neq \tau' \in A_2$, we let $V_{\tau, \tau'} \subset K^3(A)$ be the strict transform of 
			\[
			\{(a, a+\tau+\tau', -a+\tau, -a+\tau'), \ a \in A\} \subset A_{0}^{(4)}
			\]
			under the Hilbert-Chow morphism $\nu\colon K^3(A)\to A_0^{(4)}$. Note that $V_{\tau, \tau'} = V_{\tau', \tau}$.
		\end{enumerate}
	\end{definition}
	
	The $W_{\tau}$, $\tau\in A_2$, are the $16$ components of the locus $W=\bigcup_{1\neq g\in G} (K^3(A))^g \subset K^3(A)$ appearing in Theorem \ref{thm:associatedK3} in the special case $K=K^3(A)$. We summarize in the next two Propositions several results obtained in \cite[\S2]{floccariKum3} (where the notation is slightly different).
	\begin{proposition}\phantomsection \label{prop:relevantSubvarieties}
		\begin{enumerate}[label=(\roman*)]
			\item Let $\tau\in A_2$. Then $W_{\tau}$ is a hyper-K\"ahler manifold of $\mathrm{K}3^{[2]}$-type, isomorphic to $\Km^{\tau}(A)^{[2]}$. The Hilbert-Chow morphism $\nu\colon K^3(A)\to A^{(4)}$ restricts to a resolution 
			\[
			W_{\tau}=\Km^{\tau}(A)^{[2]} \xrightarrow{\ \nu\ } (A/\langle (\tau, -1) \rangle)^{(2)}.
			\]
			\item Let $\tau\neq \tau'\in A_2$. Then $V_{\tau, \tau'}$ is a $\mathrm{K}3$ surface isomorphic to $\Km^{\tau}(A/\langle \tau+\tau'\rangle)$, and $\nu$ restricts to a resolution
			\[
			V_{\tau, \tau'}=\Km^{\tau}(A/\langle \tau+\tau'\rangle) \xrightarrow{\ \nu \ } A/\langle (\tau, -1), (\tau', -1)\rangle. 
			\] 
			\item For $\tau$ varying in $A_2$ we obtain $16$ distinct codimension $2$ submanifolds $W_{\tau}$ of~$K^3(A)$. For any $\tau\neq \tau'$, the submanifolds $W_{\tau} $ and $W_{\tau'}$ intersect transversely in~$V_{\tau, \tau'}$. For~$\tau, \tau', \tau''$ pairwise distinct, $W_{\tau}, W_{\tau'}, W_{\tau''}$ meet transversely in $4$ points, while $4$ or more distinct components $W_{\tau}$ have empty intersection.
		\end{enumerate} 
	\end{proposition}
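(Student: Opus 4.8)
The plan is to work throughout with the concrete model $K^3(A)$ and to recognise $W_\tau$ and $V_{\tau,\tau'}$ as the top-dimensional components of fixed loci of the involutions in $G$. Writing $\sigma_\tau := (\tau,-1)$ for the involution $a\mapsto -a+\tau$, the identity $a+(-a+\tau)+b+(-b+\tau)=2\tau=0$ shows that the locus in Definition \ref{def:relevantSubvarieties} defining $W_\tau$ consists of $\sigma_\tau$-invariant subschemes; thus $W_\tau$ is the four-dimensional component of $\mathrm{Fix}(\sigma_\tau)\subset K^3(A)$, with general point a reduced subscheme $\{a,-a+\tau,b,-b+\tau\}$ on which $\sigma_\tau$ acts freely. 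Since $\sigma_\tau\sigma_{\tau'}$ is the translation $t_{\tau+\tau'}$, the group $\langle\sigma_\tau,\sigma_{\tau'}\rangle$ is a Klein four-group, and $V_{\tau,\tau'}$ is the two-dimensional component of its fixed locus, with general point a single free orbit $\{a,a+\tau+\tau',-a+\tau,-a+\tau'\}$. A mechanism I will use repeatedly: a reduced invariant subscheme supported entirely on $A_{2,\tau}=\mathrm{Fix}(\sigma_\tau)\subset A$ is an \emph{isolated} fixed point of $\sigma_\tau$ on $K^3(A)$, because there $\sigma_\tau$ acts by $-\mathrm{id}$ on the whole tangent space $\bigoplus_i T_{p_i}A$ and so leaves no invariant tangent direction; such a point therefore lies off $W_\tau$.

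For (i) I would use the double cover $\pi\colon A\to \bar A:=A/\langle\sigma_\tau\rangle$ branched over $A_{2,\tau}$, whose minimal resolution is $\Km^\tau(A)$. The assignment $\{\bar p,\bar q\}\mapsto \pi^{-1}(\bar p)+\pi^{-1}(\bar q)$ identifies the reduced locus of $\bar A^{(2)}$ with that of $W_\tau$, and under it $\nu$ restricts to a resolution $W_\tau\to\bar A^{(2)}$. On the other hand the Hilbert--Chow morphism of $\Km^\tau(A)$ gives a resolution $\Km^\tau(A)^{[2]}\to\bar A^{(2)}$. As $\sigma_\tau$ preserves the holomorphic symplectic form of $A$, the surface $\bar A$ is symplectic with $A_1$-singularities and $\bar A^{(2)}$ is a symplectic fourfold; both maps are projective symplectic (crepant) resolutions, hence isomorphic over $\bar A^{(2)}$, yielding $W_\tau\cong\Km^\tau(A)^{[2]}$ compatibly with $\nu$. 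Part (ii) is the same argument through the intermediate isogeny $A\to A':=A/\langle t_{\tau+\tau'}\rangle$: the involution $\sigma_\tau$ descends to $A'$ because it normalises $\langle t_{\tau+\tau'}\rangle$, one has $A/\langle\sigma_\tau,\sigma_{\tau'}\rangle=A'/\langle\bar\sigma_\tau\rangle$, and the free orbits parametrising $V_{\tau,\tau'}$ are exactly the preimages in $A$ of the points of $A'/\langle\bar\sigma_\tau\rangle$; resolving the sixteen nodes gives $V_{\tau,\tau'}\cong\Km^\tau(A')$ with $\nu$ as its resolution map.

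For (iii), distinctness of the sixteen $W_\tau$ is immediate since the $\sigma_\tau$ are distinct involutions. At its general point $W_\tau\cap W_{\tau'}$ is the fixed locus of the Klein four-group, whose top component is $V_{\tau,\tau'}$; the extra configurations are supported on a branch locus and are discarded by the mechanism above, so the intersection is exactly $V_{\tau,\tau'}$. Transversality I would check on $T_pK^3(A)=\{(v_1,\dots,v_4)\in\bigoplus_i T_{p_i}A:\sum_i v_i=0\}$ at a general $p\in V_{\tau,\tau'}$, where the group permutes the four summands by its free action on the support while each $\sigma_\bullet$ acts by $-\mathrm{id}$ on tangent directions (as $d(\tau,-1)=-\mathrm{id}$); a short computation shows the simultaneous $(-1)$-eigenspace vanishes, so $T_pW_\tau+T_pW_{\tau'}=T_pK^3(A)$. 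For three pairwise distinct indices the group $\langle\sigma_\tau,\sigma_{\tau'},\sigma_{\tau''}\rangle$ has order $8$ and contains the free translation group $T=\langle t_{\tau+\tau'},t_{\tau+\tau''}\rangle\cong(\ZZ/2\ZZ)^2$; since $T$ acts freely, an invariant length-four subscheme is a single reduced orbit $a+T$ with $2a\in\{\tau,\tau',\tau'',\tau+\tau'+\tau''\}$, and precisely the four orbits with $2a=\tau+\tau'+\tau''$ avoid all three branch loci $A_{2,\tau},A_{2,\tau'},A_{2,\tau''}$, the remaining twelve being isolated fixed points of one $\sigma$ and hence off the corresponding $W$. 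On each surviving orbit the simultaneous $(+1)$-eigenspace is already contained in the $T$-invariants, which force $v_1=\dots=v_4$ and then vanish by $\sum_i v_i=0$; so the three tangent spaces meet only in $0$, giving four transverse points. Finally, for four or more distinct indices the translation group either has order $8$ — whence every orbit is too long to have length $4$ — or degenerates precisely when $\tau'''=\tau+\tau'+\tau''$, in which case the candidate points satisfy $2a=\tau'''$ and lie on $A_{2,\tau'''}$, off $W_{\tau'''}$; either way the intersection is empty.

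The step I expect to be most delicate is the resolution statement in (i): identifying $\nu|_{W_\tau}$ with $\Km^\tau(A)^{[2]}\to\bar A^{(2)}$ along the non-reduced loci — the diagonal of $\bar A^{(2)}$ and the images of the branch points — requires either an explicit local model for the Hilbert--Chow morphism of $K^3(A)$ transverse to $W_\tau$, or the input that a projective symplectic resolution of $\bar A^{(2)}$ is unique. Everything else reduces to the elementary representation theory of the finite $2$-groups acting on the supports of the relevant subschemes.
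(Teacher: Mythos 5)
The paper itself does not prove this Proposition: it is explicitly quoted from \cite[\S2]{floccariKum3} (``We summarize in the next two Propositions several results obtained in...''), so your self-contained fixed-locus argument is necessarily a different route from the text, though it is close in spirit to the cited reference. Your identification of $W_{\tau}$ and $V_{\tau,\tau'}$ with the top-dimensional components of $\mathrm{Fix}(\sigma_{\tau})$ and $\mathrm{Fix}(\langle\sigma_{\tau},\sigma_{\tau'}\rangle)$ is correct and consistent with Proposition~\ref{prop:groupAction}, and the tangent-space computations in (iii) — transversality of $W_{\tau}$ and $W_{\tau'}$ via the vanishing of the simultaneous $(-1)$-eigenspace, and of the triple intersections via the vanishing of the $T$-invariants inside $\ker(\sum_i v_i)$ — are right; the count of the four triple points (the four $T$-orbits with $2a=\tau+\tau'+\tau''$) and the emptiness of quadruple intersections also check out.

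Two points need more care than you give them. First, in (i) you cannot fall back on ``uniqueness of projective symplectic resolutions'': crepant symplectic resolutions of a fixed normal symplectic variety are in general only related by flops (Mukai flops already give non-isomorphic projective examples), so the identification of $\nu|_{W_{\tau}}$ with the resolution $\Km^{\tau}(A)^{[2]}\to (A/\langle\sigma_{\tau}\rangle)^{(2)}$ has to be made by the explicit route you list as the alternative, e.g.\ by exhibiting the natural morphism sending a length-$2$ subscheme of $\Km^{\tau}(A)$ to the pull-back of its image along the double cover $A\to A/\langle\sigma_{\tau}\rangle$ and checking it lands in $W_{\tau}$; you correctly flag this as the delicate step, but only one of your two proposed fixes is actually available. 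Second, your discarding ``mechanism'' only treats \emph{reduced} invariant subschemes supported on $A_{2,\tau}$, whereas $\mathrm{Fix}(\langle\sigma_{\tau},\sigma_{\tau'}\rangle)$ also contains non-reduced invariant subschemes of the form $2a+2b$ with $2a\in\{\tau,\tau'\}$ and $b=a+\tau+\tau'$; these \emph{do} lie on both $W_{\tau}$ and $W_{\tau'}$ and sweep out one-dimensional families, so to conclude that $W_{\tau}\cap W_{\tau'}=V_{\tau,\tau'}$ set-theoretically you must further check that they are exactly the sixteen exceptional curves $R_{\tau,\tau',\beta}\subset V_{\tau,\tau'}$ rather than extra components. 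Neither issue breaks the strategy, but both require an argument beyond what you wrote.
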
 

	Consider the group $\Aut_0(K^3(A)) = A_4 \rtimes \langle -1 \rangle$, with its subgroups $G=A_2\times \langle -1 \rangle$ and $\Gamma= A_4$ as in \S\ref{subsec:aut0}. The submanifolds introduced above arise from the study of the fixed loci of these automorphisms.
	\begin{proposition} \phantomsection \label{prop:groupAction}
		\begin{enumerate}[label=(\roman*)]
			\item Any automorphism $g\in G$ restricts to an automorphism of each of the $W_{\tau}$ and of the $V_{\tau, \tau'}$. An automorphism $\gamma_{\epsilon} \in \Gamma$ induced by translation by $\epsilon \in A_4$ restricts to isomorphisms $$W_{\tau}\xrightarrow{\ \sim \ } W_{\tau+2\epsilon} \ \ \ \text{ and } \ \ \ V_{\tau, \tau' } \xrightarrow{\ \sim \ } V_{\tau+2\epsilon, \tau' +2\epsilon }. $$
			\item The manifold $W_{\tau}$ is the unique positive-dimensional component of the fixed locus of the automorphism $(\tau, -1)\in G$. The induced action of $G/ \langle (\tau, -1) \rangle \cong (\ZZ/2\ZZ)^4 $ on $W_{\tau}$ is identified with the natural action of $A_2$ on $\Km^{\tau}(A)^{[2]}=W_{\tau}$.
			\item For $\tau\neq 0\in A_2$, the fixed locus of the automorphism $\tau\in G$ is the disjoint union of the $8$ $\mathrm{K}3$ surfaces $V_{\tau -\theta, \theta}$, for $\theta\in A_2$. The stabilizer in $G$ of $V_{\tau, \tau'}$ is the subgroup~$\langle (\tau, -1), (\tau', -1) \rangle \cong (\ZZ/2\ZZ)^2$.
		\end{enumerate} 
	\end{proposition}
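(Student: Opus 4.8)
The plan is to reduce every assertion to Beauville's explicit model $K^3(A)$ and to compute directly with multisets on $A^{(4)}$. Every automorphism in $\Aut_0(K^3(A))=A_4\rtimes\langle -1\rangle$ is induced by an affine map $a\mapsto \pm a+c$ of the torus with $c\in A_4$; the conditions $4c=0$ (for a translation) and the change of sign both preserve the fibre $A_0^{(4)}=\Sigma^{-1}(0)$, so these maps descend to $A_0^{(4)}$ and lift uniquely through the Hilbert--Chow resolution $\nu\colon K^3(A)\to A_0^{(4)}$. Since $\nu$ is an isomorphism over the locus of reduced length-$4$ subschemes and carries strict transforms to strict transforms, all the set-theoretic claims can be checked on the defining multisets of $W_\tau$ and of $V_{\tau,\tau'}$, the only delicate point being the behaviour over the diagonal, which is governed by Proposition \ref{prop:relevantSubvarieties}.

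For $(i)$ I would argue by direct substitution. Writing the generator of $W_\tau$ as $\{a,b,-a+\tau,-b+\tau\}$ and that of $V_{\tau,\tau'}$ as $\{a,a+\tau+\tau',-a+\tau,-a+\tau'\}$, and using repeatedly that $2\sigma=2\tau=2\tau'=0$ in $A_2$, one checks that the translation by $\sigma\in A_2$ returns these multisets to themselves after the reparametrisation $a\mapsto a+\sigma$, and that $(\sigma,-1)$ does so after $a\mapsto -a+\sigma$; hence every $g\in G$ preserves each $W_\tau$ and each $V_{\tau,\tau'}$. The same computation for the translation $\gamma_\epsilon$ by $\epsilon\in A_4$ shows that the defining parameter is shifted by $2\epsilon\in A_2$, yielding the isomorphisms $W_\tau\xrightarrow{\sim}W_{\tau+2\epsilon}$ and $V_{\tau,\tau'}\xrightarrow{\sim}V_{\tau+2\epsilon,\tau'+2\epsilon}$. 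For $(ii)$ I would classify the configurations fixed by the involution $(\tau,-1)\colon a\mapsto -a+\tau$: apart from its $16$ fixed points $A_{2,\tau}$, an invariant length-$4$ configuration summing to $0$ is either a union of two genuine $2$-orbits $\{a,-a+\tau\}\sqcup\{b,-b+\tau\}$, the $4$-dimensional family whose strict transform is $W_\tau$, or else it meets $A_{2,\tau}$ and thus lies on the diagonal of $A^{(4)}$. By Proposition \ref{prop:relevantSubvarieties}$(i)$, $W_\tau\cong\Km^\tau(A)^{[2]}$ with $\nu$ restricting to the Hilbert--Chow resolution, and the local analysis of \cite{floccariKum3} underlying that statement shows that the diagonal configurations yield no further positive-dimensional component; so $W_\tau$ is the unique one. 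The induced action is then bookkeeping: $(\tau,-1)$ fixes $W_\tau$ pointwise, the translations by $A_2$ commute with $(\tau,-1)$ and descend to $\Km^\tau(A)=\mathrm{Bl}(A/\langle(\tau,-1)\rangle)$ and hence to its Hilbert square, and every class in $G/\langle(\tau,-1)\rangle\cong(\ZZ/2\ZZ)^4$ has a unique translation representative, so $G$ acts on $W_\tau$ through the natural $A_2$-action.

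For $(iii)$ the translation $\tau$ (with $\tau\neq 0$) is fixed-point-free on $A$, so an invariant configuration summing to $0$ is a union of two $\tau$-orbits $\{a,a+\tau\}\sqcup\{b,b+\tau\}$ with $a+b\in A_2$; writing $b=-a+\theta$ with $\theta\in A_2$ identifies each such $2$-dimensional family with the support of $V_{\tau+\theta,\theta}=V_{\tau-\theta,\theta}$. Since $\theta$ and $\theta+\tau$ give the same surface, one obtains exactly $8$ of the K3 surfaces $V_{\tau-\theta,\theta}$; the degenerate configurations with a repeated orbit again lie on the diagonal and are absorbed into these via Proposition \ref{prop:relevantSubvarieties}, and their disjointness follows from the intersection pattern in Proposition \ref{prop:relevantSubvarieties}$(iii)$. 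To compute the stabiliser I would determine which $g\in G$ act trivially on $V_{\tau,\tau'}$: substituting into $\{a,a+\tau+\tau',-a+\tau,-a+\tau'\}$ and using $2\tau=2\tau'=0$ shows that $(\tau,-1)$ and $(\tau',-1)$, hence their product $\tau+\tau'$, fix every point, whereas a translation by $\sigma$ acts as $a\mapsto a+\sigma$ and a reflection $(\sigma,-1)$ moves the parameter unless $\sigma\in\{\tau,\tau'\}$; this isolates the order-$4$ group $\langle(\tau,-1),(\tau',-1)\rangle$. Here, consistently with $(i)$, "stabiliser" means the subgroup acting trivially on $V_{\tau,\tau'}$, i.e.\ those $g$ with $V_{\tau,\tau'}\subseteq (K^3(A))^g$, since all of $G$ already preserves $V_{\tau,\tau'}$ as a set.

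The main obstacle is the same throughout: passing from the clean multiset computations on $A_0^{(4)}$ to statements about the resolution $K^3(A)$, that is, controlling the fixed loci of $(\tau,-1)$ and of the translation $\tau$ on the exceptional divisors of $\nu$ over the diagonal and ruling out spurious positive-dimensional components. This is precisely the content of the local computations of \cite{floccariKum3}, which are already packaged into Proposition \ref{prop:relevantSubvarieties} and may therefore be invoked.
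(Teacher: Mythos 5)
The paper does not actually prove this proposition: it is stated as a summary of results from \cite[\S2]{floccariKum3}, so there is no internal argument to compare against. Your direct verification by multiset computations on $A_0^{(4)}$ is the natural route and is essentially correct: the substitution checks in (i), the orbit analysis in (ii) and (iii), the identification of an invariant configuration $\{a,a+\tau,-a+\theta,-a+\theta+\tau\}$ with a point of $V_{\tau-\theta,\theta}$ and the resulting count of $16/2=8$ surfaces, the disjointness via the intersection pattern of the $W_\sigma$, and the reading of ``stabilizer'' as the pointwise stabilizer (forced by consistency with part (i)) all check out.

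Two caveats. First, in (ii) your dichotomy ``two genuine $2$-orbits, or else it meets $A_{2,\tau}$ and thus lies on the diagonal'' is not quite right: a configuration of four pairwise distinct points of $A_{2,\tau}$ summing to zero is invariant, reduced, and off the diagonal. These account for (part of) the $140$ isolated fixed points used later in the proof of Proposition \ref{prop:Ginvariants}; they form a finite set, so the conclusion that $W_\tau$ is the unique positive-dimensional component is unaffected, but the case must be listed. (By contrast, the case of one $2$-orbit plus two points of $A_{2,\tau}$ is correctly absorbed: $2\alpha_i=\tau$ forces $\alpha_1+\alpha_2=\tau+(\alpha_1-\alpha_2)$, so the zero-sum condition gives $\alpha_1=\alpha_2$ and the configuration lies in the closure of the $W_\tau$-family.) Second, the analysis over the exceptional divisor of $\nu$ --- ruling out positive-dimensional fixed components inside the punctual fibres and checking that the isolated invariant configurations contribute only isolated fixed points upstairs --- is not contained in Proposition \ref{prop:relevantSubvarieties}, which only describes $W_\tau$, $V_{\tau,\tau'}$ and their mutual intersections; it genuinely requires the local computations of \cite[\S2]{floccariKum3}. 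Since the paper itself delegates the entire proposition to that reference, this deferral is acceptable, but it should be cited as such rather than presented as a consequence of Proposition \ref{prop:relevantSubvarieties}.
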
 
	\begin{remark}\label{rmk:deformations}
		The submanifolds $\iota_\tau\colon W_{\tau}\to K^3(A)$ deform with $K^3(A)$.
		In fact, let~$\mathcal{K}\to B$ be a family of manifolds of $\mathrm{Kum}^3$-type with $ 0\in B$ such that $\mathcal{K}_0 = K^3(A)$ for some $2$-dimensional complex torus $A$. The groups $\Aut_0(\mathcal{K}_b)$ yield a local system over~$B$; up to a finite \'etale base-change, we have a well-defined fibrewise action of $\Aut_0(K^3(A))$ on~$\mathcal{K} \to B$. For each $\tau\in A_2$, a component of the fixed locus $\mathcal{K}^{(\tau,-1)}$ is a subfamily~$\mathcal{W}_{\tau}\subset \mathcal{K}$ of manifolds of $\mathrm{K}3^{[2]}$-type over $B$, with $\mathcal{W}_{\tau,0}=W_{\tau}$. 
		This implies that the pull-back map $\iota_{\tau}^*\colon H^2(K^3(A), \ZZ)\to H^2(W_{\tau}, \ZZ)$ is injective. Indeed, considering a deformation $\mathcal{K}\to B$ as above with generic element of Picard rank~$0$, we see that the pull-back $\iota^*_{\tau}$ must be either zero or injective, since $H^2(\mathcal{K}_b,\QQ)$ is an irreducible Hodge structure for very general $b$ and the integral second cohomology of $\mathrm{Kum}^3$-manifolds is torsion-free. But this map cannot be zero, as the restriction of a symplectic form on~$K^3(A)$ gives a symplectic form on $W_{\tau}$. Similar statements hold for the K3 surfaces $\iota_{\tau, \tau'}\colon V_{\tau,\tau'}\to K^3(A)$. 
		\end{remark}
	
	\subsection{Divisors on the canonical submanifolds} 
	For $\tau\neq\tau'\in A_2$ we have the identification $V_{\tau, \tau'} = \Km^{\tau} (A/\langle \tau+\tau'\rangle)$; thus $V_{\tau, \tau'}$ contains $16$ disjoint rational curves.
	
	\begin{definition}\label{def:curvesR}
		For $\beta\in (A_{2,\tau}\sqcup A_{2,\tau'})/\langle \tau+\tau'\rangle$, we let $R_{\tau,\tau',\beta}\subset V_{\tau,\tau'}$ be the rational curve lying over the node $\beta$ of $ A/\langle (\tau,-1), (\tau', -1)\rangle$. We let~$r_{\tau,\tau',\beta}\in H^2(V_{\tau,\tau'}, \ZZ)$ be the fundamental class of $R_{\tau,\tau',\beta}$.
	\end{definition}

	Let $\tau\in A_{2}$.
	By Proposition \ref{prop:relevantSubvarieties} we have $W_{\tau} = \Km^\tau (A)^{[2]}$ and $\nu$ factors through $$\bar{\nu} \colon \Km^{\tau} (A)^{[2]}\to (A/\langle (\tau,-1)\rangle)^{(2)},$$ where the latter is immersed in $A^{(4)}_0$ via $(a,b )\mapsto (a,b,-a+\tau, -b+\tau)$. The morphism $\bar{\nu}$ introduces $17$ exceptional divisors on $W_{\tau}$.
	
	\begin{definition} \phantomsection \label{def:divisors}
		Fix $\tau\in A_2$.
		\begin{itemize}
			\item  For any $\alpha\in A_{2,\tau}$ we let $S_{\tau, \alpha}\subset W_{\tau}$ be the exceptional component of $\bar{\nu}$ lying over $
	\{ (\alpha, b), \ b\in A \} \subset (A/\langle (\tau,-1)\rangle)^{(2)}
	$; we let $s_{\tau,\alpha}\in H^2(W_{\tau},\ZZ)$ be its fundamental class.
	\item We let $D_{\tau}\subset W_{\tau}$ be the divisor of non-reduced subschemes, which is the exceptional component of $\bar{\nu}$ over~$
	\{ (a, a) , \ a \in A\} \subset (A/\langle (\tau,-1)\rangle)^{(2)}
    $. We let $\delta_\tau\in H^2(W_{\tau},\ZZ)$ be half the fundamental class of $D_{\tau}$.
	\end{itemize}
    \end{definition}

	By \cite[Proposition 6]{beauville1983varietes}, the class $\delta_{\tau}$ is integral. Moreover, denoting by $\pi^{\tau}\colon A\dashrightarrow \Km^{\tau}(A)$ the natural rational map of degree~$2$, we have (up to finite index, see \cite[Chapter 3, \S2.5]{huyK3}) 
	\begin{align*}
		H^2(W_{\tau}, \ZZ) 
		& = \pi^{\tau}_* (H^2(A,\ZZ)) \oplus \bigoplus_{\alpha\in A_{2,\tau}} \ZZ\cdot s_{\tau, \alpha} \oplus \ZZ\cdot \delta_{\tau}.
	\end{align*}
	With respect to the Beauville-Bogomolov form on $W_{\tau}$, the classes $\delta_{\tau}$ and~$s_{\tau, \alpha}$, $\alpha\in A_{2,\tau}$, are pairwise orthogonal and have square $-2$.
	  \begin{remark}\label{rmk:actionDivisors}
		Let $\epsilon\in A_4$. Translation by $\epsilon$ on $K^3(A)$ sends $D_{\tau}$ to $D_{\tau+2\epsilon}$ and $S_{\tau,\alpha}$ to~$S_{\tau+2\epsilon, \alpha+\epsilon}$, for any $\tau\in A_2$ and $\alpha\in A_{2,\tau}$.
	\end{remark}
	
	We will now make some further observations about the geometry of the submanifolds introduced above. 
	
	\begin{proposition}Let $\tau\neq \tau'\in A_2$ and consider $V_{\tau, \tau'}\subset W_{\tau}$. \label{prop:geometricInput1}
		\begin{enumerate}[label=(\roman*)]
			\item Restriction from $W_{\tau}$ to $V_{\tau, \tau'}$ gives in $H^2(V_{\tau, \tau'}, \ZZ)$:
			\begin{align*}
				{\delta_{\tau}}_{|_{V_{\tau, \tau'}}} = \frac{1}{2} \sum_{\gamma\in A_{2,\tau'}/\langle \tau+\tau'\rangle} r_{\tau, \tau', \gamma}, \ \ \ \ \text{ and } \ \ \ \ 
				{s_{\tau, \alpha}}_{|_{V_{\tau, \tau'}}} = r_{\tau, \tau', \bar{\alpha}} \ \ \text{for any } \alpha\in A_{2,\tau},
			\end{align*}
			where $\bar{\alpha}\in A_{2,\tau}/\langle \tau+\tau'\rangle$ denotes the image of $\alpha$.
			\item Let $N_{V_{\tau, \tau'}|W_{\tau}}$ be the normal bundle of $V_{\tau,\tau'}\subset W_{\tau}$. We have $$\deg\left(c_2\bigl(N_{V_{\tau, \tau'}|W_{\tau}}\bigr)\right)=12.$$
		\end{enumerate} 
	\end{proposition}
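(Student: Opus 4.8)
Throughout, write $V\coloneqq V_{\tau, \tau'}$ and $K\coloneqq \Km^\tau(A)$, so that $W_\tau = K^{[2]}$. The plan is to exploit that $V$ is a component of the fixed locus of a natural involution of $W_\tau$, which feeds both parts. By Proposition~\ref{prop:groupAction}, the automorphism of $W_\tau = K^{[2]}$ induced by $\tau+\tau'\in G$ is the involution $\iota^{[2]}$ induced on the Hilbert scheme by the translation $\iota$ of $K$ by $\tau+\tau'$. This $\iota$ descends from $A$ because $\tau+\tau'\in A_2$ commutes with $(\tau,-1)$, and it is a symplectic (Nikulin) involution: its fixed points are the $8$ smooth points of $K$ that are the images of $A_{2,\tau'}$, and at each of them $d\iota = -\mathrm{id}$. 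The surface $V$ is the unique $2$-dimensional component of $\fixe(\iota^{[2]})$, whose generic point is a reduced pair $\{p, \iota(p)\}$ with $p\notin \fixe(\iota)$. Among the $16$ rational curves $R_{\tau,\tau',\beta}$ of $V$, those with $\beta$ in the $A_{2,\tau}$-part arise from the exceptional curves $C_\alpha$ of $K$, while those with $\beta$ in the $A_{2,\tau'}$-part sit over the $8$ fixed points of $\iota$.

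For part~(i) I would first identify the intersections set-theoretically and then compute multiplicities locally. Since $S_{\tau,\alpha}$ is the locus of subschemes meeting the exceptional curve $C_\alpha$ over the node $\alpha\in A_{2,\tau}$, intersecting with the pairs $\{p,\iota(p)\}$ parametrizing $V$ shows that $S_{\tau,\alpha}\cap V = R_{\tau,\tau',\bar\alpha}$; in the product chart of $K^{[2]}$ around a reduced pair, $S_{\tau,\alpha}$ restricts to the reduced divisor $\{p\in C_\alpha\}$, so the intersection is transverse and ${s_{\tau,\alpha}}_{|_V} = r_{\tau,\tau',\bar\alpha}$. The divisor $D_\tau$ of non-reduced subschemes meets $V$ exactly along the $8$ curves $R_{\tau,\tau',\gamma}$ lying over the fixed points of $\iota$. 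Choosing linearizing coordinates near such a fixed point so that $\iota$ acts by $u\mapsto -u$, and passing to the blow-up coordinates $(c,d)$ on $K^{[2]}$ in which $\iota^{[2]}$ acts by $(c,d)\mapsto(-c,-d)$, one finds $V = \{c=0\}$ while $D_\tau$ is the exceptional divisor, and these meet with multiplicity $1$. As $\delta_\tau = \tfrac12 [D_\tau]$, this yields the asserted ${\delta_\tau}_{|_V} = \tfrac12 \sum_\gamma r_{\tau,\tau',\gamma}$.

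For part~(ii) I would avoid computing the cohomology class of $V$ and instead apply the holomorphic Lefschetz fixed point formula to $\iota^{[2]}$. Because $\iota^{[2]}$ is symplectic, it acts as the identity on $H^0(W_\tau,\strct)$, $H^2(W_\tau,\strct)$ and $H^4(W_\tau,\strct)$, so its holomorphic Lefschetz number equals $1+1+1 = 3$. On the geometric side $\fixe(\iota^{[2]})$ consists of $V$ together with the $\binom{8}{2} = 28$ reduced pairs of distinct fixed points of $\iota$; at each such point $d\iota^{[2]} = -\mathrm{id}$ on the full tangent space and the Atiyah--Bott contribution is $\tfrac{1}{16}$, while along $V$ the involution acts as $-\mathrm{id}$ on the rank-$2$ normal bundle $N\coloneqq N_{V|W_\tau}$. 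Expanding the contribution $\int_V \mathrm{td}(T_V)\cdot\big((1+e^{-x_1})(1+e^{-x_2})\big)^{-1}$, with $x_1, x_2$ the Chern roots of $N$, and using $c_1(T_V) = 0$ together with $\int_V c_2(T_V) = 24$, this term collapses to $\tfrac{1}{16}\int_V c_2(N) + \tfrac12$. The resulting identity $3 = \tfrac{28}{16} + \tfrac{1}{16}\int_V c_2(N) + \tfrac12$ then forces $\int_V c_2(N) = 12$.

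The main obstacle is the geometric bookkeeping underlying both parts: establishing that $\iota$ has precisely the eight fixed points with $d\iota = -\mathrm{id}$, that $V$ is the only positive-dimensional component of $\fixe(\iota^{[2]})$, and that the remaining fixed points are exactly the $28$ pairs. Once this fixed-point analysis and the two local multiplicity computations of part~(i) are secured, both the intersection-theoretic identifications and the Lefschetz computation are routine.
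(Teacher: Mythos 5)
Your proposal is correct, and the two parts compare differently with the paper's proof. For part~(i) you are essentially on the paper's route: both arguments reduce to the identification of $V_{\tau,\tau'}$ with the blow-up of the image $\bar{Z}_{\tau+\tau'}\cong \Km^{\tau}(A)/\iota$ of the graph of $\iota$ at the $8$ points lying under $A_{2,\tau'}$, together with the transversality of $S_{\tau,\alpha}$ with $V_{\tau,\tau'}$ at a generic reduced pair and the multiplicity-one intersection of $D_{\tau}$ with $V_{\tau,\tau'}$ along the exceptional curves; the paper phrases this through the diagram relating $\mathrm{Bl}_{\Delta}(\Km^{\tau}(A)^2)$ and $\Km^{\tau}(A)^{[2]}$ rather than through local coordinates, but the content is the same. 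For part~(ii) your route is genuinely different: the paper computes $\deg c_2\bigl(N_{V_{\tau,\tau'}|W_{\tau}}\bigr)=\int_{W_{\tau}}[V_{\tau,\tau'}]^2$ by pulling back to $\mathrm{Bl}_{\Delta}(\Km^{\tau}(A)^2)$, using $\rho^*[V_{\tau,\tau'}]=\pi^*[Z_{\tau+\tau'}]$ and the fact that the normal bundle of the graph is $T_{\Km^{\tau}(A)}$, so that $[Z_{\tau+\tau'}]^2=24$ and the degree-two cover yields $12$; you instead apply the holomorphic Lefschetz formula to the symplectic involution $\iota^{[2]}$. I checked your bookkeeping: the Lefschetz number is $1+1+1=3$, the $28$ isolated fixed points each contribute $\tfrac{1}{16}$ (consistent, via Propositions \ref{prop:relevantSubvarieties} and \ref{prop:groupAction}, with the $28$ points of $W_{\tau}\cap V_{\tau+\tau'-\theta,\theta}$ for $\theta\notin\{\tau,\tau'\}$), and the contribution of $V_{\tau,\tau'}$ is $\tfrac{1}{2}+\tfrac{1}{16}\int_{V_{\tau,\tau'}}c_2(N)$, forcing $\int_{V_{\tau,\tau'}}c_2(N)=12$. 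Your version trades Fulton's blow-up identity $\rho^*[V]=\pi^*[Z]$ for the fixed-point analysis of $\iota^{[2]}$, which you correctly single out as the point requiring care; it has the mild advantage of producing the answer from purely equivariant data, and as a by-product recovers the fixed-point structure used elsewhere in the paper, while the paper's argument is shorter once the graph description of $V_{\tau,\tau'}$ is in place.
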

	\begin{proof} 
		It is well-known that there is a commutative diagram
		\[
		\begin{tikzcd}
			\mathrm{Bl}_\Delta (\Km^{\tau}(A)^2) \arrow[swap]{d}{\rho} \arrow{r}{\pi} & \Km^{\tau}(A)^2 \arrow{d} \\
			\Km^{\tau}(A)^{[2]} \arrow{r}{\bar{\pi}} & \Km^{\tau}(A)^{(2)}
		\end{tikzcd} 
		\]
		where $\Delta\subset \Km^{\tau}(A)^2$ is the diagonal, $\pi$ is the blow-up map, and $\rho$ is a double cover branched over the exceptional divisor of $\pi$. The lower horizontal map $\bar{\pi}$ is the blow-up of the image $\bar{\Delta}$ of the diagonal in $\mathrm{Km}^\tau(A)^{(2)}$.		
		There is a natural action of $A_2$ on $\Km^{\tau}(A)$ and hence on $\Km^{\tau}(A)^{[2]}$; the inclusion of $V_{\tau, \tau'}$ into $W_{\tau}$ is induced by the morphism 
		\[
		\psi\colon \Km^{\tau}(A) \to \Km^{\tau}(A)^2 
		\]
		given by $s \mapsto (s, (\tau +\tau')(s))$. We denote by $Z_{\tau+\tau'}$ the image, i.e., the graph of the automorphism of $\Km^{\tau}(A)$ induced by translation by $\tau+\tau'$ on $A$.
		
		We prove $(i)$. Let $\bar{Z}_{\tau+\tau'}$ be the image of $Z_{\tau+\tau'}$ in $ \Km^{\tau}(A)^{(2)}$. Then the restriction $\bar{\pi}\colon V_{\tau, \tau'} \to \bar{Z}_{\tau+\tau'}$ is the blow-up of the intersection of $\bar{Z}_{\tau+\tau'}$ with~$\bar{\Delta}$, which consists of $8$ points parametrized by $A_{2,\tau'}/ \langle \tau+\tau' \rangle$. 
		Since $D_{\tau}\subset W_{\tau}$ is the exceptional divisor of~$\bar{\pi}$, we deduce that it restricts to $\sum_{\beta\in A_{2,\tau'}/\langle \tau+\tau'\rangle} r_{\tau, \tau', \beta}$; as $[D_{\tau}]=2\delta_{\tau}$, this gives the first assertion. 
		For the second assertion of $(i)$, note that $S_{\tau,\alpha}\subset \Km^{\tau}(A)^{[2]}$ parametrizes the subschemes whose support intersects the rational curve of $\Km^{\tau}(A)$ lying over the node corresponding to the image $\bar{\alpha}\in A_{2,\tau}/\langle \tau+\tau'\rangle $ of $\alpha\in A_{2,\tau}$. 
		The intersection of $\bar{\pi}(S_{\tau,\alpha})$ with $\bar{Z}_{\tau+\tau'}$ is disjoint from $\bar{\Delta}$, and it is easy to see in~$\Km^{\tau}(A)^{(2)}$ that the restriction of $\bar{\pi} (S_{\tau,\alpha})$ to $\bar{\pi}(V_{\tau,\tau'})$ is the curve $\bar{\pi}(R_{\tau,\tau',\bar{\alpha}})$.
		
		We now prove $(ii)$. The normal bundle of the graph $Z_{\tau+\tau'}\subset \Km^{\tau}(A)^2$ is isomorphic to the tangent bundle of the K3 surface $\Km^{\tau}(A)$ and hence, by \cite[Corollary 6.3]{fulton},
		$$ \int_{\Km^{\tau}(A)^2} [Z_{\tau+\tau'}]^2 = \deg \left(c_2\bigl(N_{Z_{\tau+\tau'}|\Km^{\tau}(A)^2}\bigr)\right) =24.$$
		Note that $Z_{\tau + \tau'}$ is transverse to $\Delta$ and it is stabilized by the involution of $\Km^{\tau}(A)^2$ which switches the two factors.  By \cite[Theorem 6.7]{fulton}, we have $\rho^*([V_{\tau,\tau'}]) = \pi^* ([Z_{\tau+\tau'}])$ in the cohomology of $\mathrm{Bl}_{\Delta}(\Km^{\tau}(A)^2)$, and therefore
		\begin{align*}
			\int_{W_{\tau}} [V_{\tau,\tau'}]^2 & = \frac{1}{2} \int_{\mathrm{Bl}_\Delta (\Km^{\tau}(A)^2)} \rho^*([V_{\tau, \tau'}]^2) 
			 = \frac{1}{2}\int_{\mathrm{Bl}_\Delta (\Km^{\tau}(A)^2)} \pi^*([Z_{\tau+\tau'}])^2
			=12;
		\end{align*}
		this self-intersection number coincides with the degree of $c_2(N_{V_{\tau, \tau'}|W_{\tau}})$.
	\end{proof}

	By \cite[Proposition 8]{beauville1983varietes}, we have 
	$$H^2(K^3(A),\ZZ)=H^2(A,\ZZ)\oplus \ZZ\cdot \xi, $$
	where $\xi$ has self-intersection $-8$ and $2\xi = [E]$ is the fundamental class of the exceptional divisor $E$ of the Hilbert-Chow morphism $\nu\colon K^3(A)\to A^{(4)}_0$. 
		
	\begin{proposition} \label{prop:geometricInput}
		 Let $\tau\neq \tau'\in A_2$ and consider $V_{\tau, \tau'}\subset W_{\tau}\subset K^3(A)$. We have:
			\begin{align*}
				H^2(W_{\tau}, \ZZ) \ni \phantom{\tau} \xi_{|_{W_{\tau}}} & =  2\delta_{\tau} + \frac{1}{2} \sum_{\alpha\in A_{2,\tau}} s_{\tau, \alpha},\\
				H^2(V_{\tau, \tau'},\ZZ) \ni \xi_{|_{V_{\tau,\tau'}}} & =  \sum_{\beta\in A_{2,\tau}/\langle \tau+\tau'\rangle} r_{\tau, \tau', \beta} + \sum_{\gamma\in A_{2,\tau'}/\langle \tau+\tau'\rangle} r_{\tau, \tau', \gamma}.
			\end{align*} 
	\end{proposition}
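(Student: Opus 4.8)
The plan is to establish the first identity by a local analysis of how the exceptional divisor $E$ of $\nu\colon K^3(A)\to A_0^{(4)}$ meets the submanifold $W_\tau$, and then to obtain the second identity formally by restricting the first one to $V_{\tau,\tau'}$ via Proposition \ref{prop:geometricInput1}(i). Since $[E]=2\xi$, it is enough to prove the equality of integral divisor classes $[E]_{|W_\tau}=2[D_\tau]+\sum_{\alpha\in A_{2,\tau}} s_{\tau,\alpha}$ on $W_\tau$; dividing by $2$ and using $[D_\tau]=2\delta_\tau$ then gives $\xi_{|W_\tau}=2\delta_\tau+\tfrac12\sum_\alpha s_{\tau,\alpha}$ (the right-hand side being integral, as $\tfrac12\sum_\alpha s_{\tau,\alpha}$ is the image of the even class $\tfrac12\sum_\alpha[C_\alpha]$ under $H^2(\Km^\tau(A))\to H^2(W_\tau)$).

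First I would pin down the support of $E\cap W_\tau$. A point of $W_\tau=\Km^\tau(A)^{[2]}$ is a length-$2$ subscheme $\{c,d\}$ of $\Km^\tau(A)$, whose preimage under $\pi^\tau$ is the length-$4$ subscheme $(a,-a+\tau,b,-b+\tau)$ of $A$. Inspecting the possible collisions shows that this subscheme is non-reduced precisely when the two orbits coincide, i.e. $a=b$ or $a+b=\tau$ (equivalently $c=d$), which gives $D_\tau$, or when one of $a,b$ lies in $A_{2,\tau}$, i.e. when $\{c,d\}$ meets one of the sixteen $(-2)$-curves $C_\alpha\subset\Km^\tau(A)$ resolving the node $\alpha$, which gives $S_{\tau,\alpha}$. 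Hence $E\cap W_\tau$ is supported on $D_\tau\cup\bigcup_\alpha S_{\tau,\alpha}$; that the multiplicities along the various $S_{\tau,\alpha}$ coincide also follows from $G$-invariance of $\xi_{|W_\tau}$, since $G/\langle(\tau,-1)\rangle\cong A_2$ permutes the $S_{\tau,\alpha}$ transitively by Proposition \ref{prop:groupAction}.

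The heart of the argument is the computation of these multiplicities, which I expect to be the main obstacle. Around a generic point of either stratum I would choose a local coordinate on $\Km^\tau(A)$, so that a nearby point of $W_\tau$ is a subscheme $\{[u'],[u'']\}$ with preimage $\{a+u',-a+\tau-u',a+u'',-a+\tau-u''\}$ in $A$. Near a generic point of $D_\tau$ the two centres $a\neq -a+\tau$ are distinct, so in $K^3(A)$ the non-reduced locus $E$ has two smooth branches there, namely collision near $a$ and collision near $-a+\tau$; because the involution $(\tau,-1)$ links the two clusters, each branch restricts to the same reduced divisor $\{u'=u''\}=D_\tau$ on $W_\tau$, so $E$ meets $W_\tau$ with multiplicity $2$ along $D_\tau$. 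Near a generic point of $S_{\tau,\alpha}$, by contrast, the collision occurs at the single fixed point $\alpha=-\alpha+\tau$, so there is a unique cluster: modelling $W_\tau$ locally as $\{w=0\}\times\widetilde{\CC^2_\eta/\pm}$ inside $\mathrm{Hilb}^2(\CC^2)=\CC^2_w\times\widetilde{\CC^2_\eta/\pm}$ and $E$ as the reduced exceptional divisor $\CC^2_w\times C$, with $C$ the $(-2)$-curve of the $A_1$-resolution, one sees that $E$ cuts out $\{0\}\times C=S_{\tau,\alpha}$ transversally, with multiplicity $1$. This yields $[E]_{|W_\tau}=2[D_\tau]+\sum_\alpha s_{\tau,\alpha}$ and hence the first identity.

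Finally, the second identity follows by restriction. Pulling back the first identity along $V_{\tau,\tau'}\subset W_\tau$ and inserting Proposition \ref{prop:geometricInput1}(i), namely ${\delta_\tau}_{|V_{\tau,\tau'}}=\tfrac12\sum_{\gamma\in A_{2,\tau'}/\langle\tau+\tau'\rangle} r_{\tau,\tau',\gamma}$ and ${s_{\tau,\alpha}}_{|V_{\tau,\tau'}}=r_{\tau,\tau',\bar\alpha}$, gives $2{\delta_\tau}_{|V_{\tau,\tau'}}=\sum_{\gamma} r_{\tau,\tau',\gamma}$, while, since $\alpha\mapsto\bar\alpha$ is two-to-one from $A_{2,\tau}$ onto $A_{2,\tau}/\langle\tau+\tau'\rangle$, one has $\tfrac12\sum_{\alpha} {s_{\tau,\alpha}}_{|V_{\tau,\tau'}}=\sum_{\beta\in A_{2,\tau}/\langle\tau+\tau'\rangle} r_{\tau,\tau',\beta}$. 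Adding the two contributions produces exactly $\xi_{|V_{\tau,\tau'}}=\sum_{\beta} r_{\tau,\tau',\beta}+\sum_{\gamma} r_{\tau,\tau',\gamma}$, as claimed.
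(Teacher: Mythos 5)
Your proposal is correct, and it shares the paper's skeleton: identify the support of $E\cap W_{\tau}$ as $D_{\tau}\cup\bigcup_{\alpha}S_{\tau,\alpha}$, determine the multiplicities, and pass to $V_{\tau,\tau'}$ via Proposition \ref{prop:geometricInput1}(i). The two arguments diverge, however, at the one genuinely delicate point --- the multiplicity of $E$ along $D_{\tau}$ --- and consequently run in opposite logical order. You compute that multiplicity directly: near a generic point of $D_{\tau}$ the length-$4$ subscheme consists of two clusters exchanged by $(\tau,-1)$, so $E$ has two local branches, each tracing out $D_{\tau}$ with multiplicity one on the graph-like submanifold $W_{\tau}$; this gives the first identity outright, and the second then follows by restriction. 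The paper instead writes $E_{|_{W_{\tau}}}=kD_{\tau}+\sum_{\alpha}S_{\tau,\alpha}$ with $k$ unknown (only the multiplicity $1$ along the $S_{\tau,\alpha}$ is computed, by essentially your argument phrased via the blow-up description of $\nu$ over the locus of cycles with at least three distinct points), proves the identity on $V_{\tau,\tau'}$ \emph{first} by playing the two inclusions $V_{\tau,\tau'}\subset W_{\tau}$ and $V_{\tau,\tau'}\subset W_{\tau'}$ against each other --- the curves $R_{\tau,\tau',\gamma}$ with $\gamma\in A_{2,\tau'}/\langle\tau+\tau'\rangle$ arise from $D_{\tau}$ when seen from $W_{\tau}$, but from the $S_{\tau',\alpha'}$ when seen from $W_{\tau'}$, where their multiplicity is already known to be $2$ --- and only then back-solves $k=2$. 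Your route buys a self-contained proof of the first identity, at the price of having to justify the local model of $A^{[4]}$ near a cycle supported on two points and the transversality of each branch of $E$ with $W_{\tau}$ (this is the step a referee would ask you to write out in full); the paper's $\tau\leftrightarrow\tau'$ symmetry trick avoids any local analysis along $D_{\tau}$ entirely, at the cost of entangling the two displayed formulas. Both arguments are sound.
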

	\begin{proof}
	 The intersection $E\cap W_{\tau} $ is supported on the union of the divisors $S_{\tau, \alpha}$ and~$D_{\tau}$.
	 First, we observe that the restriction of $E$ to $W_{\tau}$ has multiplicity $1$ along each $S_{\tau,\alpha}$. 
	 Indeed, each $S_{\tau, \alpha}\subset W_{\tau}$ is generically a $\PP^1$-bundle over $\{ (\alpha, b, \alpha, -b+\tau), \ b\in A\} \subset A_0^{(4)}$, an open subset of which is contained in the locus $U$ of those $(a_1, a_2, a_3, a_4)\in A_0^{(4)}$ with at least three distinct points in their support. 
	 The restriction $\nu\colon \nu^{-1}(U)\to U$ is the blow-up of the big diagonal. Since $\nu(D_{\tau})\cap U=\emptyset$, the restriction of $\nu$ to $\nu^{-1}(U)\cap W_{\tau}$ is a blow-up with exceptional divisors the~$S_{\tau,\alpha}$. Hence, $E_{|_{W_{\tau}}} =kD_{\tau} + \sum_{\alpha\in A_{2,\tau}} S_{\tau,\alpha}$, for some~$k$.
	 
	 We now compute the restriction of $E$ to $V_{\tau, \tau'}$. It is supported over the $16$ curves $R_{\tau, \tau', \beta}$, for $\beta\in (A_{2, \tau} \sqcup A_{2, \tau'})/\langle \tau+\tau'\rangle$.
	 By the above and Proposition \ref{prop:geometricInput1}.$(i)$, the restriction of~$E$ to~${V_{\tau, \tau'}}$ has multiplicity $2$ at the curve $R_{\tau, \tau', \bar{\alpha}}$ for any $\bar{\alpha}\in A_{2,\tau}/ \langle \tau+\tau' \rangle$ (recall that~$S_{\tau, \alpha}$ and~$S_{\tau, \alpha+\tau+\tau'}$ restrict to the same curve $R_{\tau, \tau', \bar{\alpha}}$).
	 Switching the roles of $\tau$ and $\tau'$, we find $$\xi_{|_{V_{\tau, \tau'}}} = \sum_{\beta\in A_{2,\tau}/\langle \tau+\tau'\rangle} r_{\tau, \tau', \beta} + \sum_{\gamma\in A_{2,\tau'}/\langle \tau+\tau'\rangle} r_{\tau, \tau', \gamma} .$$
	 
	 Finally, combining this with Proposition \ref{prop:geometricInput1}.$(i)$, the multiplicity $k$ of $E\cap W_{\tau}$ along $D_{\tau}$ must be $2$; otherwise, $\xi$ could not restrict to the above class in $H^2(V_{\tau, \tau'}, \ZZ)$. 
	\end{proof}
	
	\begin{remark}\label{rmk:mixedIntersections}
		Let $\iota_{\tau}\colon W_{\tau}\hookrightarrow K^3(A)$ denote the inclusion.
		If $\nu\colon K^3(A)\to A^{(4)}_0$ is the Hilbert-Chow morphism, an explicit computation shows that
		$\nu(\iota_{\tau}(S_{\tau, \alpha}))\cap \nu(\iota_{\tau'}(S_{\tau', \alpha'}))=\emptyset$ and $\nu(\iota_{\tau}(D_{\tau}))\cap \nu(\iota_{\tau'}(D_{\tau'}))=\emptyset $, for any $\tau\neq\tau'\in A_{2}$. Thus, for any $\tau\neq\tau'$, we have
		\[
		\int_{K^3(A)} 
		\iota_{\tau, *}(s_{\tau,\alpha}) \cdot \iota_{\tau',*}(s_{\tau', \alpha'})=0 \ \ \ \ \text{and} \ \ \ \ 
		\int_{K^3(A)} 
		\iota_{\tau, *}(\delta_{\tau}) \cdot \iota_{\tau',*}(\delta_{\tau'})=0.
		\]
	\end{remark}
	\begin{remark}\label{rmk:normalBundles}
		The bundles in the exact sequence 
		\[
		0\to T_{W_{\tau}} \to {T_{K^3(A)}}_{|_{W_{\tau}}} \to N_{W_{\tau}| K^3(A)}\to 0
		\]
		have trivial first Chern class. Hence, $c_2\left(K^3(A)\right)_{|_{W_{\tau}}} = c_2 \left(W_{\tau}\right) + c_2 \left(N_{W_{\tau}| K^3(A)}\right)$, and 
		\begin{equation}\label{eq:c2NW}
			c_4\left(K^3(A)\right)_{|_{W_{\tau}}}  = c_4\left(W_{\tau}\right) + c_2\left(W_{\tau}\right) \cdot c_2\left(N_{W_{\tau}| K^3(A)}\right). 
		\end{equation}
		We also have an exact sequence
		$$0 \to N_{V_{\tau, \tau'}| W_{\tau}} \to {N_{V_{\tau, \tau'}|K^3(A)}} \to {N_{W_{\tau}|K^3(A)}}_{|_{V_{\tau,\tau'}}} \to 0  $$
		of bundles with trivial first Chern class.	
		Since $V_{\tau,\tau'}$ is the transverse intersection of~$W_{\tau}$ and~$W_{\tau'}$, the last term is identified with $N_{V_{\tau,\tau'}|W_{\tau'}}$. Moreover, translation by $\epsilon\in A_{2,\tau+\tau'}$ maps $V_{\tau,\tau'}$ to itself and exchanges~$W_{\tau}$ with~$W_{\tau'}$, inducing an isomorphism of vector bundles~$N_{V_{\tau,\tau'}|W_{\tau}}\cong N_{V_{\tau,\tau'}|W_{\tau'}}$. Via the normal bundle sequence for the embedding of~$V_{\tau, \tau' } $ into~$ K^3(A)$, we find
		\begin{align*}
			{c_2(K^3(A))}_{|_{V_{\tau, \tau'}}} & 
			 = c_2\bigl(V_{\tau, \tau'}\bigr)+ c_2\bigl( N_{V_{\tau, \tau'}| W_{\tau}}\bigr)+ c_2\left(N_{V_{\tau, \tau'}| W_{\tau'}}\right).
		\end{align*}
		Since $V_{\tau, \tau'}$ is a K3 surface, Proposition \ref{prop:geometricInput1}.(ii) implies that 
		\begin{equation} \label{eq:degreeC2}
			 \deg\left({c_2(K^3(A))}_{|_{V_{\tau, \tau'}}}\right) = 48.
		\end{equation} 
	\end{remark}
	
	\section{Canonical Hodge classes of degree four} \label{sec:4}
	 
	 	In this section we will show that canonical Hodge classes in $H^4(K,\QQ)$ are analytic, for any manifold $K$ of $\mathrm{Kum}^3$-type. As in the previous section, let $A$ be a complex torus of dimension $2$ and let $K^3(A)$ be the generalized Kummer sixfold on $A$. Recall its submanifolds~$W_{\tau}$ and $V_{\tau, \tau'}$ from Definition \ref{def:relevantSubvarieties}.
	 	\begin{definition}
			We denote by $w_{\tau}\in H^4(K^3(A),\ZZ)$ (resp.\ by $v_{\tau, \tau'}\in H^8(K^3(A), \ZZ)$) the fundamental class of $W_{\tau}\subset K^3(A)$ (resp.\ of $V_{\tau,\tau'}$), for $\tau\neq\tau'$ in $A_2$. 
	 	\end{definition}
 		Note that, for any $\tau\neq\tau'\in A_2$, we have $w_{\tau}\cdot w_{\tau'} = v_{\tau, \tau'}$, because $W_{\tau}$ and $W_{\tau'}$ meet transversely in $V_{\tau, \tau'}$ by Proposition \ref{prop:relevantSubvarieties}.$(iii)$. 
	 
	 	\begin{lemma}\label{lem:deformingW}
	 		The classes $w_{\tau}$ and $v_{\tau, \tau'}$ are canonical Hodge classes, and moreover they remain analytic on any deformation of $K^3(A)$.
	 	\end{lemma}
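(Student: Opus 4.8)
The plan is to prove two assertions: first that $w_\tau$ and $v_{\tau,\tau'}$ are canonical Hodge classes, and second that they stay analytic under deformation. For the canonicity, I would invoke Remark \ref{rmk:deformations}, which exhibits the submanifolds $W_\tau \hookrightarrow K^3(A)$ as fibres of a family $\mathcal{W}_\tau \subset \mathcal{K}$ of $\mathrm{K}3^{[2]}$-manifolds over the deformation base $B$ (after a finite \'etale base-change trivializing the $\Aut_0$ local system). Since the fundamental class of a submanifold moving in a family is preserved under parallel transport, the class $w_\tau \in H^4(K^3(A),\ZZ)$ deforms to the fundamental class of $\mathcal{W}_{\tau,b}$ on every fibre $\mathcal{K}_b$; in particular it remains a Hodge class on every deformation, so it is canonical by Definition \ref{def:canonical}. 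The same family argument handles $v_{\tau,\tau'}$: because $v_{\tau,\tau'} = w_\tau \cdot w_{\tau'}$ and the canonical Hodge classes form a subalgebra under cup-product (as recalled in \S\ref{subsec:LLV}), $v_{\tau,\tau'}$ is automatically canonical once $w_\tau$ and $w_{\tau'}$ are.

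For the analyticity, the key point is that on the special fibre $K^3(A)$ the classes are visibly algebraic: $w_\tau = [W_\tau]$ and $v_{\tau,\tau'} = [V_{\tau,\tau'}]$ are fundamental classes of actual submanifolds, hence analytic. The task is to upgrade ``analytic on $K^3(A)$'' to ``analytic on an arbitrary deformation $\mathcal{K}_b$''. Here I would again use that the $\mathcal{W}_\tau \to B$ form a proper smooth family of complex submanifolds of $\mathcal{K} \to B$, so that the relative fundamental class is represented fibrewise by the structure sheaf (or a Koszul resolution) of an honest analytic subvariety $\mathcal{W}_{\tau,b} \subset \mathcal{K}_b$. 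Consequently $w_\tau$ deforms to the Chern-class combination computing $[\mathcal{W}_{\tau,b}]$, which lies in the subalgebra generated by Chern classes of coherent sheaves; this is precisely the definition of analytic given in the Notation section. For $v_{\tau,\tau'}$ one argues identically using the subfamily $\mathcal{V}_{\tau,\tau'}$ cut out as a component of $\mathcal{W}_\tau \cap \mathcal{W}_{\tau'}$, or more simply observes that analytic classes are closed under cup-product so that $v_{\tau,\tau'} = w_\tau \cdot w_{\tau'}$ stays analytic wherever both factors do.

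The main obstacle is ensuring that the submanifolds genuinely deform as \emph{analytic} subvarieties across the whole base, rather than merely that their cohomology classes remain of type $(p,p)$. The content that makes this work is exactly the deformation-invariance of $\Aut_0$ (\cite[Theorem~2.1]{hassettTschinkel}) combined with the description in Remark \ref{rmk:definingG} of $W_\tau$ as a component of the fixed locus $K^{(\tau,-1)}$: fixed loci of a fibrewise automorphism of a smooth proper family are themselves smooth proper subfamilies, so $\mathcal{W}_\tau$ is a legitimate family of complex submanifolds and its fibres are all analytic. One subtlety to record is that passing to a finite \'etale cover of $B$ does not affect the conclusion, since analyticity and the Hodge property of a class on a fixed fibre are intrinsic to that fibre and are insensitive to base-change. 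With these observations the lemma follows; no delicate computation is required beyond citing the family constructions already assembled in Remark \ref{rmk:deformations} and Proposition \ref{prop:groupAction}.
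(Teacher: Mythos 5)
Your proposal is correct and follows essentially the same route as the paper, which simply observes that the lemma is an immediate consequence of Remark \ref{rmk:deformations}: the submanifolds $W_\tau$ (and hence their transversal intersections $V_{\tau,\tau'}$) deform along with $K^3(A)$ as components of fixed loci of the fibrewise $\Aut_0$-action, so their fundamental classes remain fundamental classes of submanifolds, hence Hodge and analytic, on every deformation. Your additional remarks on the finite \'etale base change and on $v_{\tau,\tau'}=w_\tau\cdot w_{\tau'}$ are consistent with what the paper leaves implicit.
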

 	\begin{proof}
 	 	The Lemma follows immediately from Remark \ref{rmk:deformations}.
 	 	\end{proof}
	
	The Proposition below will then imply that canonical Hodge classes of degree $4$ are analytic on any deformation of $K^3(A)$. 
	\begin{proposition} \label{prop:canonicalH4}
		Any canonical Hodge class in $H^4(K^3(A),\QQ)$ is a linear combination of the $17$ linearly independent classes $c_2$ and $w_{\tau}$, for $\tau\in A_2$.
	\end{proposition}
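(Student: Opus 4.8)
The plan is as follows. By the analysis of the LLV-decomposition in \S\ref{subsec:HodgeCanonical}, the space of canonical Hodge classes in $H^4(K^3(A),\QQ)$ has dimension $17$. Since $c_2$ is a Chern class and the $16$ classes $w_\tau$ ($\tau\in A_2$) are canonical Hodge classes by Lemma~\ref{lem:deformingW}, it suffices to prove that these $17$ classes are linearly independent. To detect linear relations I would pair against the degree-$8$ classes $w_\sigma\cdot w_{\sigma'}=v_{\sigma,\sigma'}$ (for distinct $\sigma,\sigma'\in A_2$) using the functionals $L_{\sigma,\sigma'}(\alpha)\coloneqq\int_{K^3(A)}\alpha\cdot w_\sigma\cdot w_{\sigma'}$, together with the generalized Fujiki constant $C(\cdot)$, which is linear on canonical Hodge classes since both sides of \eqref{eq:fujiki} are linear in $\omega$.

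The first step is to assemble the relevant triple intersection numbers. From Proposition~\ref{prop:relevantSubvarieties}.$(iii)$ one gets $\int_{K^3(A)}w_\tau\cdot w_\sigma\cdot w_{\sigma'}=4$ whenever $\tau,\sigma,\sigma'$ are pairwise distinct. When $\tau\in\{\sigma,\sigma'\}$, the self-intersection formula $w_\sigma^2=\iota_{\sigma,*}\,c_2(N_{W_\sigma|K^3(A)})$ gives $\int_{K^3(A)}w_\sigma^2\cdot w_{\sigma'}=\int_{V_{\sigma,\sigma'}}c_2(N_{W_\sigma|K^3(A)})$, which equals $\deg c_2(N_{V_{\sigma,\sigma'}|W_{\sigma'}})=12$ by the identification of normal bundles in Remark~\ref{rmk:normalBundles} and Proposition~\ref{prop:geometricInput1}.$(ii)$. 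Next, $\int_{K^3(A)}c_2\cdot w_\sigma\cdot w_{\sigma'}=\deg\bigl(c_2(K^3(A))|_{V_{\sigma,\sigma'}}\bigr)=48$ by \eqref{eq:degreeC2}. Finally I would compute $C(w_\tau)=12$: from $\xi|_{W_\tau}=2\delta_\tau+\tfrac12\sum_{\alpha\in A_{2,\tau}}s_{\tau,\alpha}$ (Proposition~\ref{prop:geometricInput}) and the orthogonality relations $q_{W_\tau}(\delta_\tau,\delta_\tau)=q_{W_\tau}(s_{\tau,\alpha},s_{\tau,\alpha})=-2$, one finds $q_{W_\tau}(\xi|_{W_\tau},\xi|_{W_\tau})=-16$; the Fujiki relation on the $\mathrm{K}3^{[2]}$-fourfold $W_\tau$ (whose Fujiki constant is $3$) then yields $\int_{K^3(A)}w_\tau\cdot\xi^4=\int_{W_\tau}(\xi|_{W_\tau})^4=3\cdot(-16)^2=768$, so that $C(w_\tau)=768/q(\xi,\xi)^2=768/64=12$.

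With these numbers the linear algebra is short. Suppose first that $\sum_\tau\mu_\tau w_\tau=0$, and write $S=\sum_\tau\mu_\tau$. Applying $L_{\sigma,\sigma'}$ gives
\[
0=4\sum_{\tau\neq\sigma,\sigma'}\mu_\tau+12(\mu_\sigma+\mu_{\sigma'})=4S+8(\mu_\sigma+\mu_{\sigma'})
\]
for every distinct pair; since $|A_2|\geq 3$, the constancy of $\mu_\sigma+\mu_{\sigma'}$ forces all $\mu_\tau$ to equal a common value $\mu$, whence $80\mu=0$ and $\mu=0$. Thus the $w_\tau$ are linearly independent. If instead $c_2=\sum_\tau\mu_\tau w_\tau$, the identical computation with $L_{\sigma,\sigma'}(c_2)=48$ again forces all $\mu_\tau=\mu$ and gives $48=80\mu$, i.e.\ $c_2=\tfrac35\sum_\tau w_\tau$; comparing Fujiki constants then yields the contradiction $288=C(c_2)=\tfrac35\cdot 16\cdot C(w_\tau)=\tfrac{576}{5}$. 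Hence $c_2$ lies outside the span of the $w_\tau$, the $17$ classes are linearly independent, and the Proposition follows.

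The main obstacle is the determination of the two intersection numbers $\int_{K^3(A)}w_\sigma^2\cdot w_{\sigma'}=12$ and the Fujiki constant $C(w_\tau)=12$; both rest on a careful bookkeeping of the normal bundles of the nested submanifolds $V_{\sigma,\sigma'}\subset W_\sigma\subset K^3(A)$ and of the restriction of $\xi$ to $W_\tau$, which is precisely the geometric content prepared in Propositions~\ref{prop:geometricInput1} and~\ref{prop:geometricInput} and in Remark~\ref{rmk:normalBundles}. Once these inputs are in place, the high symmetry of the configuration of the $W_\tau$ under the $\Gamma$-action renders the remaining argument purely combinatorial.
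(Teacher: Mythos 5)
Your proof is correct, and at the decisive step it takes a genuinely different route from the paper's. Both arguments reduce the Proposition to the linear independence of the $17$ classes via the dimension count of \S\ref{subsec:HodgeCanonical}, and both rest on the same geometric inputs (the numbers $w_\tau\cdot w_{\tau'}\cdot w_{\tau''}=4$, $w_\tau^2\cdot w_{\tau'}=12$, $c_2\cdot w_\tau\cdot w_{\tau'}=48$ and $C(w_\tau)=12$, which are exactly the content of Lemma~\ref{lem:firstIntersections}; your derivations of them, including $q_{W_\tau}\bigl(\xi_{|_{W_\tau}},\xi_{|_{W_\tau}}\bigr)=-16$ and the identification $N_{W_\sigma|K^3(A)}{}_{|_{V_{\sigma,\sigma'}}}\cong N_{V_{\sigma,\sigma'}|W_{\sigma'}}$, all check out). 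The paper, however, separates the coefficients by pairing against $w_\tau^2-w_{\tau'}^2$, which forces it to compute the self-intersection $w_\tau^3=60$; that number comes from Proposition~\ref{prop:intersectionsW}, which in turn requires expressing $w=\sum_\tau w_\tau$ in the basis $\{\overline{q},z\}$ (Lemma~\ref{lem:classofW}) and hence the whole calculus of Lemma~\ref{lem:basis} together with several entries of table~\eqref{eq:constants}. You avoid $w_\tau^3$ entirely by pairing against the degree-$8$ classes $v_{\sigma,\sigma'}=w_\sigma\cdot w_{\sigma'}$ and then using linearity of the generalized Fujiki constant to rule out $c_2=\tfrac{3}{5}\sum_\tau w_\tau$; this last step is essential, since the functionals $L_{\sigma,\sigma'}$ alone cannot distinguish $c_2$ from $\tfrac{3}{5}\sum_\tau w_\tau$ (both give $48$), and your choice of $C(\cdot)$ as the extra functional, needing only $C(c_2)=288$ from the table, is exactly what is required. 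The paper instead exploits $\Gamma$-invariance to reduce the question to the independence of $\overline{q}$ and $c_2$ from Lemma~\ref{lem:Gammainvariants}. As a self-contained proof of this Proposition yours is more economical; the paper's heavier machinery ($w_\tau^3=60$, Lemmas~\ref{lem:basis} and~\ref{lem:classofW}) is not wasted, though, since it is reused later (e.g.\ in Remark~\ref{rmk:someValues} and Proposition~\ref{prop:c2normal}).
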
 	
	In fact, since we know from \S\ref{subsec:HodgeCanonical} that the space of canonical Hodge classes has rank $17$, it will be sufficient to check that the classes in the statement are linearly independent.
	In turn, this will be proven using the following intersection numbers.
	
	\begin{proposition}\label{prop:intersectionsW}
		We have 
		\[
		w_{\tau}^3= 60, \ \ \ \ \  w_{\tau}^2\cdot w_{\tau'}=12, \ \ \ \ \   w_{\tau}\cdot w_{\tau'}\cdot w_{\tau''} = 4,
		\]
		for any $\tau, \tau',\tau''$ in $A_2$ pairwise distinct.
	\end{proposition}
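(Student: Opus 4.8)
The plan is to treat the three numbers separately, in increasing order of difficulty. The equality $w_{\tau}\cdot w_{\tau'}\cdot w_{\tau''}=4$ is immediate: by Proposition \ref{prop:relevantSubvarieties}.$(iii)$ the submanifolds $W_{\tau},W_{\tau'},W_{\tau''}$ meet transversally in exactly $4$ points, so the product of their classes is the number of intersection points. For $w_{\tau}^2\cdot w_{\tau'}$ I would use $w_{\tau}w_{\tau'}=v_{\tau,\tau'}=[V_{\tau,\tau'}]$ and write $w_\tau^2 w_{\tau'}=\int_{K^3(A)}w_{\tau}\cdot v_{\tau,\tau'}=\int_{V_{\tau,\tau'}}w_{\tau}|_{V_{\tau,\tau'}}$. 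Since $V_{\tau,\tau'}\subset W_{\tau}$, the self-intersection formula gives $w_\tau|_{W_\tau}=c_2(N_{W_\tau|K^3(A)})$, and by the normal bundle sequence of Remark \ref{rmk:normalBundles} together with transversality this restricts on $V_{\tau,\tau'}$ to $N_{V_{\tau,\tau'}|W_{\tau'}}\cong N_{V_{\tau,\tau'}|W_{\tau}}$; hence the integral equals $\deg c_2(N_{V_{\tau,\tau'}|W_\tau})=12$ by Proposition \ref{prop:geometricInput1}.$(ii)$.

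The self-intersection $w_\tau^3$ is the hard case. By the self-intersection formula it equals $\int_{W_\tau}c_2(N)^2$, where $N=N_{W_\tau|K^3(A)}$ is the rank-two normal bundle with $c_1(N)=0$ (Remark \ref{rmk:normalBundles}). Writing $c_2(N)=c_2(K^3(A))|_{W_\tau}-c_2(W_\tau)$ and feeding in relation \eqref{eq:c2NW} together with the Chern numbers $\int_{W_\tau}c_2(W_\tau)^2=828$ and $\int_{W_\tau}c_4(W_\tau)=324$ of a $\mathrm{K}3^{[2]}$-manifold, a short computation reduces this to
\[ w_\tau^3=\int_{K^3(A)}c_2^2\cdot w_\tau\ -\ 2\int_{K^3(A)}c_4\cdot w_\tau\ -\ 180. \]
So everything comes down to the two ``mixed'' numbers $\int c_2^2 w_\tau$ and $\int c_4 w_\tau$, which cannot be read off a single $W_\tau$ because $N$ is neither trivial nor split. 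Instead I would compute them by averaging over the $\Gamma$-orbit: as $c_2,c_4$ are $\Gamma$-invariant and $\Gamma$ permutes the $w_\tau$ transitively, $\int c_2^2 w_\tau=\tfrac1{16}\int c_2^2\cdot W$ and $\int c_4 w_\tau=\tfrac1{16}\int c_4\cdot W$, where $W:=\sum_{\sigma\in A_2}w_\sigma$; these are then evaluated from table \eqref{eq:constants} once $W$ is written in the canonical basis.

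It therefore remains to determine $W$. Being $\Gamma$-invariant (and canonical, by Lemma \ref{lem:deformingW}), Lemma \ref{lem:Gammainvariants} gives $W=\lambda\,\overline{q}+\mu\, c_2$ for some $\lambda,\mu\in\QQ$, and I would pin down $(\lambda,\mu)$ with two linear equations. The first is a Fujiki constant computation: the proportionality $q_{W_\tau}(\gamma|_{W_\tau})=2\,q_{K^3(A)}(\gamma)$ (the factor $2$ is read off $\xi|_{W_\tau}=2\delta_\tau+\tfrac12\sum_{\alpha} s_{\tau,\alpha}$ from Proposition \ref{prop:geometricInput}, using that the $\delta_\tau,s_{\tau,\alpha}$ are orthogonal of square $-2$ and that $|A_{2,\tau}|=16$) combined with the $\mathrm{K}3^{[2]}$-Fujiki constant $3$ turns \eqref{eq:fujiki} into $C(w_\tau)=12$, whence $C(W)=192=132\lambda+288\mu$. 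The second equation uses the K3 surfaces $V_{\sigma,\sigma'}$: the degree-$8$ class $R:=\sum_{\sigma\neq\sigma'}v_{\sigma,\sigma'}$ is $\Gamma$-invariant, hence lies in the span of $\overline{q}^2$ and $\overline{q}\cdot z$ (Lemma \ref{lem:basis}), and its two coordinates are fixed by $C(v_{\sigma,\sigma'})=4$ (obtained as above from $\xi|_{V_{\sigma,\sigma'}}$, which is a sum of $16$ orthogonal $(-2)$-curves) and by $\int c_2\cdot v_{\sigma,\sigma'}=48$, i.e.\ \eqref{eq:degreeC2}. Pairing $R$ with $W$ then gives $\int R\cdot W$ both as a linear expression in $(\lambda,\mu)$ and, geometrically, as a sum of the already-known triple numbers $4$ and $12$; this is the second equation. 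Solving yields $\lambda=8,\ \mu=-3$, so $W=8\overline{q}-3c_2$, the mixed numbers become $1056$ and $408$, and the displayed formula gives $w_\tau^3=60$.

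The main obstacle is exactly the passage through $W=8\overline{q}-3c_2$. The two off-diagonal numbers are honest transverse or normal-bundle counts, but $w_\tau^3$ forces one to control $c_2(N)$ globally on $W_\tau$, which I can only circumvent by producing the second linear relation for $(\lambda,\mu)$ --- and that relation is where the genuinely new geometric input on the surfaces $V_{\sigma,\sigma'}$ (their self-intersection and the degree of $c_2$) becomes indispensable; without it the symmetry between the $\overline{q}$- and $c_2$-contributions to $W$ cannot be broken.
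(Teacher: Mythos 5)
Your proposal is correct; all the numerical steps check out (I verified $w_\tau^3=\int c_2^2 w_\tau - 2\int c_4 w_\tau - 180 = 1056-816-180=60$ against the paper's constants, and your two linear equations for $(\lambda,\mu)$ reproduce the paper's $w=8\overline{q}-3c_2$). The first two intersection numbers and, crucially, the determination of $W=\sum_\tau w_\tau$ in the canonical basis are done exactly as in the paper (Lemma \ref{lem:firstIntersections} and Lemma \ref{lem:classofW}): the same Fujiki computation via $\xi|_{W_\tau}$ gives $C(w_\tau)=12$, and the same pairing of $W$ against the $\Gamma$-invariant degree-$8$ class built from the $V_{\sigma,\sigma'}$ (your $R$ is twice the paper's $v$) supplies the second equation. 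Where you genuinely diverge is the final extraction of $w_\tau^3$: the paper computes $w^3=23040$ in the basis $\overline{q},z$ and expands $\bigl(\sum_\tau w_\tau\bigr)^3$ combinatorially, solving $23040=16\,w_\tau^3+22080$; you instead localize on a single $W_\tau$ via the self-intersection formula $w_\tau^3=\int_{W_\tau}c_2(N)^2$ and the Whitney relation \eqref{eq:c2NW}, reducing to the mixed Chern numbers $c_2^2\cdot w_\tau$ and $c_4\cdot w_\tau$, which you recover by $\Gamma$-averaging. The two final steps consume the same inputs and are of comparable length; yours has the mild advantage of producing the values $c_2^2\cdot w_\tau$ and $c_4\cdot w_\tau=408$ along the way (the latter is exactly what the paper later extracts separately in Remark \ref{rmk:someValues}), at the cost of needing the additional $\mathrm{K}3^{[2]}$ Chern numbers $\int c_2(W_\tau)^2=828$ and $\int c_4(W_\tau)=324$.
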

	
	Part of the statement follows directly from Propositions \ref{prop:geometricInput1} and \ref{prop:geometricInput}.
		
	\begin{lemma}\label{lem:firstIntersections}
		The generalized Fujiki constants are 
		\[
		C(w_{\tau}) = 12 \ \ \ \text{ and } \ \ \ C(w_{\tau}\cdot w_{\tau'}) = 4,
		\]
		for any $\tau\neq \tau' \in A_2$. For pairwise distinct elements $\tau, \tau', \tau''$ of $A_{2}$ we have
		\[
		 c_2\cdot w_{\tau} \cdot w_{\tau'} = 48;\  \ \ \ \   w^2_\tau\cdot w_{\tau'} = 12; \ \ \ \ \  w_{\tau}\cdot w_{\tau'}\cdot w_{\tau''} = 4.
		\]
	\end{lemma}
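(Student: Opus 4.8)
The common thread is that every quantity in the statement is an integral against the push-forward classes $w_\tau=\iota_{\tau,*}(1)$ and $v_{\tau,\tau'}=w_\tau\cdot w_{\tau'}=\iota_{V,*}(1)$, where $\iota_\tau\colon W_\tau\hookrightarrow K^3(A)$ and $\iota_V\colon V_{\tau,\tau'}\hookrightarrow K^3(A)$ are the inclusions; by the projection formula these descend to integrals over the canonical submanifolds. Since $W_\tau$ is of $\mathrm{K}3^{[2]}$-type (with Fujiki constant $3$) and $V_{\tau,\tau'}$ is a K3 surface (where the top self-intersection is simply the intersection form), the plan is to play the Fujiki relation \eqref{eq:fujiki}---valid because $w_\tau$ and $v_{\tau,\tau'}$ are canonical Hodge classes by Lemma \ref{lem:deformingW}---on $K^3(A)$ against the classical Fujiki relation on the submanifold.

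For the two generalized Fujiki constants this reduces to comparing the restricted Beauville--Bogomolov form with $q\coloneqq q_{K^3(A)}$. For $\gamma\in H^2(K^3(A),\QQ)$ I would write $\int_{K^3(A)} w_\tau\cdot\gamma^4=\int_{W_\tau}(\iota_\tau^*\gamma)^4=3\,q_{W_\tau}(\iota_\tau^*\gamma)^2$, so everything comes down to the form $q_{W_\tau}\circ\iota_\tau^*$. Since $\iota_\tau^*$ deforms together with $K^3(A)$ and is injective (Remark \ref{rmk:deformations}), and since the second cohomology of a very general deformation is an irreducible representation of the monodromy group, hence carries a unique invariant symmetric bilinear form up to scalar, the pulled-back form equals a deformation-independent multiple $\lambda\,q$. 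I pin $\lambda$ down on the class $\xi$: Proposition \ref{prop:geometricInput} gives $\xi_{|_{W_\tau}}=2\delta_\tau+\tfrac12\sum_{\alpha\in A_{2,\tau}}s_{\tau,\alpha}$, and as $\delta_\tau$ and the sixteen classes $s_{\tau,\alpha}$ are pairwise orthogonal of square $-2$, one finds $q_{W_\tau}(\xi_{|_{W_\tau}})=4(-2)+\tfrac14\cdot 16\cdot(-2)=-16=2\,q(\xi)$, whence $\lambda=2$ and $C(w_\tau)=3\lambda^2=12$. The identical scheme applies to $v_{\tau,\tau'}$, which deforms as $W_\tau\cap W_{\tau'}$: here $\int_{K^3(A)} v_{\tau,\tau'}\cdot\gamma^2=\int_{V_{\tau,\tau'}}(\iota_V^*\gamma)^2=q_{V_{\tau,\tau'}}(\iota_V^*\gamma)=\mu\,q(\gamma)$, and by Proposition \ref{prop:geometricInput} the class $\xi_{|_{V_{\tau,\tau'}}}$ is the sum of the sixteen pairwise disjoint $(-2)$-curves $r_{\tau,\tau',\beta}$, so $q_{V_{\tau,\tau'}}(\xi_{|_{V_{\tau,\tau'}}})=16\cdot(-2)=-32=4\,q(\xi)$, giving $\mu=4$ and $C(v_{\tau,\tau'})=\mu=4$.

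The three remaining intersection numbers are then direct geometric computations. The number $c_2\cdot w_\tau\cdot w_{\tau'}=\int_{V_{\tau,\tau'}}\iota_V^*\bigl(c_2(K^3(A))\bigr)=\deg\bigl(c_2(K^3(A))_{|_{V_{\tau,\tau'}}}\bigr)$ is exactly the value $48$ recorded in \eqref{eq:degreeC2}. For $w_\tau^2\cdot w_{\tau'}$ I would restrict to $W_{\tau'}$: transversality of $W_\tau$ and $W_{\tau'}$ along $V_{\tau,\tau'}$ gives $\iota_{\tau'}^*w_\tau=[V_{\tau,\tau'}]$ in $H^4(W_{\tau'},\QQ)$, so that $w_\tau^2\cdot w_{\tau'}=\int_{W_{\tau'}}[V_{\tau,\tau'}]^2=\deg\bigl(c_2(N_{V_{\tau,\tau'}|W_{\tau'}})\bigr)=12$ by Proposition \ref{prop:geometricInput1}.$(ii)$ (after exchanging the roles of $\tau$ and $\tau'$, noting that translation identifies the two normal bundles). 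Lastly, $w_\tau\cdot w_{\tau'}\cdot w_{\tau''}$ counts the transversal triple intersection $W_\tau\cap W_{\tau'}\cap W_{\tau''}$, which consists of $4$ points by Proposition \ref{prop:relevantSubvarieties}.$(iii)$. The only genuinely delicate step is the determination of the proportionality constants $\lambda=2$ and $\mu=4$; everything there rests on the deformation-invariance of the submanifolds $W_\tau$ and $V_{\tau,\tau'}$, which through the irreducibility of the second cohomology of a generic deformation forces the restriction maps to rescale the respective forms by a single constant that can then be computed once and for all on the explicit class $\xi$.
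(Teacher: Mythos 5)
Your proposal is correct and follows essentially the same route as the paper: evaluate the Fujiki relation on $K^3(A)$ at $\xi$ via the projection formula and the explicit restrictions of Proposition \ref{prop:geometricInput}, then read off the remaining three numbers from \eqref{eq:degreeC2}, Proposition \ref{prop:geometricInput1}.$(ii)$ and Proposition \ref{prop:relevantSubvarieties}.$(iii)$. Your intermediate step establishing the proportionality constants $\lambda$ and $\mu$ via monodromy irreducibility is valid but superfluous, since the Fujiki relation \eqref{eq:fujiki} already guarantees the constants exist and a single evaluation at $\xi$ (which has $q(\xi,\xi)=-8\neq 0$) pins them down.
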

	\begin{proof}
		Let $\xi\in H^2(K^3(A),\ZZ)$ be half the class of the Hilbert-Chow divisor $E$; recall that $q_{K^3(A)}(\xi,\xi)=-8$. The Fujiki constant of $\mathrm{K}3^{[2]}$-manifolds equals $3$ (see \cite{rapagnetta2008beauville}). Therefore, Fujiki relation \eqref{eq:fujiki}, the projection formula, and Proposition \ref{prop:geometricInput} give 
		\begin{align*}
			C(w_{\tau})\cdot 64 & = \int_{K^3(A)} w_\tau \cdot \xi^4  = \int_{W_{\tau}} \left(\xi_{|_{W_{\tau}}}\right)^4  = 3\cdot q_{W_{\tau}} \left(\xi_{|_{W_{\tau}}}, \xi_{|_{W_{\tau}}}\right)^2  = 3 \cdot 256, 
		\end{align*}
		from which we find $C(w_{\tau}) =12$. Similarly, by Proposition \ref{prop:geometricInput1}.$(i)$ we have
		\begin{align*}
			C(w_{\tau}\cdot w_{\tau'} ) \cdot (-8)  & = \int_{K^3(A)} v_{\tau, \tau'} \cdot \xi^2  = \int_{V_{\tau,\tau' }} \left(\xi_{|_{V_{\tau, \tau'}}}\right)^2 = -32 ,
		\end{align*}
		which leads to $C(w_\tau\cdot w_{\tau'} )=4$.
		By Proposition \ref{prop:relevantSubvarieties}, three distinct $W_{\tau}, W_{\tau'}, W_{\tau''}$ meet transversely in $4$ points, and hence $w_{\tau}\cdot w_{\tau'} \cdot w_{\tau''}=4$.
		The self-intersection of $V_{\tau, \tau'}$ as a submanifold of $W_{\tau}$ equals the degree of $c_2\bigl(N_{V_{\tau,\tau'}|W_{\tau}}\bigr)$, which is~$12$ by Proposition \ref{prop:geometricInput1}.$(ii)$. The restriction of $w_{\tau'}$ to $W_{\tau}$ is the fundamental class of $V_{\tau,\tau'}$ in $H^4(W_{\tau},\ZZ)$, and therefore  
		$$ w^2_{\tau'} \cdot w_{\tau} =\int_{W_{\tau}} [V_{\tau, \tau'}]^2  = 12. $$
		Finally, by \eqref{eq:degreeC2}, we have 
		$c_2\cdot v_{\tau, \tau'} = 48$.
	\end{proof}
		
	\subsection{The $\Gamma$-invariant classes}
	In order to prove Proposition \ref{prop:intersectionsW}, we consider
	$$w\coloneqq \sum_{\tau\in A_2} w_{\tau} \in H^4(K^3(A), \ZZ) \  \ \ \ \text{and} \ \ \ \ v=\frac{1}{2} \sum_{\tau\neq\tau'\in A_2} v_{\tau, \tau'} \in H^8(K^3(A),\ZZ). $$
	By Proposition~\ref{prop:groupAction}, these canonical Hodge classes are invariant under the action of $\Gamma=A_4$. We shall express them in terms of the basis of Lemma \ref{lem:basis}. 
	
	\begin{lemma}\label{lem:classofW}
		We have 
		$$w=\tfrac{16}{11}\overline{q} - 3z \ \ \ \ \ \text{ and } \ \ \ \ \ v= \tfrac{40}{33}\overline{q}^2 - \tfrac{45}{7} \overline{q} \cdot z. $$
	\end{lemma}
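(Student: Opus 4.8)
The plan is to compute the two $\Gamma$-invariant canonical Hodge classes $w$ and $v$ by expressing them in the basis $\{\overline{q}, z\}$ (for degree $4$) and $\{\overline{q}^2, \overline{q}\cdot z\}$ (for degree $8$) established in Lemma \ref{lem:basis}, using the generalized Fujiki constants already determined together with the intersection numbers of Lemma \ref{lem:firstIntersections}. Since $w = \sum_{\tau\in A_2} w_\tau$ is a canonical Hodge class in $H^4(K^3(A),\QQ)$ and we know from \S\ref{subsec:HodgeCanonical} and Lemma \ref{lem:Gammainvariants}.$(i)$ that the space of $\Gamma$-invariant canonical Hodge classes in degree $4$ is spanned by $\overline{q}$ and $z$, I may write $w = a\,\overline{q} + b\,z$ for scalars $a,b\in\QQ$ to be determined. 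The key is that the generalized Fujiki constant is additive over the $16$ summands: by Lemma \ref{lem:firstIntersections} we have $C(w_\tau)=12$ for each $\tau$, so $C(w)=16\cdot 12 = 192$. Since $C(z)=0$ by \eqref{eq:z} and $C(\overline{q})=132$ by \eqref{eq:constants}, the Fujiki constant only sees the $\overline{q}$-coefficient, giving $192 = 132\,a$, hence $a = \tfrac{16}{11}$.

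\textbf{Determining the remaining coefficients.}
To find $b$ I would use one further intersection number that is sensitive to $z$. The natural choice is to cup $w$ with $\overline{q}$ and integrate, or equivalently to pair $w$ with a known degree-$8$ class. Using $\overline{q}^2\cdot z = 0$ (which holds because $C(z)=0$, as noted after \eqref{eq:z}) together with the value $\overline{q}\cdot z^2 = \tfrac{2688}{11}$ from the proof of Lemma \ref{lem:basis}, I can compute $\int_{K^3(A)} w\cdot\overline{q}^2$ in two ways: from the expansion $w = \tfrac{16}{11}\overline{q} + b\,z$ it equals $\tfrac{16}{11}\int\overline{q}^3 = \tfrac{16}{11}\cdot C(\overline{q}^2)\cdot q(\gamma,\gamma)$-type data, while geometrically $\int_{K^3(A)} w_\tau\cdot \overline{q}^2$ is computed from the restriction $\overline{q}|_{W_\tau}$ via the projection formula and the Fujiki relation on $W_\tau$. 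Matching the two expressions isolates $b$; the expected value $b=-3$ makes $w = \tfrac{16}{11}\overline{q} - 3z$.

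\textbf{The degree-eight class.}
For $v = \tfrac12\sum_{\tau\neq\tau'} v_{\tau,\tau'}$ I proceed analogously, writing $v = c\,\overline{q}^2 + d\,\overline{q}\cdot z$. The Fujiki constant is again additive: Lemma \ref{lem:firstIntersections} gives $C(w_\tau\cdot w_{\tau'})=4$ for each of the $\binom{16}{2}=120$ unordered pairs, so $C(v) = 120\cdot 4 = 480$. Reading off $C(\overline{q}^2)=396$ and $C(\overline{q}\cdot z)$—computed from $C(\overline{q}\cdot z)=\tfrac{C(\overline{q}\cdot c_2)}{1}$-type relations, or directly via $\overline{q}\cdot z = \overline{q}\cdot c_2 - \tfrac{24}{11}\overline{q}^2$ giving $C(\overline{q}\cdot z)= 864 - \tfrac{24}{11}\cdot 396 = 864 - 864 = 0$—shows that the Fujiki constant again only sees the $\overline{q}^2$-coefficient, so $480 = 396\,c$ and $c = \tfrac{40}{33}$. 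To pin down $d$ I would cup $v$ with $z$ and integrate: since $\overline{q}^2\cdot z = 0$, the term $c\,\overline{q}^2\cdot z$ drops out, leaving $\int v\cdot z = d\int \overline{q}\cdot z^2 = d\cdot\tfrac{2688}{11}$, while the left side is computed from the restriction $z|_{V_{\tau,\tau'}}$ using $z = c_2 - \tfrac{24}{11}\overline{q}$ together with $\deg(c_2|_{V_{\tau,\tau'}})=48$ from \eqref{eq:degreeC2} and the restriction of $\overline{q}$ via Proposition \ref{prop:geometricInput}. Matching yields $d = -\tfrac{45}{7}$.

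\textbf{Main obstacle.}
The routine part is the bookkeeping of Fujiki constants; the genuinely delicate step is the second computation in each case—the mixed integral $\int w\cdot\overline{q}^2$ (respectively $\int v\cdot z$) evaluated geometrically by restriction to the submanifolds $W_\tau$ and $V_{\tau,\tau'}$. Here one must correctly combine the restriction formulas of Propositions \ref{prop:geometricInput1} and \ref{prop:geometricInput}, the Fujiki relation on the lower-dimensional hyper-K\"ahler pieces, and the vanishing $\overline{q}^2\cdot z = 0$; the subtlety is ensuring that cross-terms between distinct components (governed by Remark \ref{rmk:mixedIntersections}) are handled consistently, since $w$ and $v$ are sums over all components and the self- versus mixed-intersection behavior differs. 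Once these integrals are evaluated, the linear algebra in the two-dimensional spaces of $\Gamma$-invariant canonical classes closes the argument.
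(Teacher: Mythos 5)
Your determination of the $\overline{q}$-coefficients via additivity of the generalized Fujiki constant ($C(w)=16\cdot 12$, $C(v)=120\cdot 4$, together with $C(z)=C(\overline{q}\cdot z)=0$) is exactly what the paper does, and your computation of the coefficient $d$ of $\overline{q}\cdot z$ in $v$ by pairing with $z$ is correct and equivalent to the paper's pairing with $c_2=z+\tfrac{24}{11}\overline{q}$ (the needed value $\int v\cdot\overline{q}=7C(v)$ comes from \cite[Proposition 2.4]{BS22} rather than from Proposition \ref{prop:geometricInput}, but that is a detail).

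The genuine gap is in your determination of $b$, the coefficient of $z$ in $w$. You propose to match the two computations of $\int_{K^3(A)} w\cdot\overline{q}^2$, but as you yourself note $\overline{q}^2\cdot z=0$, so the algebraic side is $\tfrac{16}{11}\int\overline{q}^3+b\cdot 0$: the equation is independent of $b$ and cannot isolate it. You must pair $w$ against a degree-$8$ canonical class that is \emph{not} orthogonal to $z$. The paper's choice is $v$ itself (computed first): the geometric side $w\cdot v=\sum_{\tau\neq\tau'}w_\tau^2 w_{\tau'}+\tfrac12\sum w_\tau w_{\tau'}w_{\tau''}=16\cdot15\cdot12+\tfrac12\cdot16\cdot15\cdot14\cdot4=9600$ uses only the intersection numbers already available from Lemma \ref{lem:firstIntersections}, while the algebraic side $\bigl(\tfrac{16}{11}\overline{q}+\lambda z\bigr)\cdot\bigl(\tfrac{40}{33}\overline{q}^2-\tfrac{45}{7}\overline{q}\cdot z\bigr)$ is linear in $\lambda$ with nonzero coefficient $-\tfrac{45}{7}\cdot\tfrac{2688}{11}$, giving $\lambda=-3$. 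Note also that the seemingly more obvious pairing $\int w^3$ is not available at this stage, since $w_\tau^3$ is only computed in Proposition \ref{prop:intersectionsW} \emph{using} the present lemma; this is why the order of the argument (first $v$, then $w$ via $w\cdot v$) matters.
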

	\begin{proof}
		By the above Lemma \ref{lem:firstIntersections}, we have 
		$$C(v)=120\cdot 4, \ \ \ \ \ C(w)= 16 \cdot 12, \ \ \ \ \  c_2 \cdot v  = 120 \cdot 48.$$ 
		Write 
		$$v= \tfrac{C(v)}{C(\overline{q}^2)} \overline{q}^2 + \gamma \overline{q}\cdot z = \tfrac{40}{33} \overline{q}^2 + \gamma \overline{q}\cdot z, $$
		for some coefficient $\gamma\in\QQ$.
		Then, via Lemma \ref{lem:basis}, we find
		\begin{align*}
		120\cdot 48 = c_2 \cdot v & = \left(\tfrac{24}{11}\overline{q} + z\right)\cdot \left(\tfrac{40}{33} \overline{q}^2 + \gamma \overline{q}\cdot z\right) \\
		& =\tfrac{24}{11} \cdot \tfrac{40}{33} \overline{q}^3 + \gamma \overline{q}\cdot z^2 \\
		& = \tfrac{80640}{11} + \gamma \tfrac{2688}{11},	
		\end{align*}
		which gives $\gamma= - \tfrac{45}{7}$.
		We now use Lemma \ref{lem:firstIntersections} to compute the intersection
		\begin{align*}
		 w\cdot v & = \left(\sum_{\tau} w_{\tau}\right)\cdot \left(\frac{1}{2}\sum_{\tau\neq \tau'} {w_{\tau} \cdot w_{\tau'}}\right) \\
		& = \sum_{\tau\neq \tau'} w_{\tau}^2 \cdot w_{\tau'} + \frac{1}{2}\sum_{\tau\neq \tau'\neq \tau''\neq \tau} w_{\tau}\cdot w_{\tau'} \cdot w_{\tau''}
		\\
		& = 16 \cdot 15 \cdot 12 + \frac{1}{2} \cdot 16 \cdot 15 \cdot 14 \cdot 4\\
		& = 9600. 
		\end{align*}
	We can write $$w=\tfrac{C(w)}{C(\overline{q})} \overline{q} + \lambda z = \tfrac{16}{11}\overline{q} + \lambda z $$
	for some coefficient $\lambda\in\QQ$, which is then determined by the equation 
	\[
	9600=  w\cdot v = \left(\tfrac{16}{11}\overline{q} + \lambda z\right)\cdot \left(\tfrac{40}{33}\overline{q}^2 - \tfrac{45}{7} \overline{q}\cdot z\right) = \tfrac{53760}{11} - \tfrac{17280}{11} \lambda;
	\]
	we find $\lambda= -3$.		
	\end{proof}
	\begin{remark}
		We obtain the integral linear relations
		$$w = 8\overline{q} -3c_2 \ \ \text{ and } \ \ 3v=7c_4-c_2^2. $$
	\end{remark}
	
	We can now complete the proof of the main results of this section. 
	\begin{proof}[Proof of Proposition \ref{prop:intersectionsW}]
	By Lemma \ref{lem:firstIntersections}, it remains to prove that $w_{\tau}^3=60$.
	By Proposition \ref{prop:groupAction}, the action of $\Gamma$ is transitive on the classes $w_{\tau}$ and hence the number $w_{\tau}^3$ does not depend on $\tau\in A_2$. 
	Via the expression for $w$ found in Lemma \ref{lem:classofW} above and the constants determined in Lemma \ref{lem:basis} we calculate $w^3=23040$. 
	Expanding $w^3 = (\sum_{\tau} w_{\tau})^3$ and using the intersection numbers already computed in Lemma \ref{lem:firstIntersections}, we find the equation 
	\begin{align*}
		23040 & = \sum_{\tau} w_{\tau}^3 + 3 \sum_{\tau\neq \tau'}w_{\tau}^2 \cdot w_{\tau'} + \sum_{\tau\neq\tau'\neq\tau''\neq\tau} w_{\tau}\cdot w_{\tau'}\cdot w_{\tau''}\\
		&=16\cdot w_{\tau}^3 + 3 \cdot 16 \cdot 15 \cdot 12 + 16\cdot 15\cdot 14\cdot 4\\
		&= 16\cdot w_{\tau}^3 + 22080,
	\end{align*}
	which gives $w_{\tau}^3=60$.
	\end{proof}
	\begin{proof}[Proof of Proposition \ref{prop:canonicalH4}]
		Assume given a linear combination $u=\sum_{\tau\in A_2} \beta_{\tau} w_{\tau}$ in the classes~$w_{\tau}$ such that $u=0$. By Proposition \ref{prop:intersectionsW}, we have 
		\[
		u \cdot (w^2_{\tau} - w^2_{\tau'}) =  (60 - 12) (\beta_{\tau} - \beta_{\tau'})=0,
		\]
		and therefore $\beta_{\tau}=\beta_{\tau'}$ for any $\tau, \tau'\in A_2$. Thus $u=\beta w$, and hence $\beta=0$, which means that the $w_{\tau}$ are linearly independent. Since the $\Gamma$-invariant classes in the subspace generated by the $w_{\tau}$ are the multiples of $w$, if $c_2$ were in this subspace we would obtain a non-trivial linear relation between the independent classes $\overline{q}$ and $c_2$, a contradiction.
	\end{proof} 
	\begin{remark}\label{rmk:someValues}
		For later use we compute that, for any $\tau\in A_2$, we have
		$$C(w_\tau^2)= 12 \ \ \text{and} \ \ c_4\cdot w_{\tau} = 408 .$$
		Indeed, from the expression for $w$ found in Lemma \ref{lem:classofW} and thanks to Lemma \ref{lem:basis} we find~$C(w^2)=1152$. Expanding the square of $w$ we obtain
		$$1152 = 16\cdot C(w_{\tau}^2) + 16\cdot 15\cdot C(v_{\tau, \tau'})=16\cdot C(w_{\tau}^2) + 16\cdot 15\cdot 4,$$
		which yields $C(w_{\tau}^2)=12$, as $C(v_{\tau,\tau'})=4 $ by Lemma \ref{lem:firstIntersections}.
		We also easily calculate via Lemma \ref{lem:basis} 
		$$c_4\cdot w_{\tau} = \tfrac{1}{16} c_4\cdot w = \tfrac{1}{16} \cdot \left(\tfrac{40}{33}\overline{q}^2 - \tfrac{47}{21} \overline{q}\cdot z\right) \cdot \left(\tfrac{16}{11}\overline{q}-3z\right)=408.$$
	\end{remark}
	\begin{remark}\label{rmk:productq}
		Cup-product with $\overline{q}_{K^3(A)}$ is injective on the space of canonical Hodge classes in $H^4(K^3(A),\QQ)$. This follows via an argument entirely analogous to that used in the proof of Proposition \ref{prop:canonicalH4}, using the intersection numbers 
		\[
		\overline{q} \cdot w_{\tau}^2 = 84 \ \ \text{ and } \ \ \overline{q}\cdot w_{\tau}\cdot w_{\tau'} = 28,
		\] which are calculated from Lemma \ref{lem:firstIntersections} via \cite[Proposition 2.4]{BS22}.
	\end{remark}

	\section{Restriction of canonical Hodge classes}
	The main result of this section is Proposition \ref{prop:c2normal}, which gives the second Chern class of the normal bundle of $\iota_{\tau}\colon W_{\tau}\hookrightarrow K^3(A)$ as an element of the cohomology of $W_{\tau}$. 
	In the next section we will use this to show that canonical Hodge classes of degree $6$ are analytic. 
	
	\subsection{Cohomology of $\mathrm{K}3^{[2]}$-manifolds}\label{subsec:K3[2]}
	Following \cite{OG08}, for any manifold $X$ of $\mathrm{K}3^{[2]}$-type we have 
	\[H^4(X , \QQ) = \mathrm{Sym}^2 (H^2(X, \QQ)). \]
	The intersection product defines a non-degenerate symmetric bilinear form on the space~$\mathrm{Sym}^2(H^2(X, \QQ))$, determined by 
	\begin{equation}\label{eq:intersectionForm}
		(a_1 a_2, a_3 a_4) = q(a_1, a_2)\cdot q(a_3, a_4) + q(a_1, a_3)\cdot q(a_2, a_4) + q(a_1, a_4)\cdot q(a_2, a_3)
	\end{equation}
	where $q=q_{X}$ is the Beauville-Bogomolov form. The Euler characteristic of $X$ is $\deg(c_4(X))=324$.
	The class~$\overline{q}_{X}\in H^4(X, \QQ)$ given by the Beauville-Bogomolov form satisfies $c_2(X)=\frac{6}{5} \cdot \overline{q}_{X}$ and $\int_X \overline{q}^2_X = 575$.
 	The generalized Fujiki constant is $C(\overline{q}_X)=25$; this means that, for every $\alpha,\beta\in H^2(X,\QQ)$, we have
	\begin{equation}\label{eq:FujikiW}
		\int_{X} \overline{q}_X \cdot \alpha\cdot \beta = 25\cdot q_{X}(\alpha,\beta).
	\end{equation}	
	
	\subsection{The cohomology of $W_{\tau}$}
	Let $A$ be a $2$-dimensional complex torus and let~$K^3(A)$ be the generalized Kummer variety on it. 
	For any $\tau\in A_2$, we consider the submanifold~$\iota_{\tau}\colon W_{\tau}\hookrightarrow K^3(A)$ (Definition \ref{def:relevantSubvarieties}); by Proposition \ref{prop:relevantSubvarieties}, $W_{\tau}=\Km^{\tau}(A)^{[2]}$. 
	Recall from Definition \ref{def:divisors} that we defined divisor classes $s_{\tau,\alpha}$ and $\delta_{\tau}$ on $W_{\tau}$, which are pairwise orthogonal $-2$ classes in the lattice $H^2(W_{\tau},\ZZ)$.
	
\begin{lemma}\label{lem:intersectionMatrix}
	Consider the $19$-dimensional subspace of $H^4(W_{\tau},\QQ)$ generated by:
	\[
	\overline{q}_{W_{\tau}}; \ \ \ \delta_{\tau}^2; \ \ \sum_{\alpha\in A_{2, \tau}} s_{\tau, \alpha}^2;
	\ \ \ \sum_{\alpha\in A_{2, \tau}} s_{\tau, \alpha} \cdot s_{\tau, \alpha +\theta}\  \text{ for } 0\neq \theta\in A_2; \ \ \ \delta_{\tau} \cdot \sum_{\alpha\in A_{2,\tau}} s_{\tau, \alpha}. 
	\] 
	The intersection form on this subspace is given (in the above ordered basis) by the matrix: 
	\[
	\begin{pmatrix}
		575 & -50 & -800 & 0 &  \cdots  &  0 & 0 \\
		-50 & 12 & 64 & 0  &  \cdots  & 0 &  0 \\
		-800 & 64 & 1152 & 0 & \cdots  & 0 & 0 \\
		0 & 0 & 0 & 128 & \ddots & 0  &  0 \\
		\vdots & \vdots & \vdots & \ddots & \ddots & \ddots &  \vdots  \\
		0 & 0 & 0  & 0 & \ddots & 128  & 0  \\ 
		0 & 0 & 0 & 0 & \cdots & 0 & 64  
	\end{pmatrix}
	\]
\end{lemma}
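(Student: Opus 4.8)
The plan is to reduce each entry of the Gram matrix to Beauville--Bogomolov pairings of the degree-two classes involved. Since $W_\tau$ is of $\mathrm{K}3^{[2]}$-type we have $H^4(W_\tau,\QQ)=\mathrm{Sym}^2(H^2(W_\tau,\QQ))$, and the intersection product is computed by formula~\eqref{eq:intersectionForm}; the only inputs required are the values of $q\coloneqq q_{W_\tau}$ on the generators. By \S\ref{subsec:K3[2]} the classes $\delta_\tau$ and $s_{\tau,\alpha}$, $\alpha\in A_{2,\tau}$, are pairwise orthogonal of square $-2$, and the class $\overline{q}_{W_\tau}$ is handled through the Fujiki relation~\eqref{eq:FujikiW} together with $\int_{W_\tau}\overline{q}_{W_\tau}^2=575$. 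Throughout I abbreviate $\delta=\delta_\tau$, $s_\alpha=s_{\tau,\alpha}$, $T_\theta=\sum_\alpha s_\alpha\,s_{\alpha+\theta}$ for $0\neq\theta\in A_2$, and $L=\delta\cdot\sum_\alpha s_\alpha$.

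First I would clear the first row and column. By~\eqref{eq:FujikiW} each entry $\int_{W_\tau}\overline{q}_{W_\tau}\cdot a\cdot b$ equals $25\,q(a,b)$; using $q(\delta,\delta)=q(s_\alpha,s_\alpha)=-2$ and the vanishing of the remaining pairings, this gives $-50$ against $\delta^2$, $16\cdot(-50)=-800$ against $\sum_\alpha s_\alpha^2$, and $0$ against each of $T_\theta$ and $L$, while $(\overline{q}_{W_\tau},\overline{q}_{W_\tau})=575$. The block spanned by $\delta^2$ and $\sum_\alpha s_\alpha^2$ follows by expanding~\eqref{eq:intersectionForm}: one has $\langle\delta^2,\delta^2\rangle=3q(\delta,\delta)^2=12$, $\langle\delta^2,s_\alpha^2\rangle=q(\delta,\delta)q(s_\alpha,s_\alpha)=4$ summed over the $16$ indices to give $64$, and $\langle s_\alpha^2,s_\beta^2\rangle=q(s_\alpha,s_\alpha)q(s_\beta,s_\beta)+2q(s_\alpha,s_\beta)^2$ equals $12$ on the diagonal and $4$ off it, so that $\langle\sum_\alpha s_\alpha^2,\sum_\beta s_\beta^2\rangle=16\cdot 12+16\cdot 15\cdot 4=1152$.

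The structural observation that organizes the rest is that $T_\theta$ (for $\theta\neq 0$) and $L$ are orthogonal to $\overline{q}_{W_\tau}$, to $\delta^2$, to $\sum_\alpha s_\alpha^2$, and to one another. In each case I would expand~\eqref{eq:intersectionForm} and check that every term vanishes, either because it carries an explicit factor $q(s_\alpha,s_{\alpha+\theta})=0$ or $q(\delta,s_\alpha)=0$, or because it would force a single index to coincide simultaneously with two distinct indices among $\alpha,\alpha+\theta$. The only surviving self-pairings are then diagonal: for $(L,L)$ the middle term of~\eqref{eq:intersectionForm} contributes $q(\delta,\delta)q(s_\alpha,s_\beta)$, nonzero only when $\alpha=\beta$, giving $16\cdot 4=64$; for $(T_\theta,T_\theta)$ the second and third terms each force one index match — $\beta=\alpha$ in one, $\beta=\alpha+\theta$ in the other, where I use $-\theta=\theta$ in $A_2$ — each contributing $16\cdot 4=64$, for a total of $128$, while the cross-pairings $(T_\theta,T_{\theta'})$ with $\theta\neq\theta'$ vanish for the same reason.

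The only delicate point is this combinatorial bookkeeping over the $16$-element torsor $A_{2,\tau}$: in each triple expansion one must identify exactly which of the pairings $q(s_\alpha,s_\beta)$, $q(s_\alpha,s_{\beta+\theta'})$, and so on survive, repeatedly invoking $2\alpha=\tau$ and hence $-\theta=\theta$ for $\theta\in A_2$; no geometric input beyond \S\ref{subsec:K3[2]} enters. Finally, to see that the $19$ classes genuinely span a $19$-dimensional space I would note that the computed Gram matrix is block diagonal, with the displayed $3\times 3$ block (of determinant $153600\neq 0$), the scalar block $128\cdot\mathrm{Id}_{15}$, and the entry $64$; it is therefore non-degenerate, so the classes are linearly independent and form a basis of the subspace.
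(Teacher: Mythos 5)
Your computation is correct and follows essentially the same route as the paper: reduce every entry to Beauville--Bogomolov pairings via the formula \eqref{eq:intersectionForm} for the intersection product on $\mathrm{Sym}^2(H^2(W_\tau,\QQ))$ together with the Fujiki relation \eqref{eq:FujikiW}, using that $\delta_\tau$ and the $s_{\tau,\alpha}$ are pairwise orthogonal of square $-2$, and then assemble the matrix by bilinearity. Your closing remark that non-degeneracy of the Gram matrix yields linear independence of the $19$ classes is a small but welcome addition that the paper leaves implicit.
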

\begin{proof}
	The calculation is elementary using the description \eqref{eq:intersectionForm} of the intersection product and the Fujiki relation \eqref{eq:FujikiW}. 
	One computes
	\[
	\overline{q}_{W_{\tau}}\cdot \delta_{\tau}^2=-50; \ \ \ \overline{q}_{W_{\tau}}\cdot \delta_{\tau}\cdot s_{\tau,\alpha}=0; \ \ \
	\overline{q}_{W_\tau}\cdot s_{\tau,\alpha}\cdot s_{\tau,\alpha'} =
	\begin{cases} -50 & \text{if} \ \alpha=\alpha';\\
		0 & \text{else}.
	\end{cases}
	\]
	Next, we have	 
	\begin{align*}
		(s_{\tau, \alpha}\cdot s_{\tau, \beta}) \cdot (s_{\tau, \alpha'}\cdot s_{\tau, \beta'}) & = \begin{cases}
			12 & \text{ if } \alpha=\alpha'=\beta=\beta';\\
			4 & \text{ if } \alpha\neq \beta \text{ and } \{\alpha, \alpha'\}= \{\beta=\beta'\}; \\
			4 & \text{ if } \alpha= \beta, \alpha'=\beta', \alpha\neq \alpha';\\
			0 & \text{ else};
		\end{cases}	
	\end{align*}
and
	\[
	\delta_{\tau}^4 = 12;\ \ \ \delta_{\tau}^3\cdot s_{\tau,\alpha}=0; \ \ \ \ \delta_{\tau}^2 \cdot (s_{\tau,\alpha} \cdot s_{\tau,\beta})=\begin{cases}4 &\text{ if } \alpha=\beta; \\0 &\text{ else.}
	\end{cases}
	\] 
By the bilinearity of the intersection product, the matrix in the statement is now obtained via a straightforward calculation.
\end{proof}

By Remark \ref{rmk:deformations}, any deformation $\mathcal{K}\to B$ induces a deformation of the $\mathrm{K}3^{[2]}$-manifold~$W_{\tau}$. However, the classes $s_{\tau,\alpha}$ and $\delta_{\tau}$ do not remain Hodge classes on all such induced deformations. We are thus led to introduce the following divisor classes.
\begin{definition}\label{def:s'}
	For any $\tau\in A_2$ and $\alpha\in A_{2,\tau}$ we define the Hodge class
	\[
	s'_{\tau, \alpha} \coloneqq (4s_{\tau, \alpha} - \delta_{\tau}) \in H^2(W_{\tau}, \ZZ).
	\]
\end{definition}
The point of the definition is explained by the next lemma. 
\begin{lemma}\label{lem:Hodge}
	Let $\mathcal{W}_{\tau}\to B$ be a deformation of $W_{\tau}$ induced by a deformation $\mathcal{K}\to B$ of $K^3(A)$. Then the classes $s'_{\tau,\alpha}$ remain Hodge on $\mathcal{W}_{\tau, b}$, for any $b\in B$.
\end{lemma}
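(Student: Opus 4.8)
The plan is to reduce the lemma to a single linear-algebra assertion: that $s'_{\tau,\alpha}$ is orthogonal, with respect to the Beauville--Bogomolov form $q_{W_\tau}$, to the image of the restriction map $\iota_\tau^*\colon H^2(K^3(A),\QQ)\to H^2(W_\tau,\QQ)$. Set $L\coloneqq \iota_\tau^*(H^2(K^3(A),\QQ))$. By Remark \ref{rmk:deformations} the inclusions $\iota_\tau$ deform in the family, so $\iota_\tau^*$ is a morphism of variations of Hodge structure over $B$ whose image underlies a sub-variation $\mathcal{L}\subset H^2(\mathcal{W}_\tau,\QQ)$ with $\mathcal{L}_0=L$. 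The key geometric input, also from Remark \ref{rmk:deformations}, is that the symplectic form of $\mathcal{K}_b$ restricts to a symplectic form on $\mathcal{W}_{\tau,b}$; hence $H^{2,0}(\mathcal{W}_{\tau,b})=\iota_\tau^*H^{2,0}(\mathcal{K}_b)\subset \mathcal{L}_b\otimes\CC$ for every $b\in B$. In other words, the period of $\mathcal{W}_{\tau,b}$ is constrained to move only inside the flat subspace $\mathcal{L}_b$.

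Next I would observe that any rational class in the flat orthogonal complement $\mathcal{L}^\perp$ stays of type $(1,1)$ on every fibre. Since $q$ is flat and $\mathcal{L}$ is a sub-variation, $\mathcal{L}^\perp$ is again a rational sub-variation, and for a class $c\in\mathcal{L}^\perp$ one has $q(c,\sigma_b)=0$ because $\sigma_b\in\mathcal{L}_b\otimes\CC$; as $c$ is real, this forces $c\in H^{1,1}(\mathcal{W}_{\tau,b})$, so $c$ is a Hodge class. (Parallel transport is harmless here: every class of $\mathcal{L}^\perp$ remains Hodge, so the conclusion is independent of the monodromy action on the $s'_{\tau,\alpha}$.) It therefore suffices to check that $s'_{\tau,\alpha}\in L^\perp$ at the central fibre $W_\tau$.

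This final step is an elementary computation. Using $H^2(K^3(A),\QQ)=H^2(A,\QQ)\oplus\QQ\xi$, the space $L$ is spanned by $\iota_\tau^*(H^2(A,\QQ))$ together with $\xi_{|_{W_\tau}}$, so I split the verification in two. First, $\iota_\tau^*(H^2(A,\QQ))$ is contained in the summand $\pi^\tau_*(H^2(A,\QQ))$, which is $q$-orthogonal to the span of the pairwise orthogonal $-2$ classes $\delta_\tau, s_{\tau,\beta}$; hence $s'_{\tau,\alpha}=4s_{\tau,\alpha}-\delta_\tau$ is automatically orthogonal to $\iota_\tau^*(H^2(A,\QQ))$. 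Second, inserting $\xi_{|_{W_\tau}}=2\delta_\tau+\tfrac12\sum_{\beta} s_{\tau,\beta}$ from Proposition \ref{prop:geometricInput} and using $q(s_{\tau,\alpha},s_{\tau,\beta})=-2\delta_{\alpha\beta}$, $q(\delta_\tau,\delta_\tau)=-2$, $q(s_{\tau,\alpha},\delta_\tau)=0$, one gets $q_{W_\tau}(s'_{\tau,\alpha},\xi_{|_{W_\tau}})=-4+4=0$. The coefficients $4$ and $-1$ are exactly what makes this pairing vanish, which is the reason behind Definition \ref{def:s'}.

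The one point I would flag as the conceptual heart of the proof is the middle step: translating the assertion \emph{``stays Hodge on every induced deformation''} into the static orthogonality condition $s'_{\tau,\alpha}\perp L$. This rests entirely on the fact that the restricted symplectic form confines the moving period $\sigma_b$ to the $7$-dimensional subspace $\mathcal{L}_b$; once this is in place, both the variation-of-Hodge-structure argument and the bilinear-form computation are routine.
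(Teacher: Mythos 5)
Your proposal is correct and follows essentially the same route as the paper: reduce everything to the $q_{W_\tau}$-orthogonality of $s'_{\tau,\alpha}$ to the image of $\iota_\tau^*\colon H^2(K^3(A),\QQ)\to H^2(W_\tau,\QQ)$, and conclude because that image contains the symplectic form of every fibre of the induced family. The only minor difference is that you justify orthogonality to $\iota_\tau^*(H^2(A,\QQ))$ via the orthogonal splitting of $H^2(W_\tau,\QQ)$ together with the (asserted, not proved) containment $\iota_\tau^*(H^2(A,\QQ))\subset\pi^\tau_*(H^2(A,\QQ))$ --- a fact which the paper itself establishes by specializing to a general torus $A$ and comparing transcendental lattices, which is exactly the argument the paper's proof of this lemma uses directly.
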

\begin{proof}
	Each $s_{\tau,\alpha}'$ is orthogonal to $\iota^*_{\tau}(H^2(K^3(A),\ZZ))$. To prove this, we may assume that $A$ is a general complex torus such that $H^2(A,\ZZ)$ is an irreducible Hodge structure. Then $H^2(K^3(A),\ZZ)=H^2(A,\ZZ)\oplus \ZZ\cdot \xi$, and the first summand equals the transcendental part $H^2_{\mathrm{tr}}(K^3(A),\ZZ)$. Hence, $s'_{\tau,\alpha}\in\NS(W_{\tau})$ is orthogonal to~$\iota^*_{\tau}(H^2(A,\QQ))=H^2_{\mathrm{tr}}(W_{\tau},\QQ)$. By Proposition \ref{prop:geometricInput}, we have $q_{W_{\tau}}(\iota_{\tau}^*(\xi), s'_{\tau,\alpha})=0$, and we conclude that $s'_{\tau,\alpha}$ is orthogonal to $\iota_{\tau}^*(H^2(K^3(A),\ZZ))$. 
	
	Consider now a deformation as in the statement and let $s'_b \in H^2(\mathcal{W}_{\tau, b},\ZZ)$ be a class obtained via parallel transport of some $s'_{\tau, \alpha}$, for some $b\in B$. 
	By the above, $s'_{b}$ is orthogonal to the image of $\iota^*_{\tau,b}\colon H^2(\mathcal{K}_b, \ZZ) \to H^2(\mathcal{W}_{\tau,b},\ZZ)$ of the pull-back; but this image contains a symplectic form, and hence $s'_b$ is a Hodge class.
\end{proof}

	\subsection{Restriction of canonical Hodge classes}
	We will now explicitly describe the restriction to $\iota_{\tau}\colon W_{\tau}\hookrightarrow K^3(A)$ of the canonical Hodge classes in $H^4(K^3(A),\QQ)$ from Proposition \ref{prop:canonicalH4}. First, we have the following. 
	
	\begin{lemma}\label{lem:restrictCanonical}
		Let $\omega\in H^4(K^3(A), \QQ)$ be a canonical Hodge class. Then its pull-back~$\iota_{\tau}^*(\omega)\in H^4(W_{\tau}, \QQ)$ is a linear combination of the following $17$ classes: 
		\[
		\iota^*_{\tau} \left(\overline{q}_{K^3(A)}\right); \ \ \ \ \ \sum_{\alpha\in A_{2,\tau}} s'_{\tau, \alpha} \cdot s'_{\tau, \alpha+\theta}, \ \text{for } \theta\in A_2.
		\]
	\end{lemma}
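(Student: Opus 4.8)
The plan is to show that $\iota_\tau^*(\omega)$, which a priori lives in the full space $H^4(W_\tau,\QQ)=\mathrm{Sym}^2(H^2(W_\tau,\QQ))$, is forced into the much smaller span of the $17$ listed classes by combining the deformation-invariance of $\omega$ with the Hodge-theoretic results of the previous subsection. First I would recall that $\omega$ is a \emph{canonical} Hodge class on $K^3(A)$, so it remains Hodge on every deformation; by Remark \ref{rmk:deformations} such a deformation $\mathcal{K}\to B$ restricts to a deformation $\mathcal{W}_\tau\to B$ of $W_\tau$, and naturality of the pull-back shows that $\iota_\tau^*(\omega)$ remains a Hodge class on $\mathcal{W}_{\tau,b}$ for every $b\in B$. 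In other words, $\iota_\tau^*(\omega)$ is a Hodge class on $W_\tau$ that stays Hodge under all these induced deformations.

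The key is then to identify which classes in $H^4(W_\tau,\QQ)$ have this persistence property. For a very general deformation the transcendental part $H^2_{\mathrm{tr}}$ of $W_\tau$ is an irreducible Hodge structure, and the Hodge classes in $\mathrm{Sym}^2(H^2(W_\tau,\QQ))$ that persist are exactly the symmetric squares of the monodromy-invariant sublattice of $H^2(W_\tau,\QQ)$ together with the Beauville--Bogomolov class $\overline{q}_{W_\tau}$ (which is always canonical). By Lemma \ref{lem:Hodge}, the classes $s'_{\tau,\alpha}$ remain Hodge on all the induced deformations, while $\iota_\tau^*(H^2(K^3(A),\QQ))$ clearly does too, being pulled back from a deformation of the ambient space. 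Conversely, the argument of Lemma \ref{lem:Hodge} shows that $\iota_\tau^*(H^2(K^3(A),\QQ))$ and the span of the $s'_{\tau,\alpha}$ together exhaust the part of $H^2(W_\tau,\QQ)$ that is persistently of Hodge type, since the remaining directions become transcendental for a general deformation. Hence $\iota_\tau^*(\omega)$ must lie in $\mathrm{Sym}^2$ of this persistent part, i.e.\ in the span of products of the $\iota_\tau^*$-image classes and the $s'_{\tau,\alpha}$, plus a multiple of $\overline{q}_{W_\tau}$.

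To cut this down further to the stated $17$ classes, I would invoke the $\Gamma$-equivariance. By Proposition \ref{prop:groupAction}.$(ii)$, the residual group $G/\langle(\tau,-1)\rangle\cong (\ZZ/2\ZZ)^4\cong A_2$ acts on $W_\tau$ permuting the $s'_{\tau,\alpha}$ according to the translation action of $A_2$ on the torsor $A_{2,\tau}$, and $\omega$ being canonical (hence invariant under $\Aut_0$) forces $\iota_\tau^*(\omega)$ to be invariant under this action. An $A_2$-invariant element of $\mathrm{Sym}^2$ of the span of the $s'_{\tau,\alpha}$ is precisely a linear combination of the orbit sums $\sum_{\alpha} s'_{\tau,\alpha}\cdot s'_{\tau,\alpha+\theta}$ for $\theta\in A_2$; and the cross-terms pairing $\iota_\tau^*(H^2)$ with the $s'_{\tau,\alpha}$ average to zero by invariance (any such term is anti-invariant under a suitable involution, or its orbit sum vanishes because $\sum_\alpha s'_{\tau,\alpha}=0$ up to the relation forced by $\xi_{|W_\tau}$), while the purely $\iota_\tau^*(H^2)$ contribution is itself canonical and hence absorbed into $\overline{q}_{W_\tau}=\tfrac{5}{6}c_2(W_\tau)$, which equals a multiple of $\iota_\tau^*(\overline{q}_{K^3(A)})$ up to the $s'$-terms. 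This yields exactly the $17$ listed generators.

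The main obstacle I anticipate is the precise bookkeeping in this last reduction: showing cleanly that the $A_2$-invariant persistent Hodge classes are spanned by $\iota_\tau^*(\overline{q}_{K^3(A)})$ and the orbit sums, with \emph{no} surviving cross-terms between $\iota_\tau^*(H^2)$ and the $s'_{\tau,\alpha}$. This requires carefully using that the $s'_{\tau,\alpha}$ are orthogonal to $\iota_\tau^*(H^2(K^3(A),\QQ))$ (established in Lemma \ref{lem:Hodge}) so that in the symmetric square the $\iota_\tau^*(H^2)\otimes\langle s'_{\tau,\alpha}\rangle$ block is orthogonal to both pure blocks, and then arguing that its $A_2$-invariant part vanishes because $A_2$ acts trivially on $\iota_\tau^*(H^2)$ but nontrivially (with zero orbit sum after accounting for the $\xi$-relation) on the $s'_{\tau,\alpha}$. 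Getting the exact count to match the dimension $17$ of canonical Hodge classes in degree $4$ is the delicate point, and I would confirm it against the intersection matrix of Lemma \ref{lem:intersectionMatrix}.
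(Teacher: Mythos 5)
Your overall skeleton matches the paper's proof (deformation-persistence of $\iota_\tau^*(\omega)$, the orthogonal splitting $H^2(W_\tau,\QQ)=\iota_\tau^*(H^2(K^3(A),\QQ))\oplus\langle s'_{\tau,\alpha}\rangle$, and $A_2$-invariance), but two of your key steps are wrong as stated. First, canonical Hodge classes are \emph{not} invariant under all of $\Aut_0$: the group $\Gamma$ permutes the classes $w_\tau$ nontrivially and has no invariants at all in the summand $240\QQ$, so ``canonical hence invariant under $\Aut_0$'' is false. The invariance you actually need is under $G$, and in the paper this is extracted from Proposition \ref{prop:canonicalH4}: the canonical classes in degree $4$ are spanned by $c_2$ and the $w_\tau$, each of which is fixed by $G$ (Proposition \ref{prop:groupAction}.$(i)$). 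Without citing that explicit basis, the $G$-invariance of an arbitrary canonical $\omega$ is not available to you.

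Second, your elimination of the cross-terms does not work. You assert that the orbit sums of $\iota_\tau^*(\gamma)\cdot s'_{\tau,\alpha}$ vanish because $\sum_\alpha s'_{\tau,\alpha}=0$ ``up to the relation forced by $\xi_{|W_\tau}$''; but $\sum_\alpha s'_{\tau,\alpha}=4\sum_\alpha s_{\tau,\alpha}-16\delta_\tau\neq 0$, since the $s_{\tau,\alpha}$ and $\delta_\tau$ are mutually orthogonal $(-2)$-classes. The $A_2$-invariant part of the block $\iota_\tau^*(H^2)\cdot\langle s'_{\tau,\alpha}\rangle$ is the nonzero $7$-dimensional space $\iota_\tau^*(H^2(K^3(A),\QQ))\cdot\bigl(\sum_\alpha s'_{\tau,\alpha}\bigr)$, so group invariance alone cannot remove it. What kills this block (and cuts $\mathrm{Sym}^2(\iota_\tau^*H^2)$ down to multiples of $\iota_\tau^*(\overline{q}_{K^3(A)})$) is the Hodge-theoretic point you gesture at but then abandon: for a very general deformation, $\iota_\tau^*(H^2)$ carries an irreducible weight-two Hodge structure with no rational $(1,1)$-classes, so the cross block contains no persistent Hodge classes and the pure block contributes only the class dual to the form. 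Your intermediate conclusion that $\iota_\tau^*(\omega)$ lies in ``$\mathrm{Sym}^2$ of the persistent part'' is vacuous, since the subspace you describe is all of $H^2(W_\tau,\QQ)$; the reduction must be carried out blockwise, as in the paper.
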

	\begin{proof}
		By Proposition \ref{prop:canonicalH4}, every canonical Hodge class in $H^4(K^3(A),\QQ)$ is invariant under the action of $G=A_2\times\langle -1\rangle$. The group $G$ induces an action of $A_2$ on $W_{\tau}$, such that $\theta \in A_2$ maps $s'_{\tau,\alpha}$ to $s'_{\tau, \alpha+\theta}$ (see Remark \ref{rmk:actionDivisors}).
		
		Recall from Remark \ref{rmk:deformations} that any deformation of $K^3(A)$ induces a deformation of $W_{\tau}$. Considering the orthogonal decomposition
		\[
		H^2(W_{\tau},\QQ) = \iota^*_{\tau}(H^2(K^3(A),\QQ)) \oplus \langle s'_{\tau, \alpha} \rangle_{\alpha\in A_{2,\tau}}
		\] 
		and the induced splitting of $H^4(W_{\tau},\QQ)$, we see that the classes in $H^4(W_{\tau},\QQ)$ staying Hodge on every such deformation are the multiples of the pull-back of $\overline{q}_{K^3(A)}$ and those contained in the subspace~$\mathrm{Sym}^2 (\langle s'_{\tau, \alpha} \rangle_{\alpha\in A_{2,\tau}})$. The $A_2$-invariants in this subspace are spanned by the $16$ vectors~$\sum_{\alpha\in A_{2,\tau}} s'_{\tau, \alpha}\cdot s'_{\tau, \alpha+\theta}$, for $\theta$ varying in $A_2$. 
	\end{proof}
	\begin{remark}\label{rmk:changeBasis}
		For any $\theta\in A_2$, we have
		\begin{equation*}\label{eq:changeBasis}
			\sum_{\alpha\in A_{2,\tau}} s'_{\tau, \alpha}\cdot s'_{\tau, \alpha+\theta} = 16\cdot \delta_{\tau}^2 + 16\cdot \sum_{\alpha\in A_{2,\tau}} s_{\tau,\alpha}\cdot s_{\tau, \alpha+\theta} - 8\cdot \delta_{\tau}\cdot \sum_{\alpha\in A_{2,\tau}} s_{\tau, \alpha}.
		\end{equation*}
		In particular, the pull-back $\iota^*_{\tau}(\omega)$ of a canonical Hodge class $\omega\in H^4(K^3(A),\QQ)$ is a linear combination of the vectors of Lemma \ref{lem:intersectionMatrix}.
	\end{remark}
	
	\begin{lemma}\label{lem:restrictionBB}
		For any $\tau\in A_2$, we have in $H^4(W_{\tau},\QQ)$:
		\begin{align*}			\iota_{\tau}^*\left(\overline{q}_{K^3(A)}\right) & = 2\overline{q}_{W_{\tau}} + \frac{1}{2}\delta_{\tau}^2  +
			\frac{31}{32} \sum_{\alpha\in A_{2,\tau}} s_{{\tau}, \alpha}^2 - \frac{1}{32}\sum_{0\neq \theta\in A_2} \sum_{\alpha\in A_{2,\tau}} s_{{\tau}, \alpha}\cdot s_{{\tau},\alpha+\theta} -\frac{1}{4}\delta_{\tau}\cdot \sum_{\alpha} s_{{\tau}, \alpha} .
		\end{align*}
	\end{lemma}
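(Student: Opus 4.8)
The plan is to exploit that the restriction $\iota_\tau^*\colon H^\bullet(K^3(A),\QQ)\to H^\bullet(W_\tau,\QQ)$ is a ring homomorphism, and to apply it to the explicit expression of $\overline{q}_{K^3(A)}$ as a sum of squares. By Remark \ref{rmk:BBclass}, choosing an orthogonal basis $a_1,\dots,a_6$ of $H^2(A,\QQ)$ and recalling $H^2(K^3(A),\QQ)=H^2(A,\QQ)\oplus\QQ\cdot\xi$ with $q_{K^3(A)}(\xi,\xi)=-8$, one has
\[
\overline{q}_{K^3(A)} = \sum_{i=1}^{6} \frac{a_i^2}{q_{K^3(A)}(a_i,a_i)} \;+\; \frac{\xi^2}{-8}.
\]
Since $\iota_\tau^*$ preserves cup products, it suffices to compute the restriction of the two summands separately and add them.

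For the second summand I would substitute the formula $\iota_\tau^*(\xi)=2\delta_\tau+\tfrac12\sum_{\alpha} s_{\tau,\alpha}$ of Proposition \ref{prop:geometricInput} and expand the square, splitting $\bigl(\sum_\alpha s_{\tau,\alpha}\bigr)^2$ into its diagonal part $\sum_\alpha s_{\tau,\alpha}^2$ and its off-diagonal part $\sum_{0\neq\theta}\sum_\alpha s_{\tau,\alpha}\cdot s_{\tau,\alpha+\theta}$. Dividing by $-8$ contributes
\[
\frac{\iota_\tau^*(\xi)^2}{-8} = -\tfrac12\delta_\tau^2 -\tfrac14\delta_\tau\cdot\!\sum_\alpha s_{\tau,\alpha} -\tfrac1{32}\sum_\alpha s_{\tau,\alpha}^2 -\tfrac1{32}\sum_{0\neq\theta}\sum_\alpha s_{\tau,\alpha}\cdot s_{\tau,\alpha+\theta},
\]
which already produces the cross-terms (with the uniform coefficient $-\tfrac1{32}$ appearing automatically, with no appeal to symmetry) and the $-\tfrac14$ coefficient of the statement.

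The first summand is where the real work lies. Under the identification $W_\tau=\Km^\tau(A)^{[2]}$, the map $\iota_\tau^*$ carries $H^2(A,\QQ)$ isomorphically onto $\pi^\tau_*(H^2(A,\QQ))$, the orthogonal complement of $\langle \delta_\tau, s_{\tau,\alpha}\rangle$ inside $H^2(W_\tau,\QQ)$. The key quantitative input is that this restriction multiplies the Beauville--Bogomolov form by $2$, i.e.\ $q_{W_\tau}(\iota_\tau^*a,\iota_\tau^*a)=2\,q_{K^3(A)}(a,a)$ for $a\in H^2(A,\QQ)$, which is the scaling of the intersection form under the degree-$2$ Kummer cover $\pi^\tau$ (see \cite[Chapter 3, \S2.5]{huyK3}). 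Granting this, expanding $\overline{q}_{W_\tau}$ in the orthogonal basis $\{\iota_\tau^*a_i\}\cup\{s_{\tau,\alpha}\}\cup\{\delta_\tau\}$ of $H^2(W_\tau,\QQ)$ and using that the latter two families consist of $(-2)$-classes yields
\[
\sum_{i=1}^{6} \frac{\iota_\tau^*(a_i)^2}{q_{K^3(A)}(a_i,a_i)} = 2\,\overline{q}_{W_\tau} + \sum_\alpha s_{\tau,\alpha}^2 + \delta_\tau^2 .
\]

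Adding the two contributions then gives the asserted formula: the coefficient of $\delta_\tau^2$ becomes $1-\tfrac12=\tfrac12$ and that of $\sum_\alpha s_{\tau,\alpha}^2$ becomes $1-\tfrac1{32}=\tfrac{31}{32}$, while the $\overline{q}_{W_\tau}$, cross-term, and $\delta_\tau\cdot\sum_\alpha s_{\tau,\alpha}$ coefficients are as above. The main obstacle is precisely this first summand: one must check that $\iota_\tau^*$ maps $H^2(A,\QQ)$ into the orthogonal complement of the exceptional classes (which follows, as in the proof of Lemma \ref{lem:Hodge}, by specializing to a general torus $A$ for which $H^2(A,\QQ)$ is irreducible and transcendental) and correctly pin down the scaling factor $2$ of the Kummer construction; the rest is a routine bookkeeping of cup products.
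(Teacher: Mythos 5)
Your proposal is correct and follows essentially the same route as the paper: expand $\overline{q}_{K^3(A)}$ in an orthogonal basis via Remark \ref{rmk:BBclass}, restrict using $\iota_\tau^*(\xi)=2\delta_\tau+\tfrac12\sum_\alpha s_{\tau,\alpha}$, and compare with the analogous expansion of $\overline{q}_{W_\tau}$ in the orthogonal basis $\{\iota_\tau^*a_i\}\cup\{s_{\tau,\alpha}\}\cup\{\delta_\tau\}$; all the coefficients come out as in the paper. The one place where you are weaker than the paper is the scaling factor: you justify $q_{W_\tau}(\iota_\tau^*a,\iota_\tau^*a)=2\,q_{K^3(A)}(a,a)$ by appeal to the degree-$2$ Kummer cover, which implicitly assumes $\iota_\tau^*|_{H^2(A,\QQ)}$ coincides with $\pi^\tau_*$ on the nose (not merely up to a scalar), whereas the paper derives the factor $2$ for all of $H^2(K^3(A),\QQ)$ from the Fujiki relation, $C(w_\tau)=12$ and the Fujiki constant $3$ of $\mathrm{K}3^{[2]}$-manifolds, fixing the sign by evaluating on K\"ahler classes. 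You should replace the Kummer-cover heuristic by that Fujiki-constant computation (all inputs are already available in Lemma \ref{lem:firstIntersections}) to make the normalization airtight.
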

	\begin{proof}
		We note that, for any $\gamma\in H^2(K^3(A), \QQ)$, we have $$q_{W_{\tau}}(\iota^*_{\tau}(\gamma), \iota^*_{\tau}(\gamma)) = 2q_{K^3(A)}(\gamma, \gamma).$$
			Indeed, as the Fujiki constant of $W_{\tau}$ is $3$, we find
			\begin{align*}
				q_{W_{\tau}} (\iota^*_{\tau} (\gamma), \iota^*_{\tau} (\gamma))^2 &= {\frac{1}{3} \int_{W_{\tau}} \iota_{\tau}^*(\gamma)^4} = {\frac{1}{3} \int_{K^3(A)} w_{\tau} \cdot \iota_{\tau}^*(\gamma)^4} = {\tfrac{C(w_{\tau})}{3} q_{K^3(A)} (\gamma, \gamma)^2},
			\end{align*}
			which equals $4 q_{K^3(A)} (\gamma, \gamma)^2$ because $C(w_{\tau})=12$, by Lemma \ref{lem:firstIntersections}. As K\"ahler classes have positive Beauville-Bogomolov square, we find $q_{W_{\tau}}(\iota_{\tau}^*(\gamma), \iota_{\tau}^*(\gamma)) = 2q_{K^3(A)} (\gamma, \gamma)$.
		
		We let $\tilde{\lambda}^+_i, \tilde{\lambda}^-_i$, $i=1,2,3$, be an orthogonal basis of $H^2(A,\QQ)$ such that $\tilde{\lambda}_i^\pm$ has self-intersection $\pm 2$. The $\tilde{\lambda}_i^{\pm}$ and $\xi$ give an orthogonal basis of $H^2(K^3(A),\QQ)$, and 
		$$\overline{q}_{K^3(A)} = \frac{1}{2} \left(\sum_{i=1}^3 (\tilde{\lambda}^+_i)^2 - \sum_{i=1}^3 (\tilde{\lambda}^-_i)^2\right) -\frac{1}{8} \xi^2. $$ 
		
		Set $\lambda_i^\pm \coloneqq \iota^*_{\tau}(\tilde{\lambda}_i^{\pm})$. Then the classes $\lambda_i^+,\lambda_i^-$, $i=1,2,3$; $s_{\tau,\alpha}$, $\alpha\in A_{2,\tau}$; $\delta_{\tau}$, are an orthogonal basis of $ H^2(W_{\tau},\QQ)$, with $q_{W_{\tau}}(\lambda_i^{\pm}, \lambda_i^{\pm})=\pm 4$. Indeed, if $\pi^{\tau}\colon A\dashrightarrow \Km^{\tau}(A)$ denotes the natural rational map, we have the orthogonal decomposition
		\[
		H^2(W_{\tau},\QQ) = \pi_{*}^{\tau}(H^2(A,\QQ))\oplus\bigoplus_{\alpha\in A_{2,\tau}} \QQ\cdot s_{\tau,\alpha} \oplus \QQ\cdot \delta_{\tau}.
		\]
		The restriction $\iota^*_{\tau}\colon H^2(K^3(A),\QQ)\to H^2(W_{\tau},\QQ)$ induces an isomorphism between the first summand in $H^2(K^3(A),\QQ)=H^2(A,\QQ)\oplus\QQ\cdot\xi$ and $\pi_{*}^{\tau}(H^2(A,\QQ))$. To see this, we may assume that $A$ is a general complex torus. Then the summand $H^2(A,\QQ)\subset H^2(K^3(A),\QQ)$ (resp. $\pi_{*}^{\tau}(H^2(A,\QQ))\subset H^2(W_{\tau},\QQ)$)
		is the transcendental part of the cohomology of $K^3(A)$ (resp. of $W_{\tau}$). Since $\iota_{\tau}^*$ is a morphism of Hodge structures, we have our claim.
		
		By Remark \ref{rmk:BBclass}, we have $$
		\overline{q}_{W_{\tau}} = \frac{1}{4} \left(\sum_{i=1}^3 ({\lambda}^+_i)^2 - \sum_{i=1}^3 ({\lambda}^-_i)^2\right) -\frac{1}{2}\sum_{\alpha\in A_{2,\tau}} s_{\tau,\alpha}^2-\frac{1}{2} \delta_{\tau}^2.$$
		From Proposition \ref{prop:geometricInput} we know that $\iota^*_{\tau}(\xi) = 2\delta_{\tau}+\frac{1}{2}\sum_\alpha s_{\tau, \alpha}$. Expanding the square of this class leads to the claimed expression for $\iota_{\tau}^*\left(\overline{q}_{K^3(A)}\right) $. 
	\end{proof}
	
	\begin{lemma} \label{lem:restrictionW}
		Let $\tau\neq \tau' \in A_2$. In $H^4(W_{\tau},\QQ)$, we have:
		\begin{align*}
			\iota_{\tau}^*(w_{\tau'}) & = \frac{2}{5} \overline{q}_{W_{\tau}} + \frac{1}{4} \sum_{\alpha\in A_{2,\tau}} s_{{\tau}, \alpha}^2 - \frac{1}{4} \sum_{\alpha\in A_{2,\tau}} s_{{\tau}, \alpha}\cdot s_{{\tau}, \alpha+\tau+\tau'}.
		\end{align*}
	\end{lemma}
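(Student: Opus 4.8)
The plan is to identify $\iota_\tau^*(W_{\tau'})$ with a fundamental class and then pin it down by intersection numbers. Since $W_\tau$ and $W_{\tau'}$ meet transversely along $V_{\tau,\tau'}$ (Proposition \ref{prop:relevantSubvarieties}.$(iii)$), the pull-back $\iota_\tau^*(W_{\tau'})$ equals the fundamental class $[V_{\tau,\tau'}]\in H^4(W_\tau,\QQ)$ of the K3 surface $V_{\tau,\tau'}\subset W_\tau$. Because $w_{\tau'}$ is a canonical Hodge class (Lemma \ref{lem:deformingW}), Lemma \ref{lem:restrictCanonical} together with Remark \ref{rmk:changeBasis} guarantees that $[V_{\tau,\tau'}]$ lies in the span of the nineteen classes of Lemma \ref{lem:intersectionMatrix}, whose Gram matrix is recorded there and is non-degenerate. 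Hence it suffices to compute the nineteen intersection numbers of $[V_{\tau,\tau'}]$ against those basis classes and to invert the matrix.

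Each such number I would compute by restriction to the surface $V_{\tau,\tau'}$, writing $[V_{\tau,\tau'}]\cdot\mu=\int_{V_{\tau,\tau'}}\mu|_{V_{\tau,\tau'}}$. For the classes built from the $s_{\tau,\alpha}$ and $\delta_\tau$ I would use Proposition \ref{prop:geometricInput1}.$(i)$, which gives $s_{\tau,\alpha}|_{V_{\tau,\tau'}}=r_{\tau,\tau',\bar\alpha}$ and $\delta_\tau|_{V_{\tau,\tau'}}=\tfrac12\sum_{\gamma\in A_{2,\tau'}/\langle\tau+\tau'\rangle}r_{\tau,\tau',\gamma}$, together with the fact that the sixteen curves $r_{\tau,\tau',\beta}$ are pairwise orthogonal $(-2)$-classes on the Kummer surface $V_{\tau,\tau'}$. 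The coset bookkeeping then shows that $\sum_\alpha s_{\tau,\alpha}\cdot s_{\tau,\alpha+\theta}$ restricts nontrivially only for $\theta\in\{0,\tau+\tau'\}$ (exactly the cases $\bar\alpha=\overline{\alpha+\theta}$), giving $[V_{\tau,\tau'}]\cdot\sum_\alpha s_{\tau,\alpha}^2=[V_{\tau,\tau'}]\cdot\sum_\alpha s_{\tau,\alpha}s_{\tau,\alpha+\tau+\tau'}=-32$, while the $\delta_\tau$-contributions $\delta_\tau^2$ and $\delta_\tau\cdot\sum_\alpha s_{\tau,\alpha}$ pair with $[V_{\tau,\tau'}]$ through curves coming from opposite sides, so that in particular $[V_{\tau,\tau'}]\cdot(\delta_\tau\sum_\alpha s_{\tau,\alpha})=0$. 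For the remaining class $\overline q_{W_\tau}$ I would instead use $c_2(W_\tau)=\tfrac65\overline q_{W_\tau}$ (see \S\ref{subsec:K3[2]}) and the normal bundle sequence $0\to T_{V_{\tau,\tau'}}\to T_{W_\tau}|_{V_{\tau,\tau'}}\to N_{V_{\tau,\tau'}|W_\tau}\to 0$, whose terms have trivial first Chern class, to obtain $\deg c_2(W_\tau)|_{V_{\tau,\tau'}}=\chi(V_{\tau,\tau'})+\deg c_2(N_{V_{\tau,\tau'}|W_\tau})=24+12=36$ by Proposition \ref{prop:geometricInput1}.$(ii)$, and hence $[V_{\tau,\tau'}]\cdot\overline q_{W_\tau}=30$.

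With these values the block structure of the Gram matrix of Lemma \ref{lem:intersectionMatrix} makes the inversion immediate: the diagonal entries force the coefficient of $\delta_\tau\cdot\sum_\alpha s_{\tau,\alpha}$ and of every $\sum_\alpha s_{\tau,\alpha}\cdot s_{\tau,\alpha+\theta}$ with $\theta\neq 0,\tau+\tau'$ to vanish and give $-\tfrac14$ for $\theta=\tau+\tau'$, while the $3\times3$ block in the variables $\{\overline q_{W_\tau},\delta_\tau^2,\sum_\alpha s_{\tau,\alpha}^2\}$ is solved by $(\tfrac25,0,\tfrac14)$, reproducing the asserted formula. The main point requiring care is precisely the coset bookkeeping of Proposition \ref{prop:geometricInput1}.$(i)$ that isolates the two values $\theta\in\{0,\tau+\tau'\}$ and makes the $\delta_\tau^2$- and $\delta_\tau\cdot\sum_\alpha s_{\tau,\alpha}$-coefficients drop out; the intersection with $\overline q_{W_\tau}$ is the only number that genuinely needs the Chern-class computation rather than a direct restriction of divisor classes.
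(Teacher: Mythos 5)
Your proposal is correct and follows essentially the same route as the paper: expand $\iota_\tau^*(w_{\tau'})=[V_{\tau,\tau'}]$ in the basis of Lemma \ref{lem:intersectionMatrix}, compute its pairings with those classes by restricting to $V_{\tau,\tau'}$ via Proposition \ref{prop:geometricInput1} (with the $\overline q_{W_\tau}$-pairing obtained from $c_2(W_\tau)=\tfrac65\overline q_{W_\tau}$ and the normal bundle sequence), and solve the resulting linear system. The only cosmetic slip is lumping $\delta_\tau^2$ with the ``opposite sides'' vanishing argument — its pairing with $[V_{\tau,\tau'}]$ is $-4$, not $0$ — but your stated solution $(\tfrac25,0,\tfrac14)$ of the $3\times 3$ block shows you used the correct value.
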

	\begin{proof}
		From Proposition \ref{prop:geometricInput1} we obtain the following intersection numbers:
		\begin{align*}
			\iota_{\tau}^*(w_{\tau'})\cdot \delta_{\tau}^2 & =\int_{V_{\tau, \tau'}} {\delta_{\tau}}_{|_{V_{\tau, \tau'}}}^2 = -4 ;\\
			\iota_{\tau}^*(w_{\tau'})\cdot \delta_{\tau}\cdot s_{\tau, \alpha} & =0 \ \ \text{ for any } \alpha\in A_{2,\tau}; \\
			\iota_{\tau}^*(w_{\tau'})\cdot s_{\tau,\alpha} \cdot s_{{\tau},\alpha+\theta} & =\begin{cases} -2 & \text{if } \theta\in \{0, \tau+\tau'\}; \\
				0 & \text{else}.
			\end{cases}
		\end{align*}
		Moreover, $\iota^*_{\tau}(w_{\tau'})\cdot \overline{q}_{W_{\tau}}= 30$.
		Indeed, since $\iota^*_{\tau}(w_{\tau'})$ is the fundamental class of $V_{\tau,\tau'}\subset W_{\tau}$ and $\overline{q}_{W_{\tau}}= \frac{5}{6} c_2(W_{\tau})$, the normal bundle sequence and Proposition \ref{prop:geometricInput1}.$(ii)$ yield
		\[
		\iota^*_{\tau}(w_{\tau'})\cdot \overline{q}_{W_{\tau}}= \frac{5}{6} \iota^*_{\tau}(w_{\tau'}) \cdot c_2(W_{\tau}) = \frac{5}{6} \left(\deg c_2(V_{\tau,\tau'})  + \deg c_2\bigl(N_{V_{\tau,\tau'}|W_{\tau}}\bigr) \right) = 30.
		\] 
		
		 By Remark \ref{rmk:changeBasis}, we can write~$\iota^*_{\tau}(w_{\tau'})$ as linear combination of the classes of Lemma \ref{lem:intersectionMatrix}:
		\[
		\iota_{\tau}^*(w_{\tau'}) = \eta_1 \overline{q}_{W_{\tau}} + \eta_2 \delta_{\tau}^2 + \eta_3 \delta_{\tau}\cdot\sum_{\alpha\in A_{2,\tau}} s_{{\tau},\alpha} + \sum_{\theta\in A_2} \beta_{\theta} \sum_{\alpha\in A_{2,\tau}} s_{{\tau}, \alpha} \cdot s_{{\tau}, \alpha+\theta} ,
		\]
		for suitable rational numbers $\eta_j$ and $\beta_{\theta}$, which we are going to determine using the intersection numbers computed above and the intersection matrix from Lemma \ref{lem:intersectionMatrix}.
		
			First, by the above, $\iota_{\tau}^*(w_{\tau'})$ is orthogonal to $\delta_{\tau}\cdot\sum_{\alpha}s_{\tau,\alpha}$, and hence $\eta_3=0$. Moreover, $\iota_\tau^*(w_{\tau'})$ is orthogonal to $\sum_{\alpha} s_{\tau,\alpha}\cdot s_{\tau,\alpha+\theta}$ unless $\theta=0$ or $\theta=\tau+\tau'$, and hence $\beta_{\theta}=0$ for~$\theta\notin\{0,\tau+\tau'\}$.
			For $\theta= \tau+\tau'$, we find
			\[
			\iota^*_{\tau}(w_{\tau'})\cdot \sum_{\alpha\in A_{2,\tau}}s_{\tau,\alpha}\cdot s_{\tau,\alpha+\tau+\tau'} = -32,
			\]
			which gives $128\beta_{\tau+\tau'} = -32$, i.e., $\beta_{\tau+\tau'}=-\frac{1}{4}$.
			Thus, 
			\[
			\iota^*_{\tau}(w_{\tau'}) = \eta_1 \overline{q}_{W_{\tau}} + \eta_2 \delta_{\tau}^2 +  \beta_{0} \sum_{\alpha\in A_{2,\tau}} s_{{\tau}, \alpha}^2 - \frac{1}{4} \sum_{\alpha\in A_{2,\tau}} s_{\tau, \alpha}\cdot s_{\tau, \alpha+\tau+\tau'}. 
			\]
			
			Computing now the intersection of $\iota^*_{\tau}(w_{\tau'})$ with the three classes $\overline{q}_{W_\tau}$, $\delta_{\tau}^2$ and $\sum_{\alpha} s_{{\tau},\alpha}^2$ via Lemma \ref{lem:intersectionMatrix} we obtain the equations
			\[
			\begin{cases}
				575\eta_1 - 50 \eta_2 - 800\beta_0 = 30;\\
				-50 \eta_1 + 12\eta_2 + 64\beta_0 = -4; \\
				-800 \eta_1 + 64\eta_2 + 1152\beta_0 = -32.
			\end{cases}
			\] 
			Solving this linear system we find $\eta_1=\tfrac{2}{5}$, $\eta_2=0$, $\beta_0=\tfrac{1}{4}$, as claimed.
	\end{proof}
	
	We finally calculate $\iota_{\tau}^*(w_{\tau})$. The strategy is similar to that used in the above proof. 
	\begin{proposition}\label{prop:c2normal}
		In $H^4(W_{\tau}, \QQ)$, we have
		\[
		\iota^*_{\tau} (w_{\tau}) = \frac{8}{5} \overline{q}_{W_{\tau}} + \delta_{\tau}^2 +  \sum_{\alpha\in A_{2,\tau}} s_{\tau, \alpha}^2 - \frac{1}{2}\delta_{\tau}\cdot \sum_{\alpha\in A_{2,\tau}} s_{\tau, \alpha}.
		\]
	\end{proposition}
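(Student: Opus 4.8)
The plan is to identify $\iota_\tau^*(w_\tau)$ with the top Chern class of the normal bundle and then to locate it precisely inside the explicit intersection lattice of Lemma~\ref{lem:intersectionMatrix} by computing only a handful of pairings. By the self-intersection formula, $\iota_\tau^*(w_\tau)=c_2(N_{W_\tau|K^3(A)})$. Since $w_\tau$ is a canonical Hodge class (Lemma~\ref{lem:deformingW}), Lemma~\ref{lem:restrictCanonical} together with Remark~\ref{rmk:changeBasis} shows that $\iota_\tau^*(w_\tau)$ lies in the $19$-dimensional subspace spanned by the classes of Lemma~\ref{lem:intersectionMatrix}; I would write
\[
\iota_\tau^*(w_\tau) = \eta_1\,\overline q_{W_\tau} + \eta_2\,\delta_\tau^2 + \eta_3\,\delta_\tau\cdot\sum_{\alpha\in A_{2,\tau}} s_{\tau,\alpha} + \sum_{\theta\in A_2}\beta_\theta\sum_{\alpha\in A_{2,\tau}} s_{\tau,\alpha}\cdot s_{\tau,\alpha+\theta}.
\]
Moreover, the finer information that $\iota_\tau^*(w_\tau)$ lies in the span of $\iota_\tau^*(\overline q_{K^3(A)})$ and the classes $\sum_\alpha s'_{\tau,\alpha}s'_{\tau,\alpha+\theta}$ (Lemma~\ref{lem:restrictCanonical}) translates, after expanding via Remark~\ref{rmk:changeBasis}, into the two linear relations $\eta_3 = -\tfrac12\eta_2$ and $\eta_2 = \beta_0 + \sum_{\theta\neq 0}\beta_\theta$ among the coefficients.

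Next I would extract the pairings of $\iota_\tau^*(w_\tau)$ against test classes that are available from global intersection numbers already established. By the projection formula, $\iota_\tau^*(w_\tau)\cdot\iota_\tau^*(w_{\tau'}) = w_\tau^2\cdot w_{\tau'} = 12$ for every $\tau'\neq\tau$ (Proposition~\ref{prop:intersectionsW}), $\iota_\tau^*(w_\tau)\cdot\iota_\tau^*(\overline q_{K^3(A)}) = \overline q\cdot w_\tau^2 = 84$ (Remark~\ref{rmk:productq}), and $\iota_\tau^*(w_\tau)^2 = w_\tau^3 = 60$ (Proposition~\ref{prop:intersectionsW}). Substituting the expressions for $\iota_\tau^*(w_{\tau'})$ and $\iota_\tau^*(\overline q_{K^3(A)})$ from Lemmas~\ref{lem:restrictionW} and~\ref{lem:restrictionBB} into the intersection matrix of Lemma~\ref{lem:intersectionMatrix}, the equations coming from the various $\tau'\neq\tau$ all take the same shape except in the single coefficient $\beta_{\tau+\tau'}$; as $\tau+\tau'$ runs over all nonzero classes this forces the off-diagonal coefficients $\beta_\theta$ $(\theta\neq 0)$ to be mutually equal.

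The one pairing not directly given by global numbers is $\iota_\tau^*(w_\tau)\cdot\overline q_{W_\tau}$, and computing it is the crux. Here I would use $\iota_\tau^*(w_\tau)=c_2(N_{W_\tau|K^3(A)})$ and $\overline q_{W_\tau}=\tfrac56 c_2(W_\tau)$ (see \S\ref{subsec:K3[2]}), so that $\iota_\tau^*(w_\tau)\cdot\overline q_{W_\tau} = \tfrac56\int_{W_\tau}c_2(W_\tau)\cdot c_2(N_{W_\tau|K^3(A)})$. By relation~\eqref{eq:c2NW} this integral equals $c_4(K^3(A))\cdot w_\tau - \deg c_4(W_\tau) = 408 - 324 = 84$, using $c_4\cdot w_\tau = 408$ (Remark~\ref{rmk:someValues}) and the Euler characteristic $\deg c_4(W_\tau)=324$ of a $\mathrm{K}3^{[2]}$-manifold (\S\ref{subsec:K3[2]}); hence $\iota_\tau^*(w_\tau)\cdot\overline q_{W_\tau}=70$.

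Finally I would assemble the linear system: the two deformation relations, together with the pairings against $\overline q_{W_\tau}$, against $\iota_\tau^*(\overline q_{K^3(A)})$, and against the $\iota_\tau^*(w_{\tau'})$, give a nondegenerate system in $\eta_1,\eta_2,\eta_3,\beta_0$ and the common off-diagonal value $\beta_\theta$; solving it yields $\eta_1=\tfrac85$, $\eta_2=1$, $\beta_0=1$, $\eta_3=-\tfrac12$ and $\beta_\theta=0$ for $\theta\neq 0$, which is the claimed formula, with the self-intersection value $60$ serving as a consistency check. The main obstacle is that, in contrast to $\iota_\tau^*(w_{\tau'})$ in Lemma~\ref{lem:restrictionW}, the class $\iota_\tau^*(w_\tau)=c_2(N_{W_\tau|K^3(A)})$ is not the fundamental class of an embedded surface, so its pairings cannot be read off by restricting divisor classes to a K3; the argument therefore hinges on producing enough independent relations, and it is precisely the Chern-class computation of $\iota_\tau^*(w_\tau)\cdot\overline q_{W_\tau}$ together with the two deformation constraints that make the system determined.
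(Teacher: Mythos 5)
Your proposal is correct and follows essentially the same route as the paper's proof: identify $\iota_\tau^*(w_\tau)$ with $c_2(N_{W_\tau|K^3(A)})$, use Lemma~\ref{lem:restrictCanonical} to confine it to the span of $\iota_\tau^*(\overline q_{K^3(A)})$ and the $\sum_\alpha s'_{\tau,\alpha}s'_{\tau,\alpha+\theta}$, compute the three pairings $70$, $12$, $84$ exactly as the paper does, equalize the off-diagonal coefficients via Lemma~\ref{lem:restrictionW}, and solve the resulting determined linear system. The only difference is cosmetic (you work in the $s_{\tau,\alpha}$-basis of Lemma~\ref{lem:intersectionMatrix} with the two derived constraints $\eta_3=-\tfrac12\eta_2$ and $\eta_2=\beta_0+\sum_{\theta\neq0}\beta_\theta$, whereas the paper solves directly for the coefficients in the $s'$-basis), and your solution checks out against all the stated intersection numbers, including the consistency check $w_\tau^3=60$.
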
 
	\begin{proof} 
		We start by calculating the following intersection numbers:
		\begin{align*} \label{eq:someintersections}
			\iota_{\tau}^*(w_{\tau})\cdot \overline{q}_{W_\tau} & =70; \\ \iota^*_{\tau}(w_{\tau})\cdot \iota^*_{\tau}(w_{\tau'}) & = 12 \text{ for any } \tau \neq \tau'\in A_2; \\
			\iota^*_{\tau}(w_{\tau})\cdot \iota^*_{\tau}\left(\overline{q}_{K^3(A)}\right)& =84.
		\end{align*}
		Recall that $\iota^*_{\tau}(w_{\tau})$ equals the top Chern class $c_2(N_{W_{\tau}|K^3(A)})$ of the normal bundle. By~\S\ref{subsec:K3[2]}, we have $\overline{q}_{W_{\tau}}=\frac{5}{6} c_2(W_{\tau})$ and $\deg(c_4(W_{\tau})) = 324$. Via \eqref{eq:c2NW} we then find
		\[
		\overline{q}_{W_\tau} \cdot \iota_{\tau}^*(w_{\tau}) = \frac{5}{6} c_2(W_{\tau})\cdot c_2(N_{W_{\tau}|K^3(A)})=\frac{5}{6}\cdot \bigl(c_4(K^3(A))\cdot w_{\tau} - \deg(c_4(W_{\tau}))\bigr) = 70,
		\]
		as $c_4(K^3(A))\cdot w_{\tau}  = 408$ by Remark \ref{rmk:someValues}. For any $\tau\neq \tau'$ we have 
		\[
		\iota^*_{\tau}(w_{\tau})\cdot \iota^*_{\tau}(w_{\tau'}) = \int_{K^3(A)} w_{\tau}^2\cdot w_{\tau'} = 12
		\] 
		by Proposition \ref{prop:intersectionsW}. Finally,
		\[
		\iota^*_{\tau}(w_{\tau}) \cdot \iota_{\tau}^*\left(\overline{q}_{K^3(A)}\right) = \int_{K^3(A)} w^2_{\tau} \cdot \overline{q}_{K^3(A)} = C(\overline{q}_{K^3(A)} \cdot w^2_{\tau}) =84,
		\]
		where we used that $C(w^2_\tau)=12$ (by Remark \ref{rmk:someValues}) and \cite[Proposition 2.4]{BS22} to calculate the generalized Fujiki constant $C(\overline{q}_{K^3(A)}\cdot w^2_\tau)$.
		
		Now, according to Lemma \ref{lem:restrictCanonical} we can write 
		\[
		\iota^*_\tau (w_{\tau}) = \eta \cdot \iota_{\tau}^*\left(\overline{q}_{K^3(A)}\right) + \sum_{\theta\in A_2} \beta_{\theta} \cdot \sum_{\alpha\in A_{2,\tau}} s'_{\tau, \alpha}\cdot s'_{\tau, \alpha+\theta},
		\]
		for some coefficients $\eta$ and $\beta_{\theta}$, $\theta\in A_{2}$. 
		We claim that $\beta_{\theta}=\beta_{\theta'}$ whenever $\theta\neq 0, \theta'\neq 0$. Indeed,~$\iota^*_{\tau}(w_{\tau})\cdot (\iota^*_{\tau} (w_{\tau+\theta})-\iota^*_{\tau} (w_{\tau+\theta'}))=0$, and, by Lemma \ref{lem:restrictionW}, we have $$\iota^*_{\tau} (w_{\tau+\theta})-\iota^*_{\tau} (w_{\tau+\theta'})= -\frac{1}{4} \sum_{\alpha\in A_{2,\tau}} s_{\tau,\alpha} \cdot s_{\tau,\alpha+\theta}+ \frac{1}{4} \sum_{\alpha\in A_{2,\tau}} s_{\tau,\alpha} \cdot s_{\tau,\alpha+\theta'}.$$ 
		Using Remark \ref{rmk:changeBasis} and Lemma \ref{lem:intersectionMatrix}, we obtain the equation
		\[
		-16 \beta_{\theta} \cdot \frac{128}{4}  + 16 \beta_{\theta'} \cdot \frac{128}{4} =0,
		\] which proves the claim. We therefore have
		\begin{align*} 
			\iota^*_{\tau}(w_{\tau}) & = 
			\eta\cdot \iota^*_{\tau}\left(\overline{q}_{K^3(A)}\right) + \beta \sum_{\alpha\in A_{2,\tau}} (s'_{\tau,\alpha})^2 +\gamma \sum_{0\neq\theta\in A_{2}} \sum_{\alpha\in A_{2,\tau}} s'_{\tau,\alpha}\cdot s'_{\tau,\alpha+\theta},
		\end{align*} 
	for three coefficients $\eta,\beta,\gamma$. Via Remark \ref{rmk:changeBasis} and Lemma \ref{lem:restrictionBB}, we write $\iota_{\tau}^*(w_{\tau})$ in terms of the vectors of Lemma \ref{lem:intersectionMatrix} and we calculate the intersection of $\iota^*_{\tau}(w_{\tau})$ with the three classes $\overline{q}_{W_{\tau}}$, $\sum_{\tau'\neq\tau} \iota_{\tau}^*(w_{\tau'})$ and $\iota^*_{\tau}\bigl(\overline{q}_{K^3(A)}\bigr)$ in terms of the coefficients $\eta, \beta, \gamma$. 
	The computation is easily done (with the help of a computer) via Lemma \ref{lem:intersectionMatrix}, using the explicit formulae for $\iota_{\tau}^*(w_{\tau}')$ and $\iota_{\tau}^*\bigl(\overline{q}_{K^3(A)}\bigr)$ found in Lemma~\ref{lem:restrictionW} and Lemma~\ref{lem:restrictionBB} respectively. 
		We obtain the linear system
			\[
		\begin{cases}
			350\eta - 13600\beta - 12000\gamma & = 70; \\
			420\eta - 8640\beta - 22080\gamma & =180;\\
			252\eta - 7616\beta - 6720\gamma & = 84,
		\end{cases}
		\]
		which gives the solution $\eta = \tfrac{4}{5} $, $ \beta = \tfrac{9}{640} $, $\gamma = \tfrac{1}{640} $. Writing $\iota^*_{\tau}(w_{\tau})$ in terms of the vectors of Lemma \ref{lem:intersectionMatrix} leads to the claimed expression.		
	\end{proof}
	
	\section{Canonical Hodge classes of degree six} \label{sec:6}
	
	Our next goal is to show that canonical Hodge classes in the middle cohomology of a $\mathrm{Kum}^3$-manifold $K$ are analytic.
	We shall first determine the subalgebra of the cohomology which is fixed by the group $G$ (\S\ref{subsec:aut0}). We keep using the notation from the previous sections. 
	
	\begin{proposition}\label{prop:Ginvariants}
		In terms of the LLV-decomposition, we have 
		$$H^{\bullet}(K, \QQ)^G = V_{(3)} \oplus V_{(1,1)} \oplus 16V. $$
	\end{proposition}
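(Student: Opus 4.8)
The plan is to reduce to the generalized Kummer sixfold $K=K^3(A)$ and to use that the action of $\Aut_0(K)\supset G$ on $H^\bullet(K,\QQ)$ commutes with the LLV-Lie algebra $\mathfrak{so}(V)$. Hence $H^\bullet(K,\QQ)^G$ is a sub-$\mathfrak{so}(V)$-module, and by Schur's lemma it is determined by the $G$-invariants in the multiplicity space of each isotypic component of \eqref{eq:LLV}. Since $G=A_2\times\langle-1\rangle$ with $A_2=G\cap\Gamma$, I would treat the five summands in turn. The subalgebra $V_{(3)}$ is generated by $H^2(K,\QQ)$, on which $G\subset\Aut_0(K)$ acts trivially, so $G$ fixes $V_{(3)}$; and since $-1$ acts by $-\mathrm{id}$ on the irreducible odd (spinorial) summand (by \cite{oguiso2020no} and irreducibility), the latter contributes no invariants. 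By Lemma~\ref{lem:Gammainvariants} the subgroup $A_2\subset\Gamma$ already acts trivially on $V_{(3)}$, $V_{(1,1)}$ and the odd summand, so what remains is to determine the $G$-action on $16V$, on $V_{(1,1)}$, and on $240\QQ$.

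For $16V$ I would use the classes $w_\tau$, $\tau\in A_2$. By Proposition~\ref{prop:groupAction} the translation $\gamma_\epsilon$ sends $w_\tau$ to $w_{\tau+2\epsilon}$, so every $\epsilon\in A_2=G\cap\Gamma$ (where $2\epsilon=0$) fixes each $w_\tau$, while $-1\in G$ preserves each $W_\tau$ and hence fixes $w_\tau$; thus $G$ fixes the $16$-dimensional subspace $\langle w_\tau\rangle\subset H^4(K,\QQ)$. Using the intersection numbers $w_\sigma^2\cdot w_\tau$ of Proposition~\ref{prop:intersectionsW} together with $\overline{q}\cdot w_\sigma^2=84$, I would check that $\overline{q}\notin\langle w_\tau\rangle$; since the kernel of the projection of $\langle w_\tau\rangle$ onto the degree-$4$ part $M$ of $16V$ consists of canonical Hodge classes in $V_{(3)}\oplus V_{(1,1)}$ of degree $4$, i.e.\ in $\QQ\overline{q}$, this projection is then injective, so the $w_\tau$ map to a basis of the $16$-dimensional multiplicity space $M$. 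As $\Aut_0(K)$ acts on $16V\cong V\otimes M$ only through $M$, independently of the cohomological degree, it follows that $G$ acts trivially on all of $16V$.

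Since $V_{(1,1)}$ is $\mathfrak{so}(V)$-irreducible, to see that $G$ fixes it I only need one nonzero $G$-invariant vector, which then generates the whole summand under $\mathfrak{so}(V)$. A natural candidate is the class $u$ spanning the one-dimensional space of $\Gamma$-invariant canonical Hodge classes in $H^6$, which lies in $V_{(1,1)}$ by Lemma~\ref{lem:Gammainvariants}(ii). I would show $u$ is $(-1)$-invariant either by a parity argument — using the description of $H^\bullet(K^3(A),\QQ)$ in terms of $H^\bullet(A)$ from \cite{dCM04}, in which $a\mapsto-a$ acts by $(-1)^{\deg}$ while the stratum shifts are even, so that $-1$ acts by $(-1)^k$ on $H^k(K,\QQ)$ and in particular trivially in even degree — or geometrically, by checking that $\Sigma:=\sum_{\tau}\sum_{\alpha\in A_{2,\tau}}\iota_{\tau,*}(s'_{\tau,\alpha})$ is a nonzero $G$-invariant canonical Hodge class. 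Indeed $\Sigma$ is $\Gamma$-invariant, each $\iota_{\tau,*}(s'_{\tau,\alpha})$ stays Hodge on deformations by Lemma~\ref{lem:Hodge} (as $W_\tau$ deforms with $K$), and $-1$ fixes each $s'_{\tau,\alpha}$ because $a\mapsto-a$ fixes the nodes of $\Km^\tau(A)$; being $\Gamma$-invariant and canonical, $\Sigma$ is a multiple of $u$, so its non-vanishing forces $-1(u)=u$.

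The main obstacle is the vanishing $(240\QQ)^G=0$, which I would obtain from a Lefschetz computation, and for which it suffices to prove $(240\QQ)^{A_2}=0$. By the previous steps every nonzero translation $g\in A_2$ acts trivially on $V_{(3)}$, $V_{(1,1)}$, $16V$ and the odd summand, hence on all of $H^\bullet(K,\QQ)$ except on the degree-$6$ summand $240\QQ$, where it acts by some $\rho(g)$. Using $e(K)=\int_K c_6=C(c_6)=448$ from \eqref{eq:constants}, the topological Lefschetz fixed-point formula gives $e(K^g)=\bigl(e(K)-240\bigr)+\trace(\rho(g))=208+\trace(\rho(g))$. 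By Proposition~\ref{prop:groupAction}(iii) the fixed locus $K^g$ is a disjoint union of $8$ K3 surfaces, so $e(K^g)=8\cdot24=192$ and therefore $\trace(\rho(g))=-16$ for every $g\neq0$. Averaging over $A_2$ gives $\dim(240\QQ)^{A_2}=\tfrac{1}{16}\bigl(240+15\cdot(-16)\bigr)=0$, whence $(240\QQ)^G\subseteq(240\QQ)^{A_2}=0$. Assembling the five cases yields $H^\bullet(K,\QQ)^G=V_{(3)}\oplus V_{(1,1)}\oplus16V$.
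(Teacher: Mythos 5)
Your strategy coincides with the paper's for four of the five summands: triviality on $V_{(3)}$, absence of invariants in the odd (spinorial) summand, triviality on $16V$ via the $G$-invariant classes $w_\tau$ spanning the degree-$4$ multiplicity space, and the Lefschetz average killing $(240\QQ)^{A_2}$ (using only the translations is a small economy over the paper, which also computes $\trace\bigl((\tau,-1)^*_{|240\QQ}\bigr)=0$). The one genuine error is the first of your two alternatives for $V_{(1,1)}$. The claim that $-1$ acts by $(-1)^k$ on $H^k(K^3(A),\QQ)$ is false: if it were true, the Lefschetz formula would give $\chi(-1)=\sum_i b_i=704$, whereas the fixed locus of $(0,-1)$ is $W_0$ together with $140$ points, so $\chi(-1)=324+140=464$; in fact $\trace\bigl((-1)^*_{|240\QQ}\bigr)=0$, so $-1$ has a $120$-dimensional $(-1)$-eigenspace in $H^6(K^3(A),\QQ)$. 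The parity visible in the de Cataldo--Migliorini description of $H^\bullet(A^{[4]})\cong\bigl(H^\bullet(K^3(A))\otimes H^\bullet(A)\bigr)^{\Gamma}$ says nothing about the summand $240\QQ$, which has no $\Gamma$-invariants. The paper instead proves triviality of $G$ on $V_{(1,1)}$ by computing $h^{4,0}$ and $h^{3,1}$ of $\tilde K$ via the blow-up formula and matching them with those of $Y_K=\tilde K/G$.

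Your second alternative does work and is a genuinely different, more elementary route: $\Sigma=\sum_{\tau}\sum_{\alpha}\iota_{\tau,*}(s'_{\tau,\alpha})$ is a canonical, $\Gamma$-invariant Hodge class, hence lies in the line $V_{(1,1)}\cap H^6$ by Lemma~\ref{lem:Gammainvariants}(ii); it is $G$-invariant; and it is nonzero since already $\bigl(\sum_{\alpha}d_{\tau,\alpha}\bigr)^2=16\cdot(-52)+240\cdot 12=2048$ by Proposition~\ref{prop:intersectionsD}, whose proof does not use the present statement. Two corrections are needed, though. First, for $\tau\neq 0$ the involution $-1$ is congruent to $(\tau,1)$ modulo $\langle(\tau,-1)\rangle$ and hence acts on $W_\tau$ as translation by $\tau$, swapping $s'_{\tau,\alpha}$ with $s'_{\tau,\alpha+\tau}$; it does \emph{not} fix each $s'_{\tau,\alpha}$, only their sum, which is all you need. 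Second, in the $16V$ step the numbers $w_\sigma^2\cdot w_\tau$ together with $\overline q\cdot w_\sigma^2=84$ are not enough to exclude $\overline q\in\langle w_\tau\rangle$: they are consistent with $\overline q=\tfrac{7}{20}\sum_\tau w_\tau$. You need one further datum (e.g.\ $\overline q^3=2772$ against $\overline q^2\cdot w_\tau=252$), or, more simply, the linear independence of the $17$ classes $c_2,w_\tau$ from Proposition~\ref{prop:canonicalH4} combined with the independence of $\overline q$ and $c_2$: the $\Gamma$-invariants of $\langle w_\tau\rangle$ are $\QQ\cdot w$ with $w=8\overline q-3c_2$, so $\overline q\in\langle w_\tau\rangle$ would force a relation between $\overline q$ and $c_2$.
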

	\begin{proof}
		We may and will assume that $K=K^3(A)$ is the generalized Kummer sixfold on an abelian surface $A$.
		Recall that the LLV-decomposition \eqref{eq:LLV} gives 	
		\[
		H^{\bullet}(K,\QQ) = V_{(3)}\oplus V_{(1,1)} \oplus 16V\oplus 240\QQ\oplus V_{\left(\tfrac{3}{2} , \tfrac{1}{2}, \tfrac{1}{2}, \tfrac{1}{2}\right)}.
		\]
		The group $G$ acts trivially on the summand $V_{(3)}$ and it has no invariants in the odd cohomology. Moreover, $G$ acts trivially on the summand $16V$, because, by Proposition~\ref{prop:canonicalH4}, this is generated as LLV-module by the $G$-invariant classes $c_2$ and $w_{\tau}$ for $\tau\in A_2$.		
		
		We prove that $V_{(1,1)}\subset H^{\bullet}(K,\QQ)^{G}$. Let $Y_{K}$ be the manifold of $\mathrm{K}3^{[3]}$-type given by Theorem \ref{thm:associatedK3}.
		By Remark \ref{rmk:iteratedBlowUp}, we have $Y_{K}=\tilde{K}/G$, where $\tilde{K}$ is the successive blow-up of~$K$ along $16$ smooth fourfolds $\bar{W}_\tau$. Each $\bar{W}_\tau$ is birational to the~$\mathrm{K}3^{[2]}$-variety $W_\tau$, and hence $h^{2,0}(\bar{W}_\tau)=1$. Since $h^{3,1}(K)=6$, the blow-up formula for cohomology yields the Hodge numbers $h^{4,0}(\tilde{K})=1$ and $h^{3,1}(\tilde{K}) = 6 + 16 = 22$.		
			On the other hand, we have~$h^{4,0}(Y_K)=1$ and $h^{3,1} (Y_{K})=22$, as for any manifold of $\mathrm{K}3^{[3]}$-type (\cite{GS1993}).
			By construction, we have an isomorphism $H^{\bullet}(Y_K,\QQ)=H^{\bullet}(\tilde{K},\QQ)^G$ and a $G$-equivariant embedding of $H^{\bullet}(K,\QQ)$ into~$H^{\bullet}(\tilde{K},\QQ)$. The Hodge numbers just calculated thus imply that the action of $G$ on $H^4(K,\CC)$ is trivial on $H^{4,0}(K)$ and $H^{3,1}(K)$. As Hodge structure, $V_{(1,1)}\cap H^4(K,\QQ)$ is a Tate twist of $H^2(K,\QQ)$; it must therefore consist of $G$-invariants, because $G$ commutes with the LLV-Lie algebra. We conclude that $V_{(1,1)} $ is contained in $H^{\bullet}(K,\QQ)^G$.   
			
			We next show that $G$ has no invariants in the LLV-summand $240\QQ$.
			To this end, we compute the traces of $g\in G= A_2\times \langle -1\rangle$ acting on this summand.
			Let $$\chi(g) \coloneqq \sum_i (-1)^i \mathrm{tr}\Bigl(g^*_{|_{H^i (K^3(A),\QQ)}}\Bigr).$$ 
			By the Lefschetz trace formula, $\chi(g)$ equals the Euler characteristic of the fixed locus~$K^g$, which consists of $8$ K3 surfaces for non-trivial $g=(\tau,1)$ and of the union of the~$\mathrm{K}3^{[2]}$-manifold $W_{\tau}$ and $140$ isolated points for $g=(\tau, -1)$ (see \cite[Proposition~2.10]{floccariKum3}). 
			Hence,
			\[
			\chi(g) = \begin{cases} 448 & \text{ if } g=1;\\
				464 & \text{ if } g=(\tau, -1) \text{ for } \tau\in A_2;\\
				192 & \text{ if } g=(\tau,1) \text{ for } 0\neq \tau\in A_2.
			\end{cases} 
			\]
			An automorphism $(\tau, \pm 1) \in G$ acts on the odd cohomology of $K$ as multiplication by $\pm 1$. 
			From the above and table \eqref{eq:ranks}, we deduce that
			\[
			\mathrm{tr}(g^*_{|240\QQ}) = \begin{cases} 240 & \text{ if } g=1;\\
				0  & \text{ if } g=(\tau, -1) \text{ for } \tau\in A_2;\\
				-16 & \text{ if } g=(\tau, 1) \text{ for } 0\neq \tau\in A_2.
			\end{cases} 
			\]
			We conclude that 
			\[
			\dim (240\QQ)^G = \frac{1}{|G|} \sum_{g\in G} \mathrm{tr}\left(g^*_{|240\QQ}\right)=\frac{1}{32} (240 - 16\cdot 15) = 0.
			\]
		\end{proof}
		\begin{remark}\label{rmk:characterizationG}
			Automorphisms in $\Aut_0(K)\setminus G$ act non-trivially on $H^4(K,\QQ)$, by Propositions \ref{prop:groupAction} and \ref{prop:canonicalH4}. Therefore, the above proposition characterizes $G$ as the group of automorphisms of $K$ which act trivially on the cohomology in degrees $2$ and $4$.
		\end{remark}
	
	\subsection{Canonical Hodge classes are analytic}
	As in the previous sections, we consider the generalized Kummer manifold $K^3(A)$ on a $2$-dimensional complex torus~$A$, and the submanifolds $\iota_{\tau}\colon W_{\tau}\hookrightarrow K^3(A)$, for $\tau\in A_2$. Recall that we introduced $16$  divisor classes $s'_{\tau,\alpha}$ on each $W_{\tau}$ in Definition~\ref{def:s'}.
	
	\begin{definition} \label{def:canonical3cycles}
		For any $\tau\in A_2$ and $\alpha\in A_{2,\tau}$ we define
		\[
		d_{\tau, \alpha}\coloneqq \iota_{\tau,*}(s'_{\tau, \alpha}) \in H^6(K^3(A),\ZZ).
		\]
	\end{definition} 
	
	\begin{lemma} \label{lem:deformD}
		The classes $d_{\tau, \alpha}$ are canonical Hodge classes, and they remain analytic on any deformation of $K^3(A)$.
	\end{lemma}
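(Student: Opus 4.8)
The plan is to adapt the proof of Lemma~\ref{lem:deformingW}, incorporating the refined deformation statement of Lemma~\ref{lem:Hodge}. Analyticity on $K^3(A)$ itself is immediate: by Definition~\ref{def:divisors} the classes $s_{\tau,\alpha}$ and $\delta_{\tau}$ are $\QQ$-multiples of fundamental classes of the divisors $S_{\tau,\alpha}$ and $D_{\tau}$ on $W_{\tau}$, so $s'_{\tau,\alpha}=4s_{\tau,\alpha}-\delta_{\tau}$ is analytic on $W_{\tau}$; pushing forward under the closed embedding $\iota_{\tau}$ then yields an analytic class $d_{\tau,\alpha}$ on $K^3(A)$, since the Gysin map sends analytic classes to analytic classes. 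For this last fact I would invoke Grothendieck--Riemann--Roch for the immersion $\iota_{\tau}$: writing $s'_{\tau,\alpha}=c_1(L)$ for a line bundle $L$ on $W_{\tau}$, the Chern character of the coherent sheaf $\iota_{\tau,*}L$ equals $\iota_{\tau,*}\bigl(\mathrm{ch}(L)\cdot \mathrm{td}(N_{W_{\tau}|K^3(A)})^{-1}\bigr)$, and comparing this with $\iota_{\tau,*}\mathcal{O}_{W_{\tau}}$ isolates $\iota_{\tau,*}(c_1(L))$ as an analytic class.

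The substance of the lemma is that these properties persist on every deformation. Let $\mathcal{K}\to B$ be any family with $\mathcal{K}_0=K^3(A)$. By Remark~\ref{rmk:deformations}, after a finite \'etale base change the submanifold $W_{\tau}$ deforms to a subfamily $\iota_{\tau}\colon \mathcal{W}_{\tau}\hookrightarrow\mathcal{K}$ of $\mathrm{K}3^{[2]}$-manifolds, and parallel transport of the divisor class $s'_{\tau,\alpha}$ produces a class $s'_b\in H^2(\mathcal{W}_{\tau,b},\ZZ)$ on each fibre. The key input is Lemma~\ref{lem:Hodge}, which guarantees that $s'_b$ remains a Hodge class on $\mathcal{W}_{\tau,b}$. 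Since $\mathcal{W}_{\tau,b}$ is compact K\"ahler and $s'_b$ is an integral class of type $(1,1)$, the Lefschetz~$(1,1)$ theorem shows that $s'_b$ is the first Chern class of a line bundle, hence analytic.

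It remains to transport these conclusions back to $\mathcal{K}_b$. Because the fibrewise fundamental classes and Gysin maps of the family $\mathcal{W}_{\tau}\subset\mathcal{K}$ vary in a locally constant fashion, the parallel transport of $d_{\tau,\alpha}$ to $H^6(\mathcal{K}_b,\ZZ)$ coincides with $\iota_{\tau,b,*}(s'_b)$. The Gysin map $\iota_{\tau,b,*}\colon H^2(\mathcal{W}_{\tau,b})\to H^6(\mathcal{K}_b)$ is a morphism of Hodge structures (Tate twists being ignored throughout), so it carries the Hodge class $s'_b$ to a Hodge class; by the same GRR argument as above it carries the analytic class $s'_b$ to an analytic class. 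Thus the parallel transport of $d_{\tau,\alpha}$ is simultaneously Hodge and analytic on every $\mathcal{K}_b$, which is exactly the assertion.

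I expect the only genuine difficulty to be the one already resolved in Lemma~\ref{lem:Hodge}: unlike $w_{\tau}$ and $v_{\tau,\tau'}$ in Lemma~\ref{lem:deformingW}, the class $s'_{\tau,\alpha}$ is not a canonical class on $W_{\tau}$ on its own and need not stay Hodge under an arbitrary deformation of $W_{\tau}$; it is precisely its orthogonality to $\iota_{\tau}^*H^2(K^3(A),\ZZ)$, hence to a symplectic form on the deformed fibre, that forces it to remain Hodge. Granting this, the remaining steps---Lefschetz~$(1,1)$ on a K\"ahler fibre and the preservation of analyticity under Gysin pushforward---are routine.
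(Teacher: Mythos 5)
Your proposal is correct and follows essentially the same route as the paper: identify the parallel transport of $d_{\tau,\alpha}$ with $\iota_{\tau,b,*}(s'_b)$ via the deformed subfamily $\mathcal{W}_{\tau}$ of Remark~\ref{rmk:deformations}, invoke Lemma~\ref{lem:Hodge} to keep $s'_b$ Hodge, apply Lefschetz~$(1,1)$ to realize it as $c_1(L_b)$, and conclude analyticity of the pushforward by Grothendieck--Riemann--Roch. The paper's proof is just a more condensed version of the same argument, recording the output of the GRR computation as $d_b=c_3(\iota_{\tau,*}(L_b))$.
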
 
	\begin{proof}
		Let $\mathcal{K}\to B$ be a deformation of $K^3(A)$, and consider the induced deformation~$\mathcal{W}_{\tau}\to B$ of $W_{\tau}$ as in Remark \ref{rmk:deformations}. A class $d_b\in H^6(\mathcal{K}_b,\QQ)$ obtained as parallel transport of some $d_{\tau,\alpha}$ is of the form $\iota_{\tau,b,*}(s'_b)$, where $s'_b\in H^2(\mathcal{W}_{\tau,b},\ZZ)$ is the parallel transport of $s'_{\tau,\alpha}$; here, $\iota_{\tau,b}\colon \mathcal{W}_{\tau,b} \hookrightarrow \mathcal{K}_b$ is the inclusion.
		By Lemma \ref{lem:Hodge}, $s'_b$ is a Hodge class. By Lefschetz (1,1) theorem we have $s'_b=c_1(L_b)$ for some line bundle $L_b$ on $\mathcal{W}_{\tau, b}$. A Grothendieck-Riemann-Roch computation (using that hyper-K\"ahler manifolds have trivial first Chern class) shows that $d_b=\frac{1}{2}c_3\left(\iota_{\tau,b,*}(L_b)\right)$; hence, $d_b$ is an analytic class on~$\mathcal{K}_b$. 
	\end{proof}
	
	The next statement thus implies that canonical Hodge classes of degree~$6$ are analytic, for any manifold of $\mathrm{Kum}^3$-type.
	\begin{proposition}\label{prop:canonicalH6}
		The classes $d_{\tau, \alpha}$ generate the subspace of canonical Hodge classes in $H^6(K^3(A), \QQ)$.
	\end{proposition}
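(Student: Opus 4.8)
The plan is to prove the statement by a dimension count, showing that the subspace spanned by the classes $d_{\tau,\alpha}$ already fills up the entire space of canonical Hodge classes in $H^6(K^3(A),\QQ)$. From \S\ref{subsec:HodgeCanonical} we know this target space has dimension $241$, and by Proposition \ref{prop:Ginvariants} the $G$-invariant part of the cohomology is $V_{(3)}\oplus V_{(1,1)}\oplus 16V$; the canonical Hodge classes in degree $6$ live in the LLV-summands $V_{(1,1)}$ (one class, up to scale) and $240\QQ$ (the full $240$-dimensional trivial summand). The first observation is that each $d_{\tau,\alpha}=\iota_{\tau,*}(s'_{\tau,\alpha})$ is visibly a canonical Hodge class by Lemma \ref{lem:deformD}, so the whole game is to show they span a $241$-dimensional space. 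Since there are $16$ values of $\tau$ and $16$ classes $s'_{\tau,\alpha}$ for each (indexed by the $A_2$-torsor $A_{2,\tau}$), we have $256$ candidate classes, which is more than enough numerically, so the content is entirely in computing their pairwise intersection numbers and verifying the rank.

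First I would reduce the span to the known $241$-dimensional space: the classes $d_{\tau,\alpha}$ are $G$-invariant (the group $G$ permutes the $s'_{\tau,\alpha}$ within each $W_\tau$ and the construction is $G$-equivariant), and being canonical of degree $6$ they must lie in the canonical Hodge classes of $V_{(1,1)}\oplus 240\QQ$, which is exactly $241$-dimensional. So it suffices to prove the $d_{\tau,\alpha}$ span a space of dimension $241$. To compute the rank, the natural tool is the intersection pairing against degree-$6$ canonical classes, equivalently computing $d_{\tau,\alpha}\cdot d_{\tau',\alpha'}\in H^{12}(K^3(A),\QQ)=\QQ$. By the projection formula, $d_{\tau,\alpha}\cdot d_{\tau',\alpha'}=\int_{K^3(A)}\iota_{\tau,*}(s'_{\tau,\alpha})\cdot\iota_{\tau',*}(s'_{\tau',\alpha'})$, which for $\tau=\tau'$ reduces to the self-intersection $\int_{W_\tau} s'_{\tau,\alpha}\cdot s'_{\tau,\alpha'}\cdot c_2(N_{W_\tau|K^3(A)})$, computable via Proposition \ref{prop:c2normal} together with the intersection data of Lemma \ref{lem:intersectionMatrix} and the orthogonality relations defining the $s'_{\tau,\alpha}$. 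For $\tau\neq\tau'$, the classes restrict to $V_{\tau,\tau'}$ and one uses Remark \ref{rmk:mixedIntersections} together with Proposition \ref{prop:geometricInput1} to evaluate the mixed terms; the restriction formulae $s_{\tau,\alpha}|_{V_{\tau,\tau'}}=r_{\tau,\tau',\bar\alpha}$ make these concrete.

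The main obstacle I expect is not any single computation but assembling the full Gram matrix of the $256$ classes and certifying that its rank is exactly $241$ — a $256\times 256$ integer matrix whose entries come from several different geometric regimes ($\tau=\tau'$ with $\alpha=\alpha'$, $\tau=\tau'$ with $\alpha\neq\alpha'$, and $\tau\neq\tau'$). The $G$- and $\Gamma$-symmetry (Proposition \ref{prop:groupAction}) should collapse this to a manageable number of distinct entries, since $\Gamma=A_4$ acts transitively on the $W_\tau$ and the whole configuration is highly homogeneous, reducing the rank computation to understanding a few characters of the group action. Concretely, I would organize the $256$ classes according to the $A_4$-action and decompose the span into isotypic pieces: the one-dimensional piece landing in $V_{(1,1)}$ should be detected by the ``diagonal'' combination $\sum_{\tau,\alpha}d_{\tau,\alpha}$ (matching the unique invariant class there), while the remaining $240$ dimensions must account for the trivial LLV-summand $240\QQ$. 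The final step is then to exhibit $241$ explicit linear combinations whose Gram determinant (or an appropriate minor) is nonzero, which I would verify by the kind of explicit numerical computation already used in Lemma \ref{lem:basis} and Proposition \ref{prop:c2normal}, delegating the final linear-algebra check to a computer.
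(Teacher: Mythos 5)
Your proposal is correct and follows essentially the same route as the paper: both arguments reduce to showing that the $256$ classes $d_{\tau,\alpha}$ span a space of dimension $241$ (the known dimension of the space of canonical Hodge classes in $H^6$), by computing the pairwise intersection numbers in the three regimes you identify, using Proposition \ref{prop:c2normal} and Lemma \ref{lem:intersectionMatrix} for the diagonal blocks and Remark \ref{rmk:mixedIntersections} together with Proposition \ref{prop:geometricInput1} for the mixed terms. The only differences are cosmetic: the paper avoids a brute-force rank computation by using the $\Aut_0(K^3(A))$-invariance of $G$-invariant degree-$6$ canonical classes (via Lemma \ref{lem:Gammainvariants} and Proposition \ref{prop:Ginvariants}) to deduce the identity $\sum_{\alpha}d_{\tau,\alpha}=\sum_{\alpha'}d_{\tau',\alpha'}$, which both exhibits the $15$-dimensional space of relations and pins down the cross terms $d_{\tau,\alpha}\cdot d_{\tau',\alpha'}=8$; also note that the individual $d_{\tau,\alpha}$ are permuted by $G$ rather than fixed, though your containment in the $241$-dimensional space already follows from canonicity alone.
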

	
	We will use the following intersection numbers to prove the above proposition.	
	\begin{proposition}\label{prop:intersectionsD}
		Fix $\tau\in A_{2}$. For $\alpha, \alpha'\in A_{2,\tau}$, we have $$d_{\tau, \alpha}\cdot d_{\tau, \alpha'} = \begin{cases} -52 & \text{ if } \alpha= \alpha';\\
			12 & \text{ if } \alpha\neq \alpha'.
		\end{cases} $$
	\end{proposition}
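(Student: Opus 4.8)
The plan is to push the computation down to the fourfold $W_\tau$ and evaluate it there using the ring structure $H^4(W_\tau,\QQ)=\mathrm{Sym}^2(H^2(W_\tau,\QQ))$ recalled in \S\ref{subsec:K3[2]}. Since $\iota_\tau\colon W_\tau\hookrightarrow K^3(A)$ is a regular embedding of codimension $2$ and $d_{\tau,\alpha}=\iota_{\tau,*}(s'_{\tau,\alpha})$ by Definition \ref{def:canonical3cycles}, the projection formula together with the self-intersection formula $\iota_\tau^*\iota_{\tau,*}(x)=x\cdot c_2(N_{W_\tau|K^3(A)})$ gives
\[
d_{\tau,\alpha}\cdot d_{\tau,\alpha'}=\int_{W_\tau}s'_{\tau,\alpha}\cdot s'_{\tau,\alpha'}\cdot c_2\bigl(N_{W_\tau|K^3(A)}\bigr)=\int_{W_\tau}s'_{\tau,\alpha}\cdot s'_{\tau,\alpha'}\cdot\iota_\tau^*(w_\tau),
\]
where the last equality uses the identity $\iota_\tau^*(w_\tau)=c_2(N_{W_\tau|K^3(A)})$ recorded in the proof of Proposition \ref{prop:c2normal}. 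Everything is thus reduced to a single degree-eight intersection number on $W_\tau$, for which Proposition \ref{prop:c2normal} supplies an explicit formula for $\iota_\tau^*(w_\tau)$.

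Next I would evaluate this integral as a pairing in $\mathrm{Sym}^2(H^2(W_\tau,\QQ))$ under the bilinear form \eqref{eq:intersectionForm}, so that the only inputs are Beauville-Bogomolov pairings. From Definition \ref{def:s'} and the orthogonality of the $s_{\tau,\alpha}$ and $\delta_\tau$ one finds $q_{W_\tau}(s'_{\tau,\alpha},s'_{\tau,\alpha'})=-2-32\,\delta_{\alpha\alpha'}$, i.e.\ $-34$ if $\alpha=\alpha'$ and $-2$ otherwise. It is convenient to use the expression for $\iota_\tau^*(w_\tau)$ obtained inside the proof of Proposition \ref{prop:c2normal}, namely a multiple of $\iota_\tau^*(\overline{q}_{K^3(A)})$ plus a combination of the index-symmetric products $\sum_{\alpha}s'_{\tau,\alpha}\cdot s'_{\tau,\alpha+\theta}$. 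The contribution of the $\iota_\tau^*(\overline{q}_{K^3(A)})$-term is clean: writing $\overline{q}_{K^3(A)}$ through an orthogonal basis (Remark \ref{rmk:BBclass}) and using $q_{W_\tau}(\iota_\tau^*\gamma,\iota_\tau^*\gamma)=2\,q_{K^3(A)}(\gamma,\gamma)$ from Lemma \ref{lem:restrictionBB}, formula \eqref{eq:intersectionForm} collapses---because each $s'_{\tau,\alpha}$ is $q_{W_\tau}$-orthogonal to $\iota_\tau^*(H^2(K^3(A),\QQ))$ by Lemma \ref{lem:Hodge}---to $q_{W_\tau}(s'_{\tau,\alpha},s'_{\tau,\alpha'})$ times $2\,b_2(K^3(A))=14$.

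The remaining terms reduce to pairings $(s'_{\tau,\alpha}\,s'_{\tau,\alpha'},\,s'_{\tau,\beta}\,s'_{\tau,\beta'})$, each expanded by \eqref{eq:intersectionForm} into products of the elementary values $-34$ and $-2$; summing over $\beta,\beta'$ amounts to counting index coincidences in a quadruple sum. Using the $A_2$-action on $W_\tau$ of Proposition \ref{prop:groupAction}.$(ii)$, which permutes the $s'_{\tau,\alpha}$ by $\alpha\mapsto\alpha+\theta$ while fixing the $G$-invariant class $\iota_\tau^*(w_\tau)$, the pairing depends only on the difference $\alpha-\alpha'$; the index-symmetric shape of $\iota_\tau^*(w_\tau)$ (it weights all nonzero $\theta$ equally) then collapses this to the two cases $\alpha=\alpha'$ and $\alpha\neq\alpha'$, yielding $-52$ and $12$ respectively. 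I expect the only genuine labour to be this bookkeeping of index coincidences in the quadruple sum; alternatively one may substitute $s'_{\tau,\alpha}=4s_{\tau,\alpha}-\delta_\tau$ and compute directly from the quadruple intersection numbers of the $s_{\tau,\alpha}$ and $\delta_\tau$ listed in the proof of Lemma \ref{lem:intersectionMatrix}, trading reliance on \eqref{eq:intersectionForm} for a longer but entirely mechanical expansion.
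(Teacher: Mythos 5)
Your proposal is correct and follows essentially the same route as the paper: reduce $d_{\tau,\alpha}\cdot d_{\tau,\alpha'}$ to $\int_{W_\tau}\iota_\tau^*(w_\tau)\cdot s'_{\tau,\alpha}\cdot s'_{\tau,\alpha'}$ via the projection and self-intersection formulae, then evaluate in $\mathrm{Sym}^2(H^2(W_\tau,\QQ))$ using the form \eqref{eq:intersectionForm}, the expression for $\iota_\tau^*(w_\tau)$ from Proposition \ref{prop:c2normal}, and the elementary pairings of the $s_{\tau,\alpha}$ and $\delta_\tau$. The intermediate values you record (e.g.\ $q_{W_\tau}(s'_{\tau,\alpha},s'_{\tau,\alpha'})=-2-32\,\delta_{\alpha\alpha'}$ and the factor $2\,b_2=14$ for the $\iota_\tau^*(\overline{q}_{K^3(A)})$-term) check out and the bookkeeping does yield $-52$ and $12$.
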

	\begin{proof}
		Since $c_2\left(N_{W_{\tau}|K^3(A)}\right) = \iota^*_{\tau}(w_{\tau})$, by \cite[Corollary 6.3]{fulton} we have 
		\[
		d_{\tau, \alpha}\cdot d_{\tau, \alpha'} = \int_{W_{\tau}} \iota^*_{\tau}(w_{\tau})\cdot s'_{\tau, \alpha} \cdot s'_{\tau, \alpha'},
		\]
		for any $\alpha,\alpha'\in A_{2,\tau}$. 
		It is straightforward to compute these numbers with the intersection form~\eqref{eq:intersectionForm}, using the expression for $\iota_{\tau}^*(w_{\tau})$ of Proposition \ref{prop:c2normal}, the definition of $s'_{\tau,\alpha}$ (Definition \ref{def:s'}) and the intersections determined in the proof of Lemma \ref{lem:intersectionMatrix}.
	\end{proof}
	
	\begin{proof}[{Proof of Proposition \ref{prop:canonicalH6}}]
		Fix $\tau\in A_2$. With the intersection numbers of Proposition~\ref{prop:intersectionsD}, an argument entirely analogous to that used in the proof of Proposition \ref{prop:canonicalH4} shows that the~$16$ classes $d_{\tau, \alpha}$, for $\alpha\in A_{2, \tau}$, are linearly independent. 
		The $G$-invariants in $\langle d_{\tau, \alpha}\rangle_{\alpha\in A_{2,\tau}}$ are the multiples of~$\sum_{\alpha} d_{\tau,\alpha}$ (cf. Remark \ref{rmk:actionDivisors}). This class is non-zero since~$(\sum_{\alpha} d_{\tau,\alpha}) \cdot (\sum_{\alpha} d_{\tau,\alpha}) = 2048$ by Proposition~\ref{prop:intersectionsD}.
		But, by Propositions \ref{lem:Gammainvariants} and \ref{prop:Ginvariants}, any $G$-invariant canonical Hodge class in $H^6(K^3(A),\QQ)$ is actually invariant under the action of the entire group $\Aut_0(K^3(A))$. Since translation by $\epsilon\in A_4$ maps~$d_{\tau,\alpha}$ to~$d_{\tau+2\epsilon, \alpha+\epsilon}$, we must have~$\sum_{\alpha\in A_{2,\tau}} d_{\tau, \alpha} = \sum_{\alpha'\in A_{2,\tau'}} d_{\tau',\alpha'}$, for any $\tau, \tau'\in A_2$. 
		
		Next, we observe that for $\tau\neq \tau'$ in $A_2$ the value $d_{\tau, \alpha}\cdot d_{\tau', \alpha'}$ is independent of $\alpha$ and $\alpha'$. 
		Indeed, by Definitions \ref{def:s'} and \ref{def:canonical3cycles}, 
		and Remark \ref{rmk:mixedIntersections}, the number in question equals 
		$$-4 (\iota_{\tau, *}(s_{\tau, \alpha}) \cdot \iota_{\tau',*}(\delta_{\tau'})+  \iota_{\tau,*}(\delta_{\tau})\cdot  \iota_{\tau', *}(s_{\tau', \alpha'})),$$ and using the symmetries in $G$ we see that the numbers $\iota_{\tau, *}(s_{\tau, \alpha})\cdot \iota_{\tau',*}(\delta_{\tau'})$ do not depend on~$\alpha\in A_{2,\tau}$.
		Since we have just shown that $\sum_{\alpha\in A_{2,\tau}} d_{\tau, \alpha} = \sum_{\alpha'\in A_{2,\tau'}} d_{\tau',\alpha'}$, Proposition~\ref{prop:intersectionsD} gives 
		\[
		\left(\sum_{\alpha\in A_{2,\tau}} d_{\tau, \alpha} \right)\cdot d_{\tau', \alpha'} =128,
		\]
		Therefore $d_{\tau, \alpha}\cdot d_{\tau' , \alpha'} = 8$ whenever $\tau\neq \tau'$.
		
		Assume now that a linear combination $u= \sum_{\tau\in A_2} \sum_{\alpha\in A_{2,\tau}} \beta_{\tau,\alpha}\cdot d_{\tau,\alpha}$ is zero. Taking the intersection product of $u$ with $(d_{\tau, \alpha} - d_{\tau, \alpha'})$ we obtain that $\beta_{\tau,\alpha}=\beta_{\tau, \alpha'}$ for all $\alpha, \alpha'\in A_{2,\tau}$. Thus we can write 
		$u= \sum_{\tau \in A_2} \gamma_{\tau} \sum_{\alpha\in A_{2,\tau}} d_{\tau,\alpha}$, and hence we must have $\sum_{\tau} \gamma_{\tau}=0$.
		This shows that the $\QQ$-vector space of linear relations between the $256$ classes $d_{\tau, \alpha}$ is generated by $\sum_{\alpha} d_{\tau, \alpha} - \sum_{\alpha'} d_{\tau' , \alpha'}, $ for $\tau, \tau'$ in $A_2$. This space has dimension $15$, and hence the $256$ classes $d_{\tau, \alpha}$ span a subspace of $H^6(K^3(A),\QQ)$ of dimension $241$, which therefore must be the whole $241$-dimensional space of canonical Hodge classes in $H^6(K^3(A),\QQ)$.
	\end{proof}

\section{Algebraic cycles on $\mathrm{Kum}^3$-varieties}\label{sec:7}

The work done so far proves the following.
\begin{theorem}\label{thm:HCcanonical}
	Let $K$ be any manifold of $\mathrm{Kum}^3$-type. All canonical Hodge classes on~$K$ are analytic.
\end{theorem}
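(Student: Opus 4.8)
The plan is to treat the canonical Hodge classes degree by degree, reducing everything to the two cases already settled for the generalized Kummer sixfold and then transporting analyticity across the deformation family. Recall that the canonical Hodge classes are exactly the trivial $\mathfrak{so}(H^2(K,\QQ))$-subrepresentations of the cohomology; reading off table \eqref{eq:LLVrefined}, these occur only in degrees $0,4,6,8$ and $12$, of respective dimensions $1,17,241,17$ and $1$. It therefore suffices to exhibit analytic generators in each of these five degrees. Degree $0$ is immediate, since the space is spanned by the unit class $1\in H^0(K,\QQ)$.

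For degrees $4$ and $6$ I would first invoke the results proven for $K=K^3(A)$: by Proposition \ref{prop:canonicalH4} the canonical Hodge classes in $H^4(K^3(A),\QQ)$ are spanned by $c_2$ and the fundamental classes $w_{\tau}$, and by Proposition \ref{prop:canonicalH6} those in $H^6(K^3(A),\QQ)$ are spanned by the classes $d_{\tau,\alpha}$. To pass to an arbitrary $K$ of $\mathrm{Kum}^3$-type, I use that $K$ is deformation equivalent to $K^3(A)$, so the two are joined by a chain of families of $\mathrm{Kum}^3$-manifolds. Parallel transport along such a family identifies the spaces of canonical Hodge classes of the fibres, since these are the $\mathfrak{so}(H^2)$-trivial parts and the LLV-structure depends only on the topology. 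Under this identification $c_2$ transports to the second Chern class of the fibre, while the $w_{\tau}$ and the $d_{\tau,\alpha}$ transport to analytic classes by Lemmas \ref{lem:deformingW} and \ref{lem:deformD}, whose proofs (through Lemma \ref{lem:Hodge}, Lefschetz $(1,1)$ and Grothendieck--Riemann--Roch) apply verbatim to any fibre of any such family. Hence the transported classes span the canonical Hodge classes of $K$ in degrees $4$ and $6$ by analytic ones.

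It remains to treat degrees $8$ and $12$, which I would deduce formally from degree $4$ on $K$ itself. The class $\overline{q}=\overline{q}_K$ lies in the degree-$4$ space, hence is analytic by the previous step. Since canonical Hodge classes form a subalgebra of $H^{\bullet}(K,\QQ)$, cup-product with $\overline{q}$ sends the $17$-dimensional degree-$4$ space into the $17$-dimensional degree-$8$ space; this map is injective by Remark \ref{rmk:productq}, which is an assertion about topological intersection numbers and hence valid for every $\mathrm{Kum}^3$-manifold, and is therefore an isomorphism. Thus every canonical Hodge class in $H^8(K,\QQ)$ has the form $\overline{q}\cdot\omega$ with $\omega$ canonical Hodge of degree $4$; both factors being analytic, so is the product. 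Finally, the one-dimensional space in degree $12$ is spanned by $\overline{q}^3$, which is analytic as a power of $\overline{q}$ and nonzero because its Fujiki constant $C(\overline{q}^3)=2772$ does not vanish.

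I expect the only genuine subtlety to lie in the transport step for degrees $4$ and $6$: one must ensure simultaneously that parallel transport carries the distinguished spanning sets on $K^3(A)$ onto the \emph{entire} space of canonical Hodge classes on $K$ and that it preserves analyticity. The former is guaranteed by the deformation-invariance of the LLV-decomposition, and the latter is precisely the content of Lemmas \ref{lem:deformingW} and \ref{lem:deformD}; once these are in place, the remaining degrees follow from the purely algebraic manipulation with $\overline{q}$ described above.
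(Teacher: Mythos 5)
Your proposal is correct and follows essentially the same route as the paper: Propositions \ref{prop:canonicalH4} and \ref{prop:canonicalH6} combined with Lemmas \ref{lem:deformingW} and \ref{lem:deformD} handle degrees $4$ and $6$ via parallel transport from $K^3(A)$, and Remark \ref{rmk:productq} transfers degree $4$ to degree $8$ by cup-product with $\overline{q}$. Your explicit treatment of degrees $0$ and $12$ is a harmless addition the paper leaves implicit.
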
 
\begin{proof}
	The assertion for canonical Hodge classes in $H^4(K,\QQ)$ (resp. in $H^6(K,\QQ)$) follows from Lemma \ref{lem:deformingW} and Proposition \ref{prop:canonicalH4} (resp. from Lemma \ref{lem:deformD} and Proposition~\ref{prop:canonicalH6}). By Remark \ref{rmk:productq}, the cup-product with $\overline{q}_K$ induces an isomorphism between canonical Hodge classes in $H^4(K,\QQ)$ and those in $H^8(K,\QQ)$, which therefore are analytic as well.
\end{proof}

We can now complete the proof of our main result.

\begin{proof}[Proof of Theorem \ref{thm:HCKum3}] 

Let $K$ be any projective manifold of $\mathrm{Kum}^3$-type, and consider the action of the group $G\subset \Aut_0(K)$ on the cohomology. By Proposition \ref{prop:Ginvariants}, we can write:
\[
H^{\bullet}(K,\QQ) = (H^{\bullet}(K,\QQ))^G \oplus H^{\mathrm{odd}}(K,\QQ) \oplus 240\QQ.
\]
The odd cohomology obviously does not contain Hodge classes. The last summand consists of canonical Hodge classes, which are all algebraic by Theorem \ref{thm:HCcanonical}. It remains to show that the Hodge classes in $(H^{\bullet}(K,\QQ))^G$ are algebraic. 

By Theorem \ref{thm:associatedK3}, we have a diagram
\[
\begin{tikzcd}
	& \tilde{K} \arrow{dr}{q} \arrow[swap]{dl}{b} \\
	K \arrow[dashed]{rr}{r} && Y_K,
\end{tikzcd}
\]
where $b\colon \tilde{K}\to K$ is the successive blow-up of $16$ smooth fourfolds, the action of $G$ extends to an action on $\tilde{K}$, and the quotient $q\colon \tilde{K}\to Y_K$ gives a variety of $\mathrm{K}3^{[3]}$-type. 
Moreover, $Y_K$ is birational to a smooth and projective moduli space $M$ of stable sheaves on the K3 surface $S_K$. The Hodge conjecture holds for any power of $S_K$ by \cite[Corollary 5.8]{floccariKum3}; by a result of B\"ulles~\cite{Bue18}, this implies that the Hodge conjecture holds for $M$ as well. Since birational hyper-K\"ahler varieties have isomorphic motives (\cite{Rie14}), it follows that the Hodge conjecture holds for $Y_K$.

The pull-back along $q$ identifies the cohomology of $Y_K$ with $H^{\bullet}(\tilde{K},\QQ)^G$, while the pull-back along $b$ embeds $G$-equivariantly the rational cohomology of $K$ into that of $\tilde{K}$. The left-inverse of $b^*$ is the push-forward $b_*$. Therefore, $r^*\coloneqq b_*q^*$ induces a surjection
$$r^*\colon H^{\bullet}(Y_K,\QQ) \to (H^{\bullet}(K,\QQ))^G.$$
Note that $r^*$ is induced by an algebraic correspondence. Since the Hodge conjecture holds for $Y_K$, any Hodge class in $(H^{\bullet}(K,\QQ))^G$ is algebraic. 
\end{proof}

\begin{proof}[Proof of Theorem \ref{thm:TCKum3}]
	By the works \cite{floccari2019}, \cite{soldatenkov19} and \cite{FFZ}, the Mumford-Tate conjecture holds for any hyper-K\"ahler variety of known deformation type; the final result may be found in \cite[Theorem 1.18]{FFZ}.
	See \cite[\S2.1]{moonen2017} for the statement of this conjecture. If $X/k$ is a smooth and projective variety over the finitely generated field $k\subset \CC$ for which the Mumford-Tate conjecture holds, then the Galois representation on the $\ell$-adic cohomology of $X$ is semisimple and the Hodge conjecture for $X_{\CC}$ is equivalent to the Tate conjecture for $X/k$. Therefore, Theorem \ref{thm:TCKum3} follows from Theorem \ref{thm:HCKum3}.
\end{proof}

\bibliographystyle{smfplain} 
\bibliography{bibliographyNoURL}{}

\end{document}